\title{Thresholds for contagious sets in random graphs}
\author{Omer Angel \and Brett Kolesnik}
\date{}
\theoremstyle{plain}
\newtheorem{thm}{Theorem}[section]
\newtheorem*{thm*}{Theorem}
\newtheorem{lemma}[thm]{Lemma}
\newtheorem{lem}[thm]{Lemma}
\newtheorem{claim}[thm]{Claim}
\newtheorem{subclaim}[thm]{Sub-claim}
\newtheorem{prop}[thm]{Proposition}
\theoremstyle{definition}
\newtheorem{defn}[thm]{Definition}
\theoremstyle{remark}
\numberwithin{equation}{section}
\newcommand{\eps}{\varepsilon}
\renewcommand{\P}{{\mathbb P}}
\newcommand{\N}{{\mathbb N}}
\renewcommand{\l}{\langle}
\renewcommand{\r}{\rangle}
\newcommand{\G}{{\cal G}}
\newcommand{\Gnp}{{\cal G}_{n,p}}
\newcommand{\NN}{{\mathbb N}}
\DeclareMathOperator{\Poi}{Poi}
\begin{document}

\maketitle

\begin{abstract}
  For fixed $r\geq 2$, we consider bootstrap percolation with threshold
  $r$ on the Erd{\H{o}}s-R{\'e}nyi graph ${\cal G}_{n,p}$.  We identify
  a threshold for $p$ above which there is with high probability a set
  of size $r$ which can infect the entire graph.  This improves a result
  of Feige, Krivelevich and Reichman, which gives bounds for this
  threshold, up to multiplicative constants.

  As an application of our results, we also obtain an upper bound for
  the threshold for $K_4$-bootstrap percolation on ${\cal G}_{n,p}$,
  as studied by Balogh, Bollob{\'a}s and Morris.  We conjecture 
  that our
  bound is asymptotically sharp.

  These thresholds are closely related to the survival probabilities of
  certain time-varying branching processes, and we derive asymptotic
  formulae for these survival probabilities which are of interest in
  their own right.
\end{abstract}

\section{Introduction}

\subsection{Bootstrap percolation}

The \emph{$r$-bootstrap percolation process} on a graph $G=(V,E)$
evolves as follows.  Initially, some set $V_0 \subset V$ is
\emph{infected}.  Subsequently, any vertex that has at least $r$
infected neighbours becomes infected, and remains infected.  Formally
the process is defined by
\[
  V_{t+1} = V_t \cup \Big\{v : |N(v) \cap V_t| \geq r \Big\},
\]
where $N(v)$ is the set of neighbours of a vertex $v$.  The sets $V_t$
are increasing, and so converge to some set $V_\infty$ of eventually
infected vertices.  We denote the infected set by
$\l V_0,G \r_r = V_\infty$.  A \emph{contagious set} for $G$ is a set
$I\subset V$ such that if we put $V_0=I$ then we have that
$\l I,G \r_r = V$, that is, the infection of $I$ results in the
infection of all vertices of $G$.

Bootstrap percolation was introduced by Chalupa, Leath and
Reich~\cite{CLR79}, in the context of statistical physics, for the study
of disordered magnetic systems.  Since then it has been applied
diversely in physics, and in other areas, including computer science,
neural networks, and sociology, see \cite{A91,AL03,A10, GLBD05,DR09,
  FLP10,FLLPS04,FSS02,F89, GST11,G78, FWGH02,M11, TE09, vE87,W02,W86} and
further references therein.

Special cases of $r$-bootstrap percolation have been analyzed
extensively on finite grids and infinite lattices, see for instance
\cite{AL88,BBDCM12,BBM09b,BBM10,BP97, CM02,CC99,GHM12,H03,HLR04,S92}
(and references therein).  Other special graphs of interest have also
been studied, including hypercubes and trees, see
\cite{BB06,BBM09a,BPP06,FS08}.  Recent work has focused on the case of
random graphs, see for example \cite{A10,AF12,BP07,J09}, and in
particular, on the Erd\H{o}s-R\'enyi random graph $\Gnp$.  See
\cite{JLTV12,V07} (and \cite{BB05,BB09,ST85} for related results).

The main questions of interest in this field revolve around the size of
the eventual infected set $V_\infty$.  In most works, the object of
study is the probability that a \emph{random} initial set is contagious,
and its dependence on the size of $V_0$.  For example, in
\cite[Theorem~3.1]{JLTV12}, the critical size for a random contagious
set in $\Gnp$ is identified for all $r\ge2$ and $p$ in a range depending
on $r$.

More recently, and in contrast with the above results, Feige,
Krivelevich and Reichman~\cite{FKR16} study the \emph{minimal size} of a
contagious set in $\Gnp$.  We define a \emph{minimal} contagious set to
be a contagious set of size $r$.  This is the minimal possible size of a
contagious set.  We call a graph \emph{susceptible} 
(or say that it \emph{$r$-percolates}) if it contains such
a minimal contagious set.  A main result, \cite[Theorem~1.2]{FKR16},
gives the approximate threshold for $p$ above which $\Gnp$ is likely to
be susceptible.  Our main result identifies sharp thresholds for
the susceptibility of $\Gnp$, for all $r\ge2$.

Let $p_c(n,r)$ denote the infimum over $p>0$ so that $\Gnp$
is susceptible with probability at least $1/2$. 

\begin{thm}\label{T_main}
  Fix $r\ge2$ and $\alpha>0$.  Let 
  \[
    p=p(n)=\left(\frac{\alpha}{n\log^{r-1}n}\right)^{1/r}
  \]
  and denote
  \[
    \alpha_r=(r-1)!\left(\frac{r-1}{r}\right)^{2(r-1)}.
  \]
  If $\alpha>\alpha_r$, then with high probability $\Gnp$ is
  susceptible.
  If $\alpha<\alpha_r$, then there exists
  $\beta=\beta(\alpha,r)$ so that for $G=\Gnp$, with high probability
  for every $I$ of size $r$ we have $|\l I,G \r_r| \leq \beta\log n$.
  In particular, as $n\to\infty$, 
  \[
  p_c(n,r)=\left(\frac{\alpha_r}{n\log^{r-1}n}\right)^{1/r}(1+o(1)).
  \]
\end{thm}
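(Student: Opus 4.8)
The plan is to analyze the infection process started from a fixed contagious seed $I$ of size $r$ via a branching-process approximation, and then apply a first/second moment argument over the $\binom{n}{r}$ choices of $I$. First I would fix a seed $I=\{v_1,\dots,v_r\}$ and explore the set of infected vertices in a breadth-first manner. At a stage where $k$ vertices are already infected, a not-yet-infected vertex $v$ becomes newly infected precisely when it has $\ge r$ neighbours among the infected set; since each edge is present independently with probability $p$, the number of such new vertices is approximately $\mathrm{Binomial}\bigl(n,\P(\mathrm{Bin}(k,p)\ge r)\bigr)$. With $p$ as in the theorem we have $np^r \asymp 1/\log^{r-1}n$, so $\P(\mathrm{Bin}(k,p)\ge r)\approx \binom{k}{r}p^r$ as long as $k=o(1/p)$, and the expected number of children of a size-$k$ infected set is $\lambda_k := n\binom{k}{r}p^r \approx \frac{\alpha}{(r-1)!}\,(k/\log n)^{r-1}$. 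Thus the infection process, reindexed by the current size $k$, behaves like a time-inhomogeneous branching process whose per-capita offspring mean grows polynomially in $k/\log n$; the seed is contagious iff this process reaches size $n$ rather than dying out. This is exactly the time-varying branching process alluded to in the abstract, whose survival probability $q=q(\alpha,r)$ (asymptotically) the paper computes.

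Next I would quantify the survival probability. Because each already-infected vertex contributes $\lambda_k/k \to 0$ offspring while $k=O(\log n)$, the process overwhelmingly dies while still of logarithmic size unless it gets a lucky early burst; the survival probability is some constant $q=q(\alpha,r)\in(0,1)$ when $\alpha>\alpha_r$ and $q=0$ (in the sense that the process dies at size $O(\log n)$ whp) when $\alpha<\alpha_r$. The critical value $\alpha_r=(r-1)!\bigl(\tfrac{r-1}{r}\bigr)^{2(r-1)}$ should emerge from a variational/optimization computation: survival past the logarithmic regime requires the process to grow, i.e. $\lambda_k\gtrsim k$; writing $k=x\log n$ the requirement $\frac{\alpha}{(r-1)!}x^{r-1}\ge x$ together with the entropy cost of the process reaching level $x\log n$ against the drift leads to a threshold at $\alpha=\alpha_r$, with the factor $(r-1)/r$ arising from optimizing the trade-off between the polynomial growth exponent $r-1$ and the linear death term. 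I would import the asymptotic survival-probability formula proved earlier in the paper for this branching process rather than rederive it.

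For the supercritical half ($\alpha>\alpha_r$): let $N$ be the number of contagious seeds $I$ of size $r$. By the branching approximation, $\EE N \approx \binom{n}{r} q \asymp n^r$, so $\EE N\to\infty$. For the second moment I would couple the explorations from two distinct seeds $I,I'$: either their infection trees are essentially disjoint (giving independence and $\EE[N(N-1)]\approx (\EE N)^2$), or they overlap, but overlapping contributes a lower-order term because two logarithmic-size explorations meeting is a rare event when $p$ is this small; hence $\mathrm{Var}(N)=o((\EE N)^2)$ and Chebyshev gives $N>0$ whp, i.e. $\Gnp$ is susceptible whp. One must also check that, once an exploration survives past $C\log n$ vertices, it actually engulfs all of $[n]$ whp: past that point $\lambda_k/k$ starts to grow, the process is comfortably supercritical, and a standard argument shows it sweeps up a positive fraction and then all remaining vertices (using that an uninfected vertex among $\Theta(n)$ infected ones has $\ge r$ infected neighbours with probability $\to 1$). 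For the subcritical half ($\alpha<\alpha_r$): here I would run the exploration from \emph{every} seed $I$ and show via a union bound that whp none survives past $\beta\log n$ vertices. The dominant-term estimate gives, for a single seed, $\P(|\l I,G\r_r|\ge \beta\log n)\le n^{-r-\delta}$ for $\beta$ chosen suitably in terms of $\alpha_r-\alpha$ (this is exactly where the gap $\alpha<\alpha_r$ buys a power of $n$), and summing over the $\binom{n}{r}$ seeds still gives $o(1)$. This yields the stated bound $|\l I,G\r_r|\le\beta\log n$ for all $I$ whp, and in particular non-susceptibility, pinning down $p_c(n,r)$ as claimed.

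The main obstacle is the second-moment / correlation control in the supercritical case: one must show that the events "$I$ is contagious'' for different $r$-sets $I$ are asymptotically uncorrelated despite all of them living on the same random graph $\Gnp$. The clean way is a sprinkling or two-round exposure argument — reveal a first batch of edges to guarantee some seed survives to size $C\log n$, then reveal fresh edges to propagate that infection to all of $[n]$ — which sidesteps delicate joint-exploration estimates; alternatively, a direct pairwise-exploration coupling showing overlaps are negligible. A secondary technical point is making the branching-process comparison rigorous uniformly over the regime $k$ up to $\Theta(\log n)$ (controlling the error in $\P(\mathrm{Bin}(k,p)\ge r)\approx\binom{k}{r}p^r$ and the depletion of the vertex pool), but since $k=O(\log n)=o(1/p)$ throughout this regime, these corrections are genuinely lower order.
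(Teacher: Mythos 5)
Your skeleton (first moment for $\alpha<\alpha_r$, second moment plus sprinkling for $\alpha>\alpha_r$, threshold located via a time-inhomogeneous branching process) is the paper's strategy, but two of your central quantitative claims are wrong or unsupported, and they sit exactly where the paper has to work hardest.

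First, the survival probability of the relevant branching process is \emph{not} a constant $q\in(0,1)$. The offspring mean of the $k$th individual is $\binom{k}{r-1}\eps$ with $\eps=np^r=\alpha/\log^{r-1}n\to0$, so for \emph{every} $\alpha>0$ the process survives with positive probability, but that probability is $\exp[-\tfrac{(r-1)^2}{r}k_r(1+o(1))]$ with $k_r\asymp\log n$ (Theorem~\ref{T_BP}), i.e.\ polynomially small in $n$. Hence $\EE N\asymp n^{r-(r-1)^2\beta_r(\alpha)/r+o(1)}$, not $n^r$, and $\alpha_r$ is exactly where this exponent changes sign --- it is not a threshold for supercriticality of the branching process itself. (Under your premise ``$\EE N\asymp n^r$'' there would be no threshold at all.) Your later remark about an ``entropy cost'' of reaching level $x\log n$ is the correct picture and contradicts the ``$q$ constant'' claim; pinning that cost down sharply on both sides is the technical core of the paper --- the recursion \eqref{E_mrec} for minimally susceptible graphs, the eigenvector bound of Claim~\ref{C:induct} for the upper bound, and the Perron-eigenvalue computation of Lemma~\ref{L_hat:mLB} for the matching lower bound --- so ``importing'' Theorem~\ref{T_BP} is not an independent shortcut: in the paper that theorem is itself a consequence of these estimates.

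Second, and more seriously, the assertion that overlapping explorations ``contribute a lower-order term'' fails for the naive second moment. If two minimally susceptible subgraphs of size $k$ share $q$ vertices, the shared vertices can span on the order of $rq$ edges, so conditioning on the first percolation boosts the probability of the second by up to $p^{-rq}$, while forcing the overlap costs only about $(k^2/n)^q$; the net factor $(k^2/(np^{r}))^q\asymp(k^2\log^{r-1}n/\alpha)^q$ is \emph{large}, so the overlap terms do not wash out. This is precisely why the paper restricts to percolations through triangle-free subgraphs: Mantel's theorem caps the edges inside a $q$-vertex overlap at $q^2/4$, which is what makes Lemma~\ref{L_corr} work, and one must then show separately (Lemmas~\ref{L_hat:mLB} and~\ref{L_hat:mkiLB}) that the triangle-free restriction costs only $e^{o(k)}$. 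Your fallback --- sprinkling --- does not sidestep this: sprinkling grows an already-found set of size $C\log n$ to all of $[n]$, but establishing that \emph{some} seed reaches size $C\log n$ in the first round is itself the second-moment statement whose correlation estimate you have not supplied. A smaller omission: in the subcritical case the union bound must also rule out the process jumping over the window $[(\beta_*+\delta)\log n,\,\beta_r\log n]$ in a single step with a large top level; this is handled by Claim~\ref{C:Epmax}.
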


Thus $r$-bootstrap percolation undergoes a sharp transition.  For small
$p$ sets of size $r$ infect at most $O(\log n)$ vertices, whereas for
larger $p$ there are minimal contagious sets.  

We remark that for $\alpha<\alpha_r$, with high probability $\Gnp$
has susceptible subgraphs of size $\Theta(\log n)$.  Moreover, our
methods identify the largest $\beta$ so that there are susceptible
subgraphs of size $\beta\log n$ (see Proposition~\ref{P_SC} below).

\subsection{Graph bootstrap percolation and seeds}\label{S_seeds}

Let $H$ be some finite graph.  Following Bollob\'as~\cite{B67},
\emph{$H$-bootstrap percolation} is a rule for adding edges to a graph
$G$.  Eventually no further edges can be added, and the process
terminates.  An edge is added whenever its addition creates a copy of
$H$ within $G$.  Informally, the process completes all copies of $H$
that are missing a single edge.  Formally, we let $G_0=G$, and $G_{i+1}$
is $G_i$ together with every edge whose addition creates a subgraph
which is isomorphic to $H$.  Note that these are not necessarily induced
subgraphs, so having more edges in $G$ can only increase the final
result.  The vertex set is fixed, and no vertices play any special role.

For a finite graph $G$, this procedure terminates once
$G_{\tau+1}=G_{\tau}$, for some $\tau=\tau(G)$.  We denote the resulting
graph $G_\tau$ by $\l G\r_H$.  If $\l G\r_H$ is the complete graph on
the vertex set $V$, the graph $G$ is said to \emph{$H$-percolate} (or 
that it is $H$-percolating).
The case $H=K_4$ is the minimal case of interest.  Indeed, \emph{all}
graphs $K_2$-percolate, and a graph $K_3$-percolates if and only if it
is connected.  Hence by a classical result of Erd{\H{o}}s and R{\'e}nyi
\cite{ER59}, $\Gnp$ will $K_3$-percolate precisely for $p> n^{-1}\log{n}
+ \Theta(n^{-1})$. 

The main focus of \cite{BBM12} is $H$-bootstrap percolation in the case
that $G=\Gnp$ and $H=K_k$, for some $k\ge4$.
The critical thresholds are defined as
\[
  p_c(n,H) = \inf\left\{p>0 : \P(\l\Gnp\r_H = K_n) \ge 1/2 \right\}.
\]
It is expected that this property has a sharp threshold for
$H=K_k$ for any $k$, in the sense that for some $p_c=p_c(k)$ we have
that $\Gnp$ is $K_k$-percolating with high probability for
$p>(1+\delta)p_c$ and is $K_k$-percolating with probability tending to
$0$ for $p=(1-\delta)p_c$.  

Some bounds on $p_c(n,K_k)$, $k\ge4$, are obtained in \cite{BBM12}.  One
of the main results of \cite{BBM12} is that
$p_c(n,K_4) = \Theta(1/\sqrt{n\log{n}})$.  We improve the upper bound on
$p_c(n,K_4)$ given in \cite{BBM12}.  We believe that the bound below is
asymptotically sharp.

\begin{thm}\label{T_K4}
  Let $p=\sqrt{\alpha/(n\log{n})}$.  If $\alpha>1/3$
  then $\Gnp$ is $K_4$-percolating with high probability.  In particular
  as $n\to\infty$, we have that
  \[
    p_c(n,K_4) \le \frac{1+o(1)}{\sqrt{3n\log{n}}}.
  \]
\end{thm}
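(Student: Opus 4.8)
The plan is to reduce $K_4$-bootstrap percolation to ordinary ($2$-)bootstrap percolation seeded at a triangle, and then to apply (a minor extension of) the branching-process analysis behind Theorem~\ref{T_main}. The deterministic input is an \emph{absorption} property of the $K_4$-process: if at some stage the current graph contains a clique $C$ with $|C|\ge 3$ and $z$ is a vertex with at least two neighbours $a,b\in C$, then for every $c\in C\setminus\{a,b\}$ the $4$-set $\{z,a,b,c\}$ is a copy of $K_4$ missing only the edge $zc$, so $zc$ is added; hence $C\cup\{z\}$ becomes a clique. Starting from a triangle $T$ that is present in $G$ (a clique of size $3$) and absorbing the vertices of $\langle T,G\rangle_2$ one at a time in their $2$-bootstrap infection order, we find that $\langle G\rangle_{K_4}$ contains the complete graph on $\langle T,G\rangle_2$. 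Thus it suffices to show that, for $\alpha>1/3$ and $p=\sqrt{\alpha/(n\log n)}$, with high probability $\Gnp$ contains a triangle $T$ with $\langle T,G\rangle_2=V$. (It is essential to seed from a triangle rather than a single edge, as a clique of size $2$ does not grow; also, by an explosion estimate as in the proof of Theorem~\ref{T_main} and Proposition~\ref{P_SC} --- a clique of size $k$ absorbs about $\tfrac12 nk^2p^2\approx \tfrac{\alpha}{2\log n}k^2$ vertices in one round, exceeding $k$ once $k\gtrsim (2/\alpha)\log n$ --- it would be enough to find a triangle whose $2$-closure merely reaches size $\beta\log n$ for a suitable $\beta=\beta(\alpha)$, since past the critical window the closure is all of $V$ with high probability.)

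For the first moment: the edges inside the seed play no role in the $2$-bootstrap dynamics, so for a fixed $3$-set the events ``$\{a,b,c\}$ spans a triangle'' and ``$\{a,b,c\}$ is a $2$-bootstrap-contagious set'' are independent, and therefore
\[
  \EE\big[\#\{\text{contagious triangles in }\Gnp\}\big]=\binom n3 p^3\,\P\big(\{a,b,c\}\text{ is }2\text{-contagious}\big).
\]
The probability on the right is, up to a factor $n^{o(1)}$, the probability that the time-varying branching process governing the $2$-bootstrap spread from $3$ seed vertices survives the $O(\log n)$ critical window; this is precisely the object analysed for an $r$-vertex seed in the proof of Theorem~\ref{T_main}, and the same analysis (now with $3$ seed individuals) should give $\P(\{a,b,c\}\text{ is }2\text{-contagious})=n^{-h(\alpha)+o(1)}$ for an explicit continuous and strictly decreasing $h$ with $h(1/3)=3/2$. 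Since $\binom n3 p^3 = n^{3/2-o(1)}$, the expected number of contagious triangles is $n^{3/2-h(\alpha)+o(1)}$, which tends to infinity exactly when $\alpha>1/3$. (This parallels the role of $\alpha_r$ in Theorem~\ref{T_main}: there the $\binom nr\approx n^r$ budget of $r$-sets pins the threshold at the point where survival is $\approx n^{-r}$, and here the $\binom n3p^3\approx n^{3/2}$ budget of triangles pins it where survival is $\approx n^{-3/2}$.)

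To turn this into a high-probability statement I would run a standard second moment argument on $X:=\#\{\text{contagious triangles}\}$. The event that a given triangle $T$ is contagious is witnessed by a sparse substructure of $O(\log n)$ edges near $T$ (the part of the cascade that carries it through the critical window), so for two vertex-disjoint triangles the witnesses are with high probability edge-disjoint and the two events are asymptotically independent; hence disjoint pairs contribute $(1+o(1))\EE[X]^2$ to $\EE[X^2]$, while the $O(n^5)$ pairs of triangles sharing a vertex contribute only a lower-order term, since a shared seed vertex conveys only $O(1)$ edges of information and thus boosts the joint probability by at most a constant factor. Chebyshev's inequality then gives $\P(X>0)\to1$, so by the reduction $\Gnp$ is $K_4$-percolating with high probability whenever $\alpha>1/3$; letting $\alpha\downarrow1/3$ yields $p_c(n,K_4)\le(1+o(1))/\sqrt{3n\log n}$.

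I expect the main obstacle to be the first-moment step: extracting the precise constant $1/3$ requires the refined time-varying branching-process computation --- including careful bookkeeping of which edges at each generation have already been ``used up'' and a check that the critical window is reached while the explored neighbourhood is still essentially tree-like --- carried out exactly as in the core of the proof of Theorem~\ref{T_main}, but for a $3$-vertex (triangle) seed rather than an $r$-vertex seed. The witness-structure description needed for the second-moment step in the previous paragraph should come out of the same analysis.
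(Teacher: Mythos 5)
Your proposal is essentially the paper's own route. There, Theorem~\ref{T_K4} follows from Lemma~\ref{L:seed_suffice} (your absorption argument, run from a \emph{seed edge} rather than a triangle --- the first doubly-connected vertex supplies the triangle, and the two formulations describe the same event: each edge of a contagious triangle is a seed edge, since the third vertex is a common neighbour of the other two, and conversely a seed edge together with its first infected vertex is a contagious triangle) combined with Theorem~\ref{T_SP}, which is the case $r=2$, $\ell=1$ of Theorem~\ref{T_pcHr}; your budget-times-survival count $\binom n3p^3\cdot n^{-1/(2\alpha)+o(1)}=n^{3/2-1/(2\alpha)+o(1)}$ correctly reproduces $\alpha_{2,1}=1/3$. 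The one step you treat as routine that is not: the assertion that vertex-disjoint contagious triangles have asymptotically independent witnesses, and that a shared seed vertex costs only a constant factor, is exactly the ``main difficulty'' the paper flags --- it is resolved not by generic sparsity of the $O(\log n)$-edge witnesses but by restricting to triangle-free cascades ($\hat r$-percolation) and invoking Mantel's theorem to control the case of two cascades that overlap in many vertices (Lemmas~\ref{L_corr} and~\ref{L_hat:cr2mm}); provided you import that machinery from the proof of Theorem~\ref{T_main}, as you indicate, the argument is sound.
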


One way for a graph $G$ to $K_{r+2}$-percolate is if there is some
ordering of the vertices so that vertices $1\dots,r$ form a clique, and
every later vertex is connected to at least $r$ of the previous vertices
according to the order.  In this case we call the clique formed by the
first $r$ vertices a \emph{seed} for $G$.  When $r=2$, the seed is a
clique of size $2$, so we call it a \emph{seed edge}.

\begin{lem}\label{L:seed_suffice}
  Fix $r\geq 2$. If $G$ has a seed for $K_{r+2}$-bootstrap percolation,
  then $\l G \r_{K_{r+2}} = K_n$.
\end{lem}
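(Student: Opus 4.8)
The plan is to fix an ordering of $V=\{1,\dots,n\}$ witnessing the seed, so that $\{1,\dots,r\}$ spans a clique in $G$ and every $j>r$ has at least $r$ neighbours among $\{1,\dots,j-1\}$, and then to reason directly about the closure $G^*:=\l G\r_{K_{r+2}}$. Recall that $G^*$ is independent of the order in which edges are added, contains $G$, and is \emph{closed}, in the sense that whenever the addition of an edge $e$ to $G^*$ would create a copy of $K_{r+2}$, in fact $e\in G^*$. It suffices to show $G^*=K_n$.

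I would prove this by induction on $m$, for $r\le m\le n$, of the statement $P(m)$: the vertices $\{1,\dots,m\}$ span a complete graph in $G^*$. The base cases $P(r)$ and (when $n>r$) $P(r+1)$ follow at once from the seed property: $\{1,\dots,r\}$ is already a clique in $G\subseteq G^*$, and vertex $r+1$, having at least $r$ neighbours among the $r$ vertices $\{1,\dots,r\}$, is adjacent to all of them, so $\{1,\dots,r+1\}$ is a clique in $G$ as well.

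For the inductive step, assume $P(m)$ with $r+1\le m\le n-1$, and suppose for contradiction that some $u\in\{1,\dots,m\}$ is not adjacent to $m+1$ in $G^*$. By the seed property, $m+1$ has a set $N_0\subseteq\{1,\dots,m\}$ of at least $r$ neighbours in $G$; as $u\notin N_0$, choose any $S\subseteq N_0$ with $|S|=r$. Then $\{u,m+1\}\cup S$ is a set of $r+2$ vertices in which, within $G^*$, every edge except possibly $\{u,m+1\}$ is present: the edges inside $S\cup\{u\}\subseteq\{1,\dots,m\}$ are present by $P(m)$, and the edges from $m+1$ to $S$ are present since $S\subseteq N_0$ and $G\subseteq G^*$. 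Hence adding $\{u,m+1\}$ would complete a copy of $K_{r+2}$, so by closedness $\{u,m+1\}\in G^*$, contradicting the choice of $u$. Thus $m+1$ is adjacent in $G^*$ to all of $\{1,\dots,m\}$, which together with $P(m)$ gives $P(m+1)$. Taking $m=n$ yields $G^*=K_n$, as claimed.

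The argument is essentially routine. The two points deserving care are: one must start the inductive step at $m\ge r+1$ (so that $\{1,\dots,m\}$ can accommodate the $r+1$ distinct vertices $\{u\}\cup S$), which is why $P(r+1)$ is handled as a separate base case; and one should note that it is legitimate to argue about the closure $G^*$ directly rather than following the process $G=G_0\subseteq G_1\subseteq\cdots$ edge by edge, which is justified by the monotonicity and order-independence of $H$-bootstrap percolation recorded earlier in the text.
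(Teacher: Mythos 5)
Your proof is correct and follows essentially the same route as the paper's: induction along the seed ordering, using that the first $m$ vertices already span a clique together with the $r$ earlier neighbours of vertex $m+1$ to complete all edges at $m+1$. The only difference is presentational — you argue directly in the closure $\l G\r_{K_{r+2}}$ and spell out the base case $P(r+1)$, whereas the paper phrases the induction as "the induced subgraph on the first $k$ vertices percolates" — but the underlying argument is identical.
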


\begin{proof}
  We prove by induction that for $k\geq r$ the subgraph induced by the
  first $k$ vertices percolates.  For $k=r$, the definition of
  a seed implies that the subgraph is complete.  Given that the first
  $k-1$ vertices span a percolating graph, some number of steps will add
  all edges among them.  Finally, vertex $k$ has $r$ neighbours among
  these, and so every edge between vertex $k$ and a previous vertex can
  also be added by $K_{r+2}$-bootstrap percolation.
\end{proof}

In light of this, Theorem~\ref{T_K4} above is a direct corollary of the
following result.

\begin{thm}\label{T_SP}
  Let $p=\sqrt{\alpha/(n\log{n})}$.  As $n\to\infty$, the probability
  that $\Gnp$ has a seed edge tends to $1$ if $\alpha > 1/3$ and tends
  to $0$ if $\alpha<1/3$.
\end{thm}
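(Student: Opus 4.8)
The plan is to reduce the statement to the survival probability of a time-varying branching process, and to analyse that process by a change of measure. First I would record the elementary observation that $\{u,v\}$ is a seed edge if and only if $uv\in E$ and $\l\{u,v\},G\r_2 = V$ (order the infected vertices by their infection time), and that the latter event depends only on $G$ with the potential edge $uv$ deleted, since the $2$-bootstrap process never queries an edge incident to an already-infected vertex. Hence
\[
  \EE[\#\text{seed edges}] = \binom n2\, p\, q_n, \qquad q_n := \P\big(\l\{u,v\},\Gnp\r_2 = V\big),
\]
and since $\binom n2\, p = n^{3/2+o(1)}$ the theorem reduces to first and second moment arguments once we establish $q_n = n^{-1/(2\alpha)+o(1)}$; the exponent $1/(2\alpha)$ is the same one that, paired with the $n^2$ choices of an unordered pair in place of the $n^{3/2}$ choices of an edge, produces the threshold $\alpha_2=1/4$ for contagious \emph{pairs} in Theorem~\ref{T_main}.

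To pin down $q_n$ I would run the infection from $\{u,v\}$ as an exploration process, maintaining the infected set $V$, the active set $S$ of vertices with exactly one infected neighbour, and the dormant set $D$, revealing edges only when a vertex is infected: processing a vertex $w$ reveals its edges to $S$ (each such neighbour gains a second infected neighbour and is infected, then enqueued) and its edges to $D$ (each such neighbour moves into $S$). Since every infection adds $\Poi((1+o(1))np)$ vertices to $S$, after $j$ infections $|S|=(1+o(1))jnp$ provided $j=o(1/p)$, and so the $j$-th processed vertex has offspring distribution $\approx\Poi(jnp^2)=\Poi(j\alpha/\log n)$. Writing $\lambda_j=j\alpha/\log n$, $X_j\sim\Poi(\lambda_j)$ independent, and $W_i=\sum_{j\le i}(X_j-1)$, the infection percolates if and only if, up to negligible corrections and a routine ``mopping up'' step, $W_i\ge -O(1)$ for every $i$ up to $j^*:=\alpha^{-1}\log n$: beyond $j^*$ the drift $\lambda_i-1$ is positive, the walk blows up, a linear fraction of $\Gnp$ becomes infected, and then every remaining vertex has $\Poi((1+o(1))c\sqrt{\alpha n/\log n})$ infected neighbours and is infected as well.

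The heart of the matter is that this survival probability equals $\exp(-(1+o(1))j^*/2)=n^{-1/(2\alpha)+o(1)}$, which is the branching-process asymptotic of independent interest. For both bounds I would pass to the change of measure $Q$ under which $X_j\sim\Poi(1)$ for $j\le j^*$, so that under $Q$ the walk $W$ is centred and stays above $-O(1)$ for $j^*$ steps with probability $n^{-o(1)}$ (a standard random-walk persistence estimate). The log-likelihood ratio is $\sum_{j\le j^*}\big((1-\lambda_j)+X_j\log\lambda_j\big)$, and under $Q$ it concentrates around its mean $\sum_{j\le j^*}(1-\lambda_j+\log\lambda_j)=-j^*/2$, using $\sum_{j\le j^*}\lambda_j=(1+o(1))j^*/2$ and $\sum_{j\le j^*}\log\lambda_j=-j^*+O(\log\log n)$ by Stirling's formula. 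Transferring this through $q_n=\EE_Q[\mathbf 1_{\{\text{survive}\}}\,dP/dQ]$ then yields matching upper and lower bounds $q_n=n^{-1/(2\alpha)\pm o(1)}$; the full path constraint $W_i\ge -O(1)$ for all $i\le j^*$ is essential for the upper bound, since the endpoint value $W_{j^*}$ alone is far too likely to be acceptable.

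It remains to run the moment arguments. For $\alpha<1/3$ one has $\binom n2\,p\,q_n=n^{3/2-1/(2\alpha)+o(1)}\to 0$, so w.h.p.\ $\Gnp$ has no seed edge. For $\alpha>1/3$ one has $\EE[\#\text{seed edges}]\to\infty$, and I would finish with a second moment computation: the dominant part of $\EE[(\#\text{seed edges})^2]$ comes from pairs of edges whose explorations, run up to size $\Theta(\log n)$, occupy disjoint vertex sets and are hence essentially independent, so after bounding the contribution of nearby or overlapping pairs of edges one obtains $\mathrm{Var}=o(\EE^2)$ and concludes by Chebyshev. I expect the two principal obstacles to be: (i) making the exploration-to-branching-process coupling precise enough that the constant $1/(2\alpha)$ emerges with matching bounds — this requires control of the lower-order corrections to $|S|$, of the tails of the cascades, and of the slowdown once $|S|$ ceases to be $o(n)$; and (ii) the correlation estimates in the second moment, where one must rule out that conditioning on one edge being a seed edge substantially boosts the probability that a nearby edge is one. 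For (ii) it is likely cleaner to run the second moment on ``small susceptible subgraphs'' — vertex sets of size $(1+o(1))\alpha^{-1}\log n$ that percolate internally from an internal edge, which are genuinely local (cf.\ Proposition~\ref{P_SC}) — and then invoke the mopping-up step to pass from such a subgraph to full percolation.
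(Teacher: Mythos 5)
Your reduction and your constants check out: a seed edge is an edge that is also a contagious pair, the event $\l\{u,v\},G\r_2=V$ is indeed measurable with respect to the edges other than $uv$, and your exponent $q_n=n^{-1/(2\alpha)+o(1)}$ agrees with Theorem~\ref{T_BP} at $r=2$ (there $k_2=\log n/\alpha$ and survival has probability $\exp[-\tfrac12k_2(1+o(1))]$), so $\binom n2 p\,q_n=n^{3/2-1/(2\alpha)+o(1)}$ does switch behaviour at $\alpha=1/3$. This matches the paper, which deduces Theorem~\ref{T_SP} as the case $r=2$, $\ell=1$ of Theorem~\ref{T_pcHr} ($\alpha_{2,1}=1/3$), itself obtained by rerunning Sections~\ref{S_pcLB}--\ref{S_pcUB} with the initial set required to span a copy of $H$. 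Your route to the survival asymptotics is genuinely different: the paper counts minimally susceptible graphs exactly via the recursion \eqref{E_mrec} and reads off the growth rate from a Perron eigenvalue, whereas you tilt the inhomogeneous walk to a critical one. That can be made to work, with two cautions. First, concentration of the log-likelihood ratio $L$ under $Q$ is not by itself enough for the upper bound, since $\EE_Q[\mathbf 1_{\mathrm{surv}}e^{L}]$ could a priori be carried by the upper tail of $L$; what actually saves you is that on $\{W_i\ge-C\ \forall i\le j^*\}$ Abel summation and $\log\lambda_j\le0$ give the \emph{pathwise} bound $\sum_j X_j\log\lambda_j\le\sum_j\log\lambda_j+O(C\log\log n)$, hence $L\le-j^*/2+O(\log\log n)$ deterministically on the survival event. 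Second, the exploration-to-walk coupling errors must be kept to $o(\log n)$ in the exponent uniformly over the conditioning; the paper's union-bound formulation (Lemma~\ref{L_Pki}) avoids this entirely.

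The genuine gap is the second moment for $\alpha>1/3$. You defer ``bounding the contribution of nearby or overlapping pairs'' as routine, but this is exactly what the paper identifies as the main difficulty, and a bound that only tracks the number of shared vertices fails. Two minimally susceptible witnesses of size $k$ overlapping in $q$ vertices can share up to $2q-3$ edges (the shared part may itself be susceptible), and each shared edge saves a factor $p^{-1}$ in the conditional probability while each shared vertex costs only a factor of order $k^2/n$ in the count of placements; since $(k^2/n)p^{-2}=k^2/(np^2)=\Theta(\log^3 n)\to\infty$, the overlap-$q$ terms are not summably small in $q$ without structural control on how many witness pairs can realize such dense overlaps. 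The paper's device is to restrict to infections witnessed by \emph{triangle-free} subgraphs --- shown in Lemma~\ref{L_hat:mLB} to cost only $e^{o(k)}$ in the count --- so that Mantel's theorem caps the shared part at $q^2/4$ edges, combined with a separate chronological argument for large overlaps (Lemma~\ref{L_corr}). Some such idea, or a substitute for it, is needed to complete your step (ii); without it the supercritical half of the theorem is not proved.
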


The case of $K_4$-bootstrap percolation, corresponding to $r=2$, appears
to be special:
We conjecture that existence of a seed edge is the easiest way for a
graph to $K_4$-percolate, and consequently that the inequality in
Theorem~\ref{T_K4} can be made an equality, identifying the asymptotic
threshold for $K_4$-percolation.  This is similar to other situations
where a threshold of interest on $\Gnp$ coincides with that of a more
fundamental event.  For instance, 
with high probability,  
$\Gnp$ is connected if and only if it
has no isolated vertices (see \cite{ER59}); $\Gnp$ contains a
Hamiltonian cycle if and only if the minimal degree is at least $2$
(Koml\'os and Szemer\'edi~\cite{KS83}).

Essentially, if $G$ $K_4$-percolates, then either there is a seed edge,
or some other small structure that serves as a seed (i.e.,
$K_4$-percolates and exhausts $G$ by adding doubly connected vertices),
or else, there are at least two large structures within $G$ that
$K_4$-percolate independently.  Since $p_c\to 0$, having multiple large
percolating structures within $G$ is less likely.

For $r>2$, having a seed is no longer the easiest way for a graph to
$K_4$-percolate.  Indeed, by \cite{BBM12}, the critical probability for
$K_{r+2}$-bootstrap percolation is $n^{-(2r)/(r^2+3r-2)}$ up to
poly-logarithmic factors (note that $r$ in $\cite{BBM12}$ is $r+2$ here).
The threshold for having a seed is of order $n^{-1/r} (\log n)^{1/r-1}$,
which is much larger (see Theorem~\ref{T_pcHr}).

\subsection{A non-homogeneous branching process}
\label{S_intro:BP}

Given an edge $e=(x_0,x_1)$, we can explore the graph to determine if it
is a seed edge.  The number of vertices that are connected to both of
its endpoints is roughly Poisson with mean $np^2$.  In our context, the
interesting $p$ are $o(n^{-1/2})$, and therefore the number of such
vertices has small mean, which we denote by $\eps=np^2$.  If there are
any such vertices, denote them $x_2,\dots$. We then seek vertices
connected to $x_2$ and at least one of $x_0,x_1$.  The number of such
vertices is roughly $\Poi(2\eps)$.  Indeed, the number of vertices
connected to the $k$th vertex and at least one of the previous vertices is (approximately)
$\Poi(k\eps)$.

This leads us to the case $r=2$ of the following non-homogeneous branching
process defined by parameters $r\in\N$ and $\eps>0$.  The process starts
with a single individual.  The first $r-2$ individuals have precisely
one child each.  For $n\geq r-1$, the $n$th individual has a Poisson number
of children with mean $\binom{n}{r-1} \eps$,
where here $\eps=np^r$.  Thus for $r=2$ the $n$th
individual has a mean of $n\eps$ children.  The process may die out
(e.g., if individual $r-1$ has no children).  However, if the process
survives long enough the mean number of children exceeds one and the
process becomes super-critical.  Thus the probability of survival is
strictly between $0$ and $1$.  Formally, this may be defined in terms of
independent random variables $Z_n = \Poi\left(\binom{n}{r-1}\eps\right)$
by $X_t = \sum_{n=r-1}^{t} Z_n - 1$.  Survival is the event
$\{X_t \geq 0, \forall t\}$.

\begin{thm}\label{T_BP}
  As $\eps\to0$, we have that
  \[
    \P(X_t>0, \forall t) =
    \exp \left[ -\frac{(r-1)^2}{r} k_r (1+o(1)) \right]
  \]
  where
  \[
    k_r = k_r(\eps) = \left(\frac{(r-1)!}{\eps}\right)^{1/(r-1)}.
  \]
\end{thm}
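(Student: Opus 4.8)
The plan is to analyze the random walk $X_t = \sum_{n=r-1}^t Z_n - 1$, where $Z_n = \Poi(\binom{n}{r-1}\eps)$, and identify where the transition from subcritical to supercritical behaviour occurs. The natural scale is the value $m$ at which $\mathbf{E} Z_m = \binom{m}{r-1}\eps \approx 1$; solving gives $m \approx ((r-1)!/\eps)^{1/(r-1)} = k_r$. For $n \ll k_r$ the increments have tiny mean and the walk behaves like a slowly decaying process that is very likely to hit $-1$ (death); for $n \gg k_r$ the mean increment exceeds one and the walk drifts to $+\infty$, so survival is essentially equivalent to surviving until time $\sim k_r$. Thus I expect $\P(\text{survival})$ to be dominated by the probability of never dropping to $-1$ during the first $\Theta(k_r)$ steps, and the exponential rate should come from a large-deviations / Laplace-type estimate over that window.

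\medskip

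First I would set up the computation via the survival probability $q(n) := \P(X_t \geq 0 \ \forall t \mid \text{process reaches generation } n \text{ with one individual})$, equivalently work with the generating-function recursion for the non-homogeneous Galton–Watson process. Writing $s_n$ for the extinction probability of the subtree rooted at a generation-$n$ individual, Poisson offspring gives the clean recursion $s_{n} = \exp\!\big(-\binom{n}{r-1}\eps(1-s_{n+1})\big)$, with the first $r-2$ deterministic steps contributing $s_1 = \cdots = s_{r-2}$-type identifications and the answer being $1 - s_1$ (or $1-s_{r-1}$ up to the trivial initial segment). Setting $u_n = 1 - s_n$ (the survival probability from generation $n$), this becomes $u_n = 1 - \exp(-\binom{n}{r-1}\eps\, u_{n+1})$. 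Second, I would study this recursion asymptotically as $\eps \to 0$: for $n$ of order $k_r$ put $n = x k_r$ and $u_n = v(x)$, and note $\binom{n}{r-1}\eps \approx x^{r-1}$, so heuristically $v(x) = 1 - e^{-x^{r-1} v(x+o(1))}$. For $x < 1$ the only bounded solution forces $v(x)$ to be exponentially small, and tracking the product of the linearized factors $\prod (\binom{n}{r-1}\eps)$ over $n$ ranging up to $\approx k_r$ should produce the claimed rate $\frac{(r-1)^2}{r} k_r$: indeed $\sum_{n \le k_r} \log\big(1/(\binom{n}{r-1}\eps)\big) \approx \sum_{n\le k_r} (r-1)\log(k_r/n) \approx (r-1) k_r \int_0^1 \log(1/x)\,dx = (r-1)k_r$, and a more careful accounting of the prefactors $(r-1)/r$ coming from the $\binom{n}{r-1}$ versus $n^{r-1}/(r-1)!$ correction and the boundary region near $x=1$ adjusts the constant to $(r-1)^2/r$.

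\medskip

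For the rigorous version I would prove matching upper and lower bounds. For the \emph{upper} bound on survival: condition on survival and observe the process must have at least one individual in every generation $n \le N := \lfloor \delta k_r\rfloor$ for small fixed $\delta$; bounding by a single-line argument, $\P(\text{survive to generation } N) \le \P(Z_{r-1} \geq 1)\prod \cdots$ is too crude, so instead I would use the recursion $u_n \le \binom{n}{r-1}\eps\, u_{n+1}$ (from $1-e^{-t}\le t$), iterate down from a level $n^\star \approx k_r$ where $u_{n^\star} = O(1)$ trivially, and collect $u_{r-1} \le u_{n^\star}\prod_{n=r-1}^{n^\star-1}\binom{n}{r-1}\eps$, then estimate the product by Stirling. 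For the \emph{lower} bound: exhibit a surviving strategy — e.g. require the walk to reach a comfortably large value (say $C\log(1/\eps)$) by generation $\approx k_r$ and then invoke standard supercritical-branching survival with probability bounded below; the cost is a product of lower bounds $u_n \ge c\binom{n}{r-1}\eps$ valid while $\binom{n}{r-1}\eps$ is bounded away from $1$, giving the same product up to the constant, plus handling the near-critical window $n \in [(1-\eta)k_r, k_r]$ separately where one shows the contribution is lower order. The main obstacle I anticipate is pinning down the constant $(r-1)^2/r$ exactly: the crude product $\prod \binom{n}{r-1}\eps$ gives the leading $(r-1)k_r$ term, but the exact coefficient requires carefully matching the discrete sum to $(r-1)k_r\int_0^1\log(1/x)\,dx$ \emph{together with} the $r$-dependent correction from expanding $\binom{n}{r-1}$, and controlling the transitional region around $n \sim k_r$ where the linearization $1-e^{-t}\approx t$ fails — this is where the factor $1/r$ (rather than $1$) in $(r-1)^2/r = (r-1)\cdot\frac{r-1}{r}$ must be accounted for, presumably via a more precise solution of the limiting fixed-point equation near $x=1$.
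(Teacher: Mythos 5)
There is a genuine gap, and it occurs at the very first step of your argument. The recursion $s_n=\exp\bigl(-\binom{n}{r-1}\eps(1-s_{n+1})\bigr)$ would be valid for a time-inhomogeneous Galton--Watson process in which the offspring law of an individual depends only on its generation and the subtrees of distinct individuals evolve independently. The process here is not of that type: the $n$th individual \emph{in the exploration order} has $\Poi\bigl(\binom{n}{r-1}\eps\bigr)$ children, so its offspring law depends on how many individuals were discovered before it, i.e.\ on the entire history. The subtrees rooted at two siblings are neither independent nor identically distributed (they compete for indices), so there is no well-defined quantity ``survival probability of the subtree rooted at a generation-$n$ individual'' satisfying your fixed-point equation. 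The only Markov structure available is the pair (index, queue length), i.e.\ the non-homogeneous walk $X_t$ itself, not a scalar generating-function recursion.

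The quantitative consequence is that your method produces the wrong constant, in a direction that contradicts the theorem. Your iteration $u_{r-1}\le u_{n^\star}\prod_{n<n^\star}\binom{n}{r-1}\eps$ gives, by your own computation, the exponential rate $(r-1)k_r$, whereas the theorem asserts the rate $\frac{(r-1)^2}{r}k_r=(r-1)k_r\cdot\frac{r-1}{r}$, which is strictly \emph{smaller}; thus the proposed upper bound $e^{-(r-1)k_r(1+o(1))}$ lies below the true survival probability. The missing factor $e^{(r-1)k_r/r}$ is of leading exponential order: it cannot come from the $\binom{n}{r-1}$ versus $n^{r-1}/(r-1)!$ correction (which contributes only $e^{O(\log k_r)}$), nor from a near-critical window whose contribution you assume to be lower order. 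It is the entropy of the surviving histories --- survival is not dominated by a single line of descent but by a super-exponential number of tree structures. The paper's proof is built precisely around counting these: Lemma~\ref{L_BP} gives the exact identity $\Psi_r(k,i)=e^{-\eps\binom{k-i}{r}}\eps^{k-r}m_r(k,i)/(k-r)!$, reducing the survival probability (equivalently, the probability of reaching total size $k_r$) to the asymptotics of the graph counts $m_r(k)$, whose growth rate $e^{-(r-2)k+o(k)}(k-r)!\bigl(k^{r-1}/(r-1)!\bigr)^k$ is extracted from the recursion \eqref{E_mrec} via a Perron eigenvalue computation (Lemmas~\ref{L_mUB} and~\ref{L_hat:mLB}); substituting $\eps k_r^{r-1}/(r-1)!=1$ and $\eps\binom{k_r}{r}\sim k_r/r$ yields $r-2+\frac1r=\frac{(r-1)^2}{r}$. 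Any correct walk-based argument would have to reproduce this combinatorial factor, e.g.\ via a large-deviation analysis over the whole trajectory profile, which your proposal does not contain.
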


Note that $\eps{k_r\choose r-1}\approx1$.  Hence $k_r$ is roughly the
time at which the process becomes super-critical.

\subsection{Outline of the proof}

In Section~\ref{S_pcLB}, we obtain a recurrence 
\eqref{E_mrec}
for the number of graphs which $r$-percolate
with the minimal number of edges.
Using this, we estimate the 
asymptotics of such graphs, and thereby  
identify a quantity $\beta_*(\alpha)$, 
so that for $\alpha<\alpha_r$ 
(and $p$ as in Theorem~\ref{T_main}), 
with high probability no $r$-percolation
on $\Gnp$ grows to size $\beta\log{n}$, 
for some $\beta \ge\beta_*(\alpha)+\delta$.
We put $\beta_r(\alpha)=k_r(np^r)$, where
$k_r$ is as in Section~\ref{S_intro:BP}.
Moreover, we find that $\beta_*(\alpha)=\beta_r(\alpha)$
if and only if $\alpha=\alpha_r$, 
suggesting that 
$\alpha_r$ is indeed the critical value of $\alpha$. 

In Section~\ref{S_pcUB}, we show by the 
second moment method that, if $\alpha>\alpha_r$, 
then $\Gnp$ $r$-percolates with high probability.
The main difficulty
is showing that contagious sets are sufficiently independent. 
Since vertices in a contagious
set need not be connected, 
it seems that 
perhaps a straightforward argument is not available. 
We instead study contagious sets which infect
\emph{triangle-free} subgraphs of $\Gnp$. 
Modifying the recurrence \eqref{E_mrec}, we
obtain a recursive lower bound for graphs which $r$-percolate
without using triangles, 
and find that this restriction
does not significantly effect the asymptotics. 
Using Mantel's theorem, 
we establish the approximate 
independence of correspondingly restricted 
$r$-percolations, which we call $\hat r$-percolations, 
with comparative ease.

A secondary obstacle is the need for 
a lower bound on the asymptotics of graphs which $\hat r$-percolate,
with a significant proportion of 
vertices in the \emph{top level} 
(i.e.,
vertices $v$ of a graph $G=(V,E)$ such that 
$v\in V_t\setminus V_{t-1}$ where $V_t=V$).
Such bounds are required to estimate the growth
of super-critical $\hat r$-percolations on $\Gnp$, 
which have grown
larger than the critical size $\beta_r(\alpha)\log{n}$. 
Using a lower bound 
for the overall number of graphs which $\hat r$-percolate, 
we obtain a lower bound on the number of such
graphs with $i=\Omega(k)$ vertices in the top level.
This estimate, 
together with the approximate independence result, 
is sufficient to show
that with high probability $\Gnp$ 
has subgraphs of size $\beta\log{n}$ 
which $r$-percolate,
for some $\beta\ge \beta_*(\alpha)+\delta$
(where for $\alpha>\alpha_r$, $\beta_r(\alpha)<\beta_*(\alpha)$).  

Finally, to conclude, we show 
by the first moment method that for any given $A>0$,  
with high probability
an $r$-percolation which survives to size 
$(\beta_*(\alpha)+\delta)\log{n}$ survives to size $A\log{n}$. 
Having established the existence of a subgraph of 
$\Gnp$ of size $A\log{n}$, for a 
sufficiently large value of $A$ (depending on 
the difference $\alpha-\alpha_r$), it is 
straightforward to show that 
with high probability
$\Gnp$ $r$-percolates.

\section{Lower bound for $p_c(n,r)$}\label{S_pcLB}

In this section, we prove the sub-critical case of Theorem~\ref{T_main}, by the
first moment method.  Throughout this section we fix some $r\geq 2$.
More precisely, we prove the following

\begin{prop}\label{P_SC}
  Let 
  \begin{align*}
    \alpha_r &= (r-1)!\left(\frac{r-1}{r}\right)^{2(r-1)}, &
    p=\vartheta_r(\alpha,n)&=\left(\frac{\alpha}{n\log^{r-1}n}\right)^{1/r}.
  \end{align*}
  Define $\beta_*(\alpha)$ to be the unique positive root of 
  \[
    r + \beta\log\left(\frac{\alpha\beta^{r-1}}{(r-1)!}\right)
    -\frac{\alpha\beta^r}{r!} 
    -\beta(r-2).
  \]
  For any $\alpha<\alpha_r$ and $\delta>0$, with high probability, for
  every $I\subset[n]$ of size $r$, we have that
  $|\l I,\Gnp \r_r| \leq (\beta_*(\alpha)+\delta)\log n$.
\end{prop}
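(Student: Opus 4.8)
The plan is to use the first moment method. Fix $\beta = \beta_*(\alpha) + \delta$. If some set $I$ of size $r$ has $|\l I, \Gnp\r_r| > \beta \log n$, then in particular there is a subgraph $G$ of $\Gnp$ on exactly $k := \lceil \beta \log n \rceil$ vertices that $r$-percolates from a minimal contagious set of size $r$ (take the first $k$ vertices to be infected in the percolation dynamics, in order). Such a $G$ percolates with some number $m = m(k)$ of edges, and we want to bound the expected number of copies of percolating configurations on $k$ vertices inside $\Gnp$. Concretely, I would let $N(k)$ be the number of graphs on vertex set $[k]$ that $r$-percolate from the set $\{1,\dots,r\}$ using the vertex order $1,2,\dots,k$, and with the minimal number of edges (so that vertex $j > r$ contributes exactly $r$ edges back to earlier vertices, plus $\binom{r}{2}$ edges among the seed). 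Then the expected number of such configurations appearing in $\Gnp$ is at most $\binom{n}{k} k! \cdot N(k) \cdot p^{m}$, where $m = \binom{r}{2} + r(k-r)$, and we aim to show this tends to $0$.

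The first key step is to derive the recurrence \eqref{E_mrec} for $N(k)$ (or a closely related counting function): a graph percolating on $[k]$ in order is obtained from one on $[k-1]$ by attaching vertex $k$ to some $r$-subset of the already-infected vertices, but one must be careful to avoid overcounting and to ensure vertex $k$ is genuinely in the final infected set — I expect the clean bookkeeping device (used in the paper) is to count by the order of infection and weight appropriately, giving something like $N(k) \le N(k-1) \binom{k-1}{r}$ with boundary corrections for the first $r-2$ "forced" vertices, mirroring the branching process of Section~\ref{S_intro:BP}. Iterating, $N(k)$ is roughly $\prod_{j=r-1}^{k-1}\binom{j}{r} \approx \big(\frac{k^{r}}{r!\,e^{r}}\big)^{k}$ up to lower-order factors, and plugging $k = \beta \log n$, $p^r = \alpha/(n \log^{r-1} n)$ into $\binom{n}{k} k!\, N(k)\, p^m$, taking logarithms and dividing by $\log n$, the exponent becomes (to leading order) exactly
\[
  \beta\Big[\, r + \beta\log\!\Big(\tfrac{\alpha\beta^{r-1}}{(r-1)!}\Big) - \tfrac{\alpha\beta^{r}}{r!} - \beta(r-2)\,\Big] \cdot \frac{1}{\beta},
\]
i.e.\ the bracketed function defining $\beta_*(\alpha)$ (up to rescaling), which is negative for $\beta > \beta_*(\alpha)$ since $\beta_*$ is its largest positive root and the function is eventually decreasing. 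A union bound over the $O(n)$ choices of starting vertex then finishes the argument, since the per-configuration probability decays polynomially with a strictly negative exponent.

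The main obstacle will be two-fold: first, getting the recurrence \eqref{E_mrec} exactly right — in particular handling the $r-2$ initially forced vertices, avoiding double-counting when vertex $k$ could be attached to a set containing vertices that were themselves infected "later," and confirming that restricting to the minimal number of edges genuinely dominates the count (extra edges only cost extra factors of $p = o(1)$, so this should be fine, but it needs a clean statement). Second, verifying that the function in the definition of $\beta_*(\alpha)$ has a \emph{unique} positive root and that the relevant exponent is negative precisely for $\beta > \beta_*(\alpha)$: one checks the bracketed expression is $r > 0$ at $\beta = 0^+$... wait, it is $-\infty$ as $\beta \to 0^+$ because of the $\beta\log\beta^{r-1}$ term — so rather it tends to $0$ from below near $0$, increases, and then the $-\alpha\beta^r/r!$ term eventually dominates, giving a single sign change; I would make this convexity/monotonicity analysis precise via elementary calculus. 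Finally, I would note that summing over all $k$ from $(\beta_*(\alpha)+\delta)\log n$ up to $n$ is not needed — it suffices to rule out percolation reaching size exactly $\lceil(\beta_*(\alpha)+\delta)\log n\rceil$, since the infected set is monotone and percolating to a larger size entails passing through this size, and the union bound at this single scale already gives $o(1)$.
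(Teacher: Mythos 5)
There is a fatal gap at the heart of your first--moment computation. The expected number of minimally percolating subgraphs of size $k=(\beta_*(\alpha)+\delta)\log n$ in $\Gnp$ is \emph{not} $o(1)$ --- it is $n^{\Theta(1)}$. Counting unordered subgraphs correctly (via the level recursion \eqref{E_mrec} and the bound $m_r(k)\le (k-r)!\bigl(k^{r-1}/(r-1)!\bigr)^k e^{-(r-2)k}$ of Lemma~\ref{L_mUB}), the exponent one obtains is $r+\beta\log\bigl(\alpha\beta^{r-1}/(r-1)!\bigr)-\beta(r-2)$, i.e.\ the defining function of $\beta_*$ \emph{without} the term $-\alpha\beta^r/r!$. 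That missing term does not come from counting percolating subgraphs at all: in the paper it is the factor $\bigl(1-\binom{k-i}{r}p^r(1-p)^k\bigr)^{n-k}\approx n^{-\alpha\beta^r(1-\gamma)^r/r!}$ in Lemma~\ref{L_Pki}, namely the probability that \emph{no vertex outside} the subgraph is $r$-connected to the vertices below its top level, so that the parallel dynamics actually satisfies $|V_t|=k$ at some time $t$. Since $\mu_r^*(\alpha,\beta_*(\alpha))=0$, the bare subgraph count at $\beta=\beta_*+\delta$ is $n^{\alpha\beta_*^r/r!+o(1)}\to\infty$ (and Lemma~\ref{L_hat:mLB} shows this is a genuine lower bound, not an artifact), so your union bound diverges; one must count $(k,i)$-contagious sets (subgraph \emph{plus} closure condition), not percolating subgraphs. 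Compounding this, your bound $\binom{n}{k}k!\,N(k)p^m$ with $N(k)=\prod_j\binom{j-1}{r}$ over-counts each graph by its number of valid infection orders, a factor of size $e^{\Theta(k\log k)}=n^{\Theta(\beta\log\log n)}$, which by itself already ruins the estimate; this is why the paper organizes the count by levels through $m_r(k,i)$ and needs the nontrivial eigenvector bound of Claim~\ref{C:induct} to control it.

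Second, your closing remark --- that it suffices to rule out the percolation reaching size exactly $\lceil(\beta_*(\alpha)+\delta)\log n\rceil$ because the infected set ``passes through'' every size --- is false for the parallel dynamics: $|V_{t+1}|-|V_t|$ can be large, so the process may jump over the entire window $[(\beta_*(\alpha)+\delta)\log n,\ \beta_r(\alpha)\log n]$ without ever pausing in it. (Your ordered-prefix device does produce a sequentially percolating subgraph of every intermediate size, but as explained above those subgraphs are far too numerous for a first--moment bound.) This is precisely why the paper's proof has a second half: for $k<\beta_r(\alpha)\log n$ it multiplies $E_r(k,i)$ by the probability $p_r(k,i)$ of acquiring the $\beta_r\log n-k$ additional vertices in a single step, which is the content of Claim~\ref{C:Epmax} and most of Appendix~\ref{A:Epmax}. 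Without an argument of this kind the range $(\beta_*(\alpha)+\delta,\beta_r(\alpha)]$ is simply not covered. A minor further point: as $\beta\to0^+$ the defining function tends to $r>0$, not to $0$ from below; the correct monotonicity statement is Lemma~\ref{L_beta*}, which shows $\mu_r^*(\alpha,\cdot)$ decreases from $r$ to $-\infty$ and hence has a unique positive zero.
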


The methods of Section~\ref{S_pcUB} can be used to show that 
with high probability 
there are
sets $I$ of size $r$ which infect $(\beta_*-\delta) \log n$ vertices.  For
$\alpha<\alpha_r$, we have the following upper bound 
\[
\beta_*(\alpha) \leq \left(\frac{(r-1)!}{\alpha}\right)^{1/(r-1)}.
\]
This is asymptotically optimal for $\alpha\sim \alpha_r$.

\subsection{Small susceptible  graphs}\label{S_m}

As noted in the introduction, a key idea is to study the number of
subgraphs of size $k = \Theta(\log n)$ which are susceptible
with the minimal number of edges.
If none exist, then there can be no contagious set in $G$.  Thus an
important step is developing estimates for the number of 
such susceptible
graphs of size $k$.  

For a graph $G$ and initial infected set $V_0$, recall that $V_t=V_t(V_0,G)$ is the
set of vertices infected up to and including step $t$.  We let
$\tau = \inf\{t : V_t=V_{t+1}\}$.  
We put $I_0=V_0$ and $I_t=V_t\setminus V_{t-1}$, for $t\ge1$. 
We refer to $I_t$ as the set of vertices
infected in \emph{level $i$}.
In particular, the \emph{top level} of $G$ is 
$I_\tau$.

For a graph $G$, we let $V(G)$ and $E(G)$ denote
its vertex and edge sets, and put $|G|=|V(G)|$. 

We call a graph \emph{minimally susceptible} if it is susceptible and
has exactly $r(|G|-r)$ edges.  If a graph $G$ is susceptible, it
has at least $r(|G|-r)$ edges, since each 
vertex in $I_t$, $t\ge1$, is connected to $r$ vertices
in $V_{t-1}$. 

\begin{defn}
  Let $m_r(k)$ denote the number of minimally susceptible graphs $G$
  with vertex set $[k]$ such that $[r]$ is a contagious set for $G$.
  Let $m_r(k,i)$ denote the number of such graphs with $i$ vertices
  infected in the top level  (so that $m_r(k)=\sum_{i=1}^{k-r}m_r(k,i)$).
\end{defn}

We note that $m_{r}(k,k-r)=1$, and claim that for $i<k-r$,
\begin{equation}\label{E_mrec}
  m_r(k,i) = {k-r\choose i} \sum_{j=1}^{k-r-i} a_r(k-i,j)^i m_r(k-i,j),
\end{equation}
where
\begin{equation}\label{E_a}
  a_r(x,y) = {x\choose r}-{x-y\choose r}.
\end{equation}

To see this, note that removing the top level from a minimally
susceptible graph $G$ of size $k$ leaves a minimally susceptible graph
$G'$ of size $k-i$.  If the top level of $G'$ has size $j$, then all
vertices in the top level of $G$ are connected to $r$ vertices of $G'$,
with at least one in the top level of $G'$.  Thus each vertex has
$a_r(k-i,j)$ options for the connections.  The $\binom{k-r}{i}$ term
accounts for the set of possible labels of the top level of $G$.

To study asymptotics of $m$ it is convenient to define
\begin{equation}\label{E_sig}
  \sigma_r(k,i) =
  \frac{m_r(k,i)}{(k-r)!}\left(\frac{(r-1)!}{k^{r-1}}\right)^k .
\end{equation}
Substituting this in \eqref{E_mrec} gives
\begin{equation}\label{E_sigrec}
\sigma_r(k,i) = \sum_{j=1}^{k-r-i} A_r(k,i,j) \sigma_r(k-i,j)
\quad \text{for $i< k-r$,}
\end{equation}
where 
\begin{equation}\label{E_A}
  A_r(k,i,j) = \frac{j^i}{i!} \left(\frac{k-i}{k}\right)^{(r-1)k}
  \left(\frac{(r-1)!}{(k-i)^{r-1}}\frac{a_r(k-i,j)}{j}\right)^i.
\end{equation}

We make the following observation.

\begin{lem}\label{L_A}
  Let $A_r(k,i,j)$ be as in \eqref{E_A} and put
  $A_r(i,j) = j^i e^{-(r-1)i}/i!$.  For any $i<k-r$ and $j\le k-r-i$, we
  have that $A_r(k,i,j)$ is increasing in $k$ and converges to $A_r(i,j)$.
\end{lem}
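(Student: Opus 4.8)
The plan is to factor
\[
  A_r(k,i,j)=\frac{j^i}{i!}\cdot u_k\cdot v_k,
  \qquad
  u_k=\left(\frac{k-i}{k}\right)^{(r-1)k},
  \qquad
  v_k=\left(\frac{(r-1)!\,a_r(k-i,j)}{j\,(k-i)^{r-1}}\right)^{i},
\]
and to check separately that $u_k$ and $v_k$ are positive and increasing in $k$ throughout the range $k\ge i+j+r$ forced by the hypotheses $i<k-r$ and $j\le k-r-i$, with $u_k\to e^{-(r-1)i}$ and $v_k\to 1$.  Since a product of a positive constant with positive increasing sequences is increasing, and the product of the limits is the limit of the product, this gives $A_r(k,i,j)\nearrow \frac{j^i}{i!}e^{-(r-1)i}=A_r(i,j)$.

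For $u_k$ the plan is to invoke the classical fact that $(1-i/k)^k$ increases to $e^{-i}$: applying AM--GM to the $k+1$ numbers consisting of $1-i/k$ with multiplicity $k$ and a single $1$ gives $(1-i/k)^k\le(1-i/(k+1))^{k+1}$, while $k\log(1-i/k)\to -i$ identifies the limit; raising to the power $r-1$ preserves both properties.

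The less mechanical factor is $v_k$, and the key step is the telescoping identity (from Pascal's rule)
\[
  a_r(x,j)=\binom{x}{r}-\binom{x-j}{r}=\sum_{l=0}^{j-1}\binom{x-l-1}{r-1}.
\]
Dividing by $x^{r-1}/(r-1)!$ and setting $x=k-i$ turns the base of $v_k$ into
\[
  \frac{(r-1)!\,a_r(k-i,j)}{j\,(k-i)^{r-1}}
  =\frac{1}{j}\sum_{l=0}^{j-1}\ \prod_{m=1}^{r-1}\left(1-\frac{l+m}{k-i}\right).
\]
For $k\ge i+j+r$ the largest value of $l+m$ appearing is $j+r-2<k-i$, so every factor $1-(l+m)/(k-i)$ lies in $(0,1)$ and is increasing in $k$; hence each product, their average over $l$, and finally its $i$th power $v_k$ is positive and increasing in $k$, and since each factor tends to $1$ we get $v_k\to 1$.

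Assembling the three factors yields the statement.  There is essentially no obstacle here beyond bookkeeping; the only point that genuinely uses a hypothesis is the positivity of all the linear factors in the displayed formula for $v_k$, which is exactly what $j\le k-r-i$ guarantees.
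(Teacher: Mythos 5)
Your proposal is correct and follows essentially the same route as the paper: split off the $\left(\frac{k-i}{k}\right)^{(r-1)k}$ factor (monotone convergence of $(1-i/k)^k$ to $e^{-i}$) and rewrite $\frac{(r-1)!\,a_r(k-i,j)}{j(k-i)^{r-1}}$ as an average over $j$ terms, each a product of factors $1-\frac{\ell+m}{k-i}$ increasing to $1$; your Pascal-rule telescoping is the same identity the paper derives via falling factorials in its Claim~\ref{C_a}. The only additions are cosmetic (the AM--GM justification of the classical monotonicity fact and the explicit positivity check from $j\le k-r-i$), so nothing further is needed.
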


\begin{proof}
It is well known that for $m>0$ we have $(1-m/k)^k$ is increasing and tends
to $e^{-m}$.  Thus
\[
\frac{j^i}{i!}\left(\frac{k-i}{k}\right)^{(r-1)k} \to A_r(i,j).
\]
The lemma follows by \eqref{E_A} and the following claim, a formula which
will also be of later use.

\begin{claim}\label{C_a}
For all integers $x\ge r$ and $1\le y\le x-r$, we have that 
\[
\frac{(r-1)!}{x^{r-1}}\frac{a_r(x,y)}{y}
=
\frac{1}{y} \sum_{\ell=1}^{y}\left(\frac{x-\ell}{x}\right)^{r-1}.
\]
\end{claim}

\begin{proof}
For an integer $m\ge r$, let  
$(m)_r=m!/(m-r)!$ denote the $r$th falling factorial 
of the integer $m$. 
Since 
\[
(m)_r-(m-1)_r=r(m-1)^{r-1}.
\] 
it follows that 
\[
\frac{(r-1)!}{x^{r-1}}\frac{a_r(x,y)}{y}
=\frac{(x)_r-(x-y)_r}{ryx^{r-1}}
=\frac{1}{y} \sum_{\ell=1}^{y}\left(\frac{x-\ell}{x}\right)^{r-1}
\]
as required. 
\end{proof}

Since each term on the right of Claim~\ref{C_a} is increasing to $1$, the same
holds for their average. The proof is complete.
\end{proof}

\subsection{Upper bounds for susceptible graphs}
\label{S_mUB}

Our first task is to derive bounds on the number of 
minimally susceptible  graphs
of size $k$ with $i$ vertices in the top level.  
This relies on the
recurrence \eqref{E_mrec}.

\begin{lem}\label{L_mUB}
  Fix $r\ge2$.  For all $k>r$ and $i\le k-r$, we have that
  \[
    m_r(k,i) \le \frac{e^{-i-(r-2)k}}{\sqrt{i}}
    (k-r)! \left(\frac{k^{r-1}}{(r-1)!}\right)^k.
  \]
  Equivalently, $\sigma_r(k,i) \leq i^{-1/2} e^{-i - (r-2)k}$.
\end{lem}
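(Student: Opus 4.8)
The plan is to prove the bound $\sigma_r(k,i) \leq i^{-1/2} e^{-i-(r-2)k}$ by strong induction on $k$, using the recurrence \eqref{E_sigrec} together with the monotonicity from Lemma~\ref{L_A}. The base case is $i = k-r$, where $m_r(k,k-r)=1$, so $\sigma_r(k,k-r) = \frac{1}{(k-r)!}\left(\frac{(r-1)!}{k^{r-1}}\right)^k$; one checks directly (via Stirling) that this is at most $(k-r)^{-1/2} e^{-(k-r) - (r-2)k}$, which should hold with room to spare because the $\left(\frac{(r-1)!}{k^{r-1}}\right)^k$ factor decays superexponentially. For the inductive step with $i < k-r$, I would apply \eqref{E_sigrec}, bound $A_r(k,i,j) \leq A_r(i,j) = j^i e^{-(r-1)i}/i!$ by Lemma~\ref{L_A}, and insert the inductive hypothesis $\sigma_r(k-i,j) \leq j^{-1/2} e^{-j - (r-2)(k-i)}$, giving
\[
\sigma_r(k,i) \leq \frac{e^{-(r-1)i}}{i!} e^{-(r-2)(k-i)} \sum_{j\ge1} j^{i-1/2} e^{-j}.
\]

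The crux is then to estimate the sum $S_i := \sum_{j\geq 1} j^{i-1/2} e^{-j}$ and to check the arithmetic of the exponents. For the exponent: $-(r-1)i - (r-2)(k-i) = -(r-2)k - i$, which is exactly the target exponent, so no slack is lost there — the whole burden falls on showing $\frac{1}{i!} S_i \leq i^{-1/2}$, i.e. $S_i \leq i!\, i^{-1/2}$. I would compare $S_i$ to the Gamma integral $\Gamma(i+1/2) = \int_0^\infty t^{i-1/2} e^{-t}\,dt$: since $t \mapsto t^{i-1/2} e^{-t}$ is unimodal with a single interior maximum, the sum $\sum_j$ is within a bounded factor of the integral, and more carefully $S_i \leq \Gamma(i+1/2)$ plus a correction, or one can use an Euler–Maclaurin / integral-comparison bound. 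Then Stirling-type bounds give $\Gamma(i+1/2) = \Gamma(i+1)\cdot i^{-1/2}(1+O(1/i)) = i!\, i^{-1/2}(1+O(1/i))$, which is essentially tight; so I expect the clean bound $S_i \leq i!\, i^{-1/2}$ may fail by a lower-order factor and will need either a slightly sharper version of Lemma~\ref{L_A}'s convergence (exploiting that $A_r(k,i,j) < A_r(i,j)$ strictly, with a quantitative gap) or a small $e^{-c}$ slack harvested elsewhere.

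That last point is what I expect to be the main obstacle: closing the induction with the constant $1$ in front rather than some $C>1$. The natural fixes are (i) to note that in \eqref{E_sigrec} the summation range is only $j \leq k-r-i$, not all $j\geq1$, and for small $k$ this truncation helps; (ii) to extract a genuinely exponential gain from the factor $\left(\frac{k-i}{k}\right)^{(r-1)k}$ versus its limit $e^{-(r-1)i}$ when $i$ is a constant fraction of $k$; or (iii) to prove the slightly weaker bound $\sigma_r(k,i) \leq C i^{-1/2} e^{-i-(r-2)k}$ first, then bootstrap. I would also double-check the edge behaviour at $i=1$ and at $i$ close to $k-r$ separately, since the Gamma-integral comparison is least comfortable for very small $i$. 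Once the sum estimate $S_i \leq i!\, i^{-1/2}$ (or its corrected form) is in hand, translating back to the statement about $m_r(k,i)$ is just unwinding the definition \eqref{E_sig}.
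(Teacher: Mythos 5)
Your overall strategy is exactly the paper's: induction on $k$ via \eqref{E_sigrec}, the monotone bound $A_r(k,i,j)\le A_r(i,j)$ from Lemma~\ref{L_A}, extension of the sum to all $j\ge1$, and the observation that the exponents combine to precisely $-i-(r-2)k$, so that the whole lemma reduces to the single inequality $\sum_{j\ge1} j^{i-1/2}e^{-j}\le i!\,i^{-1/2}$. That reduction is correct, and the base case $i=k-r$ does hold with ample room, as you say. The reduced inequality is stated in the paper as Claim~\ref{C:induct}.

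The gap is that you do not prove this inequality, and none of your fallback plans closes it. The paper devotes Appendix~\ref{A:induct} to it: writing $\Lambda(i)={\rm Li}(-i+1/2,1/e)$, it uses the series expansion of the polylogarithm in terms of zeta values together with the functional equation for $\zeta$ to obtain $\Lambda(i)<\Gamma(i+1/2)\bigl(1+\zeta(3/2)(e/(2\pi))^i\bigr)$, and then combines this with sharp two-sided Stirling-type bounds to verify the claim analytically for $i\ge 11$ and numerically for $i\le 10$. The inequality is true with constant exactly $1$ but is asymptotically tight, since $(e/i)^i\Lambda(i)\to\sqrt{2\pi}$; your instinct that there is no room to spare is right, which is precisely why a crude sum-versus-integral comparison will not do, and why the paper remarks that the claim actually fails for some $i$ without the $\sqrt{j}$ term. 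Of your three proposed repairs: (iii) is a non-starter, because a multiplicative constant $C$ in the inductive hypothesis cancels from both sides of the inductive step, leaving you needing the identical sum inequality; (ii) gives nothing in the regime where $i$ is bounded and $k\to\infty$, which is exactly where $A_r(k,i,j)\to A_r(i,j)$ and the inequality is tightest; and (i) is not developed. So the proposal correctly locates the crux but leaves it open, and the open step is the one that carries essentially all of the difficulty of the lemma.
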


\begin{proof}
  Since $m_r(k,k-r)=1$, it is straightforward to verify that the claim
  holds in the case that $i=k-r$. 

  For the remaining cases $i<k-r$, we prove the claim by induction on
  $k$.  Applying the inductive hypothesis to the right hand sum of
  \eqref{E_sigrec}, bounding $A_r(k,i,j)$ therein by $A_r(i,j)$
  using Lemma~\ref{L_A}, and
  extending the sum to all $j$ we have
  \[
    \sigma_r(k,i) \leq \sum_{j=1}^{\infty} A_r(i,j) j^{-1/2}
    e^{-j-(r-2)(k-i)}.
  \]
  Thus it suffices to prove that this sum is at most
  $i^{-1/2} e^{-i-(r-2)k}$.  Using the definition of $A_r(i,j)$ and
  cancelling the $e^{-(2-r)k}$ factors, we need the following
  
  \begin{claim}\label{C:induct}
    For any $i\geq 1$ we have
    \[
      \sum_{j=1}^{\infty} \frac{j^i e^{-i}}{i!} j^{-1/2} e^{-j} \leq
      i^{-1/2} e^{-i}.
    \]
  \end{claim}

  This is proved in Appendix~\ref{A:induct}.

We remark that Claim~\ref{C:induct} is fundamentally a pointwise bound on the
Perron eigenvector of the infinite operator $A_2$. (Other values of $r$ follow
since the influence of $r$ cancels out.)  This eigenvector decays
roughly as $e^{-i}$, but with some lower order fluctuations.  It appears
that the $\sqrt{i}$ correction can be replaced by various other slowly
growing functions of $i$.  However, Claim~\ref{C:induct} fails for certain
$i$ without the $\sqrt{j}$ term.
\end{proof}

\subsection{Susceptible subgraphs of $\Gnp$}
\label{S_Pki} 

With Lemma~\ref{L_mUB} at hand, we obtain upper bounds on the growth
probabilities of $r$-percolations on $\Gnp$.  

A set $I$ of size $r$ is
called \emph{$k$-contagious} in the graph $\Gnp$, if there is some $t$
so that $|V_t(I,\Gnp)| = k$, i.e., there is some time at which there are
exactly $k$ infected vertices.  The set $I$ is called $(k,i)$-contagious
if in addition the number of vertices infected at step $t$ is $i$, i.e.,
$|I_t(I,\Gnp)|=i$.  
Let $P_{r}(k,i)=P_{r}(p,k,i)$ denote the probability that a given
$I\subset [n]$, with $|I|=r$ is $(k,i)$-contagious.  Let
$P_r(k) = \sum_i P_r(k,i)$ denote the probability that such an $I$ is
$k$-contagious.  Finally, let $E_{r}(k,i)$ and $E_{r}(k)$ denote the
expected number of such subsets $I$.

We remark that $P_r(k)$ is not the same as the probability of
\emph{survival} to size $k$, which is given by
$\sum_{\ell\ge k}\sum_{i>\ell-k}P_{r}(\ell,i)$.

\begin{lem}\label{L_Pki}
  Let $\alpha>0$, and let $p=\vartheta_r(\alpha,n)$ (as defined in
  Proposition~\ref{P_SC}) and $\eps=np^r=\alpha/\log^{r-1}{n}$.  For
  $i\le k-r$ and $k\le n^{1/(r(r+1))}$, we have that
  \[
    P_{r}(k,i) \le (1+o(1)) 
    \frac{e^{-\eps{k-i\choose r}}\eps^{k-r}}{(k-r)!}m_{r}(k,i)
  \]
  where $o(1)$ depends on $n$, but not on $i,k$. 
\end{lem}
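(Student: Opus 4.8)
The plan is to prove Lemma~\ref{L_Pki} by a union bound over ``witness'' subgraphs, intersected with an event that excludes any further infection and thereby produces the factor $e^{-\eps\binom{k-i}{r}}$. \emph{First I would establish an event containment.} Suppose $I$ (with $|I|=r$) is $(k,i)$-contagious in $G=\Gnp$, witnessed at time $t$, and set $W=V_t(I,G)$ (of size $k$) and $W'=V_{t-1}(I,G)$ (of size $k-i$). Since $V_0=I\subseteq W$ and the sets $V_s$ increase, the first $t$ steps of the process take place entirely inside $W$; for each $v\in W\setminus I$, infected at step $s\ge1$, we may select $r$ of its neighbours in $V_{s-1}$, including at least one in $V_{s-1}\setminus V_{s-2}$. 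A short induction on the level shows that in the resulting subgraph $\phi$ each vertex is reinfected at exactly its original step, so $\phi$ is minimally susceptible (its $r(k-r)$ chosen edges are distinct, since an edge cannot be a ``parent edge'' for vertices on two different levels) with contagious set $I$ and top level $W\setminus W'$. Moreover no vertex of $[n]\setminus W$ has $r$ neighbours in $W'$, since otherwise it would lie in $V_t$. Hence $\{I\text{ is }(k,i)\text{-contagious}\}$ is contained in the union, over all triples $(W,W',\phi)$ of this combinatorial type, of $\{\phi\subseteq G\}\cap\{\text{no }v\in[n]\setminus W\text{ has }\ge r\text{ neighbours in }W'\}$.

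\emph{Next I would exploit independence.} These two events depend on disjoint sets of edges --- those inside $W$, and those joining $W'$ to $[n]\setminus W$ --- so they are independent, with $\P(\phi\subseteq G)=p^{r(k-r)}=(\eps/n)^{k-r}$ and
\[
  \P\bigl(\text{all }v\in[n]\setminus W\text{ have }<r\text{ neighbours in }W'\bigr)
  =\P\bigl(\mathrm{Bin}(k-i,p)\le r-1\bigr)^{\,n-k}
  \le e^{-(n-k)\binom{k-i}{r}p^r(1-p)^{k-i-r}}.
\]
Since $(n-k)p^r=\eps(1-k/n)$, the exponent equals $\eps\binom{k-i}{r}(1-k/n)(1-p)^{k-i-r}$, and the hypothesis $k\le n^{1/(r(r+1))}$ forces $kp\to0$, $k^{r+1}p\to0$ and $k^{r+1}/n\to0$, whence
\[
  \eps\binom{k-i}{r}\Bigl(1-(1-k/n)(1-p)^{k-i-r}\Bigr)
  \le\frac{\eps}{r!}\bigl(k^{r+1}/n+k^{r+1}p\bigr)=o(1)
\]
uniformly over $i\le k-r$. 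Therefore the exclusion probability is at most $(1+o(1))\,e^{-\eps\binom{k-i}{r}}$, uniformly in $i,k$.

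\emph{Finally I would count the triples and combine.} There are $\binom{n-r}{k-r}$ choices of $W\supseteq I$, and for each $W$ the number of compatible pairs $(W',\phi)$ equals $m_r(k,i)$: one chooses the $i$-element top level and then a minimally susceptible structure with that prescribed top level, and the factors $\binom{k-r}{i}$ and $m_r(k,i)/\binom{k-r}{i}$ cancel, mirroring the shape of \eqref{E_mrec}. Using also $\binom{n-r}{k-r}=(1+o(1))\,n^{k-r}/(k-r)!$ --- again uniform, since $k^2/n\to0$ --- the union bound gives
\[
  P_r(k,i)\le(1+o(1))\,\frac{n^{k-r}}{(k-r)!}\,m_r(k,i)\Bigl(\frac{\eps}{n}\Bigr)^{k-r}e^{-\eps\binom{k-i}{r}}
  =(1+o(1))\,\frac{e^{-\eps\binom{k-i}{r}}\eps^{k-r}}{(k-r)!}\,m_r(k,i),
\]
as claimed. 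I expect the main obstacle to be the uniformity in the middle step: the exclusion estimate and the binomial-coefficient asymptotics must carry error terms that are $o(1)$ \emph{independently} of $i$ and $k$, and it is precisely the constraint $k\le n^{1/(r(r+1))}$ that makes $kp$, $k^{r+1}p$ and $k^2/n$ tend to $0$ fast enough to guarantee this.
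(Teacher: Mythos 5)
Your proposal is correct and follows essentially the same route as the paper: a union bound over minimally susceptible witness subgraphs on $k$ vertices with $i$ in the top level (counted by $\binom{n}{k-r}\,m_r(k,i)$, each present with probability $p^{r(k-r)}$), multiplied by the probability that no outside vertex has $r$ neighbours below the top level, with the constraint $k\le n^{1/(r(r+1))}$ used exactly as you describe to make the resulting correction factors $1+o(1)$ uniformly in $i,k$. The only cosmetic differences are that you make the edge-disjointness/independence of the two events explicit and keep $(1-p)^{k-i-r}$ where the paper first weakens it to $(1-p)^k$; both lead to the same estimate.
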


\begin{proof}
Let $I\subset[n]$, with $|I|=r$, be given, and put 
\[
\ell_{r}(k,i)=\frac{e^{-\eps{k-i\choose r}}\eps^{k-r}}{(k-r)!}m_{r}(k,i)
\]
so that the lemma states $P_r(k,i)\le(1+o(1))\ell_{r}(k,i)$.
  This follows 
  by a union bound:  
  If $I$ is $(k,i)$-contagious, 
then $I$ is a contagious set for a minimally susceptible subgraph  
$G\subset \Gnp$ (perhaps not induced) of size $k$ with $i$ vertices 
infected in the top level, 
and all vertices in $v\in V(G)^c$ are connected to at 
most $r-1$ vertices below the top level of $G$ (so that 
$V(G)=V_t(I,\Gnp)$, for some $t$).
There are $\binom{n}{k-r}$ choices for the vertices of $G$
and $m_r(k,i)$ choices for its edges.  
For any such $v$ and $G$, the probability
that $v$ is connected to $r$ vertices below the top level of $G$
is bounded from below by
\[
{k-i\choose r}p^r(1-p)^{k-i-r}
>
{k-i\choose r}p^r(1-p)^{k}.
\]
Hence
\[
P_{r}(k,i)<
{n\choose k-r}m_{r}(k,i)p^{r(k-r)}
\left(1-{k-i\choose r}p^r(1-p)^k\right)^{n-k}.
\]

By the inequalities ${n\choose k}\le n^k/k!$ and $1-x<e^{-x}$, 
it follows that 
\[
\log \frac{P_{r}(k,i)}{\ell_{r}(k,i)}
<
\eps{k-i\choose r}\left(1-(1-p)^k\left(1-\frac{k}{n}\right)\right).
\]
By the inequality $(1-x)^y\ge1-xy$, and since $k\le n^{1/(r(r+1))}$, 
the right hand side is bounded by 
\[
\eps k^{r+1}(p+(1-pk)/n)\le\eps n^{1/r}(p+1/n)\ll1
\]
as $n\to\infty$.
Hence $P_{r}(k,i)\le (1+o(1))\ell_{r}(k,i)$, 
as claimed. 
\end{proof}

As a corollary we get a bound for $E_r(k,i)$.

\begin{lem}\label{L_Eki}
  Let $\alpha,\beta_0>0$.  Put $p=\vartheta_r(\alpha,n)$.  For all
  $k=\beta\log{n}$ and $i=\gamma k$, such that $\beta\le \beta_0$, we
  have that
  \[
    E_{r}(k,i) \lesssim n^{\mu} \log^{r(r-1)}n
  \]
  where
  \begin{equation}\label{E_mu}
    \mu = \mu_r(\alpha,\beta,\gamma)
    =
    r+
    \beta\log\left(\frac{\alpha\beta^{r-1}}{(r-1)!}\right)
    -\frac{\alpha\beta^r}{r!}(1-\gamma)^r
    -\beta(r-2+\gamma).
  \end{equation}
  Here $\lesssim$ denotes inequality up to a constant depending on
  $\alpha,\beta_0$, but not on $\beta,\gamma$.
\end{lem}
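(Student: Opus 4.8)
The plan is to combine the per-set bound of Lemma~\ref{L_Pki} with the counting bound of Lemma~\ref{L_mUB}, and then read off the exponent of $n$ after substituting the prescribed scalings. By linearity of expectation and the vertex-symmetry of $\Gnp$, $E_r(k,i) = \binom{n}{r}\,P_r(k,i)$. Since $k=\beta\log n$ with $\beta\le\beta_0$, the constraint $k\le n^{1/(r(r+1))}$ of Lemma~\ref{L_Pki} holds once $n$ is large, so that lemma gives
\[
  E_r(k,i) \le (1+o(1))\,\binom{n}{r}\,\frac{e^{-\eps\binom{k-i}{r}}\eps^{k-r}}{(k-r)!}\,m_r(k,i).
\]
Now I insert Lemma~\ref{L_mUB}: the factor $(k-r)!$ cancels, the factor $\bigl(k^{r-1}/(r-1)!\bigr)^k$ merges with $\eps^{k}$, and $i^{-1/2}\le 1$. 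Using also $\binom{n}{r}\le n^r$, this reduces the bound to
\[
  E_r(k,i) \lesssim n^r\,\eps^{-r}\left(\frac{\eps k^{r-1}}{(r-1)!}\right)^{k} e^{-\eps\binom{k-i}{r}}\,e^{-i-(r-2)k}.
\]

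It remains to substitute $\eps=\alpha/\log^{r-1}n$, $k=\beta\log n$, $i=\gamma k$ and identify each factor. The factor $\eps^{-r}=\alpha^{-r}\log^{r(r-1)}n$ produces exactly the polylogarithmic term in the statement (with $\alpha^{-r}$ absorbed into $\lesssim$). Since $\eps k^{r-1}=\alpha\beta^{r-1}$ (the powers of $\log n$ cancel), we get $\bigl(\eps k^{r-1}/(r-1)!\bigr)^k=n^{\beta\log(\alpha\beta^{r-1}/(r-1)!)}$, and $e^{-i-(r-2)k}=n^{-\beta(r-2+\gamma)}$. For the remaining factor, $k-i=(1-\gamma)\beta\log n$, and I will use $\binom{m}{r}\le m^r/r!$ together with $m^r/r!-\binom{m}{r}\le\binom{r}{2}m^{r-1}/r!$ to write $e^{-\eps\binom{k-i}{r}}\lesssim e^{-\eps(k-i)^r/r!}=n^{-\alpha\beta^r(1-\gamma)^r/r!}$. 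Collecting the four exponents of $n$ gives precisely $\mu_r(\alpha,\beta,\gamma)$ of \eqref{E_mu}, with the surviving $\log^{r(r-1)}n$, as claimed.

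The main thing to watch is uniformity: each error absorbed into $\lesssim$ — the $(1+o(1))$ of Lemma~\ref{L_Pki} (which does not depend on $i,k$), the constants from $\binom{n}{r}\le n^r$ and $i^{-1/2}\le 1$ (valid since $i\ge 1$; when $i<1$ there is nothing to prove), the factor $\alpha^{-r}$, and the discrepancy $\exp\bigl(\eps(k-i)^r/r!-\eps\binom{k-i}{r}\bigr)$ — must be bounded uniformly over the admissible pairs $(\beta,\gamma)$, not merely for each fixed $\beta$. Only the last of these involves $\beta,\gamma$, and it is here that the hypothesis $\beta\le\beta_0$ enters: it forces $k-i\le\beta_0\log n$, whence $\eps\bigl((k-i)^r/r!-\binom{k-i}{r}\bigr)\le\tfrac{\alpha}{\log^{r-1}n}\cdot\tfrac{\binom{r}{2}}{r!}(\beta_0\log n)^{r-1}$ is bounded by a constant depending only on $\alpha$ and $\beta_0$. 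Hence the implied constant in $\lesssim$ depends on $\alpha,\beta_0$ alone, as the statement requires.
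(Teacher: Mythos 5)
Your proposal is correct and follows exactly the paper's argument: write $E_r(k,i)=\binom{n}{r}P_r(k,i)$, insert the bounds of Lemmas~\ref{L_Pki} and~\ref{L_mUB} (dropping the harmless $i^{-1/2}$ factor), and substitute $\eps=\alpha/\log^{r-1}n$, $k=\beta\log n$, $i=\gamma k$ to read off $n^{\mu}\log^{r(r-1)}n$. Your explicit verification that replacing $\binom{k-i}{r}$ by $(k-i)^r/r!$ in the exponent costs only a constant depending on $\alpha,\beta_0$ is a detail the paper leaves implicit, and it is handled correctly.
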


\begin{proof}
Let $r\ge2$ and $\alpha,\beta_0>0$ be given. 
Put $\eps=np^r$. 
By 
Lemmas~\ref{L_mUB},\ref{L_Pki}, 
for all 
$k=\beta\log{n}$ and $i=\gamma k$, 
with $\beta\le\beta_0$, 
we have that 
\[ 
E_{r}(k,i)\le (1+o(1)){n\choose r}\left(\frac{\eps k^{r-1}}{(r-1)!}\right)^k
\eps^{-r}
e^{-i-(r-2)k-\eps{k-i\choose r}}
\lesssim 
n^{\mu}\log^{r(r-1)}n.
\]
The $\sqrt{i}$ term from Lemma~\ref{L_mUB} is safely dropped for this upper bound.  
\end{proof}

\subsection{Sub-critical bounds}
\label{S_sub-rperc}

In this section, we prove Proposition~\ref{P_SC}.

The case of $\gamma=0$ in Lemma~\ref{L_Eki} 
(corresponding to values of $i$ such that
$i/k\ll1$) is of particular importance for the growth of sub-critical
$r$-percolations.  For this reason, we introduce the notation
$\mu^*(\alpha,\beta) = \mu(\alpha,\beta,0)$.  
The next result in particular shows that 
$\beta_*(\alpha)$, as in Proposition~\ref{P_SC}, 
is well-defined. 

\begin{lem}\label{L_beta*}
  Let $\alpha>0$. 
  Let $\alpha_r$   be as in Proposition~\ref{P_SC}. 
  Put 
  \[
  \beta_r(\alpha)=\left(\frac{(r-1)!}{\alpha}\right)^{1/(r-1)}.
  \]
  \begin{enumerate}[nolistsep,label={(\roman*)}]
  \item The function $\mu_r^*(\alpha,\beta)$ is decreasing in $\beta$,
    with a unique zero at $\beta_*(\alpha)$.
  \item
    We have that 
    \[
      \mu_r^*(\alpha,\beta_r(\alpha)) = r-\beta_r(\alpha)\frac{(r-1)^2}{r}
    \]
    and hence 
    $\beta_*(\alpha)=\beta_r(\alpha)$ (resp.\ $>$ or $<$) if
    $\alpha=\alpha_r$ (resp.\ $>$ or $<$).
  \end{enumerate}
\end{lem}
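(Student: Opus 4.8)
The plan is to treat $\mu_r^*(\alpha,\cdot)$ as a smooth function of $\beta>0$ and analyze it through its derivative. Writing $u=u(\beta)=\alpha\beta^{r-1}/(r-1)!$, a direct computation gives
\[
  \frac{\partial}{\partial\beta}\mu_r^*(\alpha,\beta)=\log u+1-u,
\]
since differentiating $\beta\log(\beta^{r-1})$ contributes a constant $r-1$, which combines with the $-(r-2)$ from the linear term to leave $+1$, while $\partial_\beta(\alpha\beta^r/r!)=u$. The elementary inequality $\log u\le u-1$, with equality only at $u=1$, shows this derivative is nonpositive and vanishes solely where $u=1$, i.e.\ $\beta=\beta_r(\alpha)$; hence $\mu_r^*(\alpha,\cdot)$ is strictly decreasing on $(0,\infty)$. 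Since $\beta\log(\beta^{r-1})\to0$ as $\beta\to0^+$ we have $\mu_r^*(\alpha,\beta)\to r>0$, and the $-\alpha\beta^r/r!$ term forces $\mu_r^*(\alpha,\beta)\to-\infty$ as $\beta\to\infty$. By continuity and strict monotonicity there is a unique zero $\beta_*(\alpha)$, which proves (i).

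For (ii), I would evaluate $\mu_r^*$ at $\beta=\beta_r(\alpha)$, where by definition $u=1$, so the logarithmic term vanishes and $\alpha\beta_r^r/r!=(\beta_r/r)u=\beta_r/r$. Substituting,
\[
  \mu_r^*(\alpha,\beta_r(\alpha))=r-\frac{\beta_r(\alpha)}{r}-\beta_r(\alpha)(r-2)=r-\beta_r(\alpha)\frac{(r-1)^2}{r},
\]
as claimed. This quantity is positive, zero, or negative exactly when $\beta_r(\alpha)<r^2/(r-1)^2$, $=$, or $>$; and since $\beta_r(\alpha)=((r-1)!/\alpha)^{1/(r-1)}$ is strictly decreasing in $\alpha$, the equality $\beta_r(\alpha)=r^2/(r-1)^2$ holds precisely at $\alpha=(r-1)!\,((r-1)/r)^{2(r-1)}=\alpha_r$ (using $(r^2/(r-1)^2)^{-(r-1)}=((r-1)/r)^{2(r-1)}$). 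Hence $\mu_r^*(\alpha,\beta_r(\alpha))$ is $0$, $>0$, or $<0$ according as $\alpha=\alpha_r$, $\alpha>\alpha_r$, or $\alpha<\alpha_r$.

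Finally, I would combine this with (i): because $\mu_r^*(\alpha,\cdot)$ is strictly decreasing with its unique zero at $\beta_*(\alpha)$, the sign of $\mu_r^*(\alpha,\beta_r(\alpha))$ locates $\beta_r(\alpha)$ relative to $\beta_*(\alpha)$ --- positive forces $\beta_r(\alpha)<\beta_*(\alpha)$, zero forces equality, negative forces $\beta_r(\alpha)>\beta_*(\alpha)$. Together with the previous paragraph this yields the stated trichotomy. The argument is essentially mechanical; the only steps requiring a little care are the cancellation that produces the clean form $\log u+1-u$ for the derivative, and the bookkeeping identifying the sign threshold for $\beta_r(\alpha)$ with the closed form of $\alpha_r$. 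I do not anticipate a genuine obstacle --- the lemma is a preparatory step that isolates the critical value $\alpha_r$ from the asymptotic exponent $\mu_r^*$.
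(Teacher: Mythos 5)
Your proposal is correct and follows essentially the same route as the paper: compute $\partial_\beta\mu_r^*=1+\log u-u$ with $u=\alpha\beta^{r-1}/(r-1)!$, use $\log u\le u-1$ to get strict monotonicity, take the limits at $0^+$ and $\infty$ for existence and uniqueness of the zero, evaluate at $u=1$ to get the closed form in (ii), and read off the trichotomy from the monotonicity of $\beta_r(\alpha)$ in $\alpha$ together with $\beta_r(\alpha_r)=(r/(r-1))^2$. No gaps.
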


The quantity $\beta_*(\alpha)$ also plays a 
crucial role in analyzing the 
growth of super-critical $r$-percolations on $\Gnp$, see
Section~\ref{S_term} below.

\begin{proof}
For the first claim, 
we note that by setting $\gamma=0$
in \eqref{E_mu} we obtain
\begin{equation}\label{E_mu*}
\mu_r^*(\alpha,\beta)
=
r+
\beta\log\left(\frac{\alpha\beta^{r-1}}{(r-1)!}\right)
-\frac{\alpha\beta^r}{r!}
-\beta(r-2).
\end{equation}

Therefore 
\[
\frac{\partial}{\partial\beta}\mu_r^*(\alpha,\beta)
=1
+\log\left(\frac{\alpha\beta^{r-1}}{(r-1)!}\right)
-\frac{\alpha\beta^{r-1}}{(r-1)!}.
\] 
Since 
$\alpha\beta_r(\alpha)^{r-1}/(r-1)!=1$,  
the above expression
is equal to 0 at $\beta=\beta_r(\alpha)$
and negative for all other $\beta>0$.
Hence $\mu_*(\alpha,\beta)$
is decreasing in $\beta$, as claimed.
Moreover, since $\lim_{\beta\to0^+}\mu_r^*(\alpha,\beta)=r$
and $\lim_{\beta\to\infty}\mu_r^*(\alpha,\beta)=-\infty$, 
$\beta_*(\alpha)$ is well-defined. 

We obtain the expression for $\mu_r^*(\alpha,\beta_r(\alpha))$ 
in the second claim
by \eqref{E_mu*}
and the equality $\alpha\beta_r(\alpha)^{r-1}/(r-1)!=1$. 
The conclusion
of the claim thus follows by the first claim, 
noting that  $\beta_r(\alpha)$ is decreasing in 
$\alpha$ and 
$\mu_r^*(\alpha_r,\beta_r(\alpha_r))=0$
since 
$\beta_r(\alpha_r)=(r/(r-1))^2$. 
\end{proof}
 
We are ready to prove the main result of this section. 

\begin{proof}[Proof of Proposition~\ref{P_SC}]
Let $\alpha<\alpha_r$ and $\delta>0$ be given. 
First, we show that with high probability, $\Gnp$ contains
no $m$-contagious set, for  $m=\beta\log{n}$
with $\beta\in[\beta_*(\alpha)+\delta,\beta_r(\alpha)]$. 

\begin{claim}\label{C_smallbeta}
For all $\beta\le\beta_r(\alpha)$, we have that
$\mu_r(\alpha,\beta,\gamma)\le \mu_r^*(\alpha,\beta)$. 
\end{claim}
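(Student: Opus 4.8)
The plan is to compare the two expressions head-on. Subtracting the formula \eqref{E_mu*} for $\mu_r^*(\alpha,\beta)$ from the formula \eqref{E_mu} for $\mu_r(\alpha,\beta,\gamma)$, the terms $r$ and $\beta\log(\alpha\beta^{r-1}/(r-1)!)$ cancel identically, leaving
\[
\mu_r^*(\alpha,\beta) - \mu_r(\alpha,\beta,\gamma)
= \frac{\alpha\beta^r}{r!}\bigl((1-\gamma)^r - 1\bigr) + \beta\gamma.
\]
Thus the claim is equivalent to the inequality $\beta\gamma \ge \frac{\alpha\beta^r}{r!}\bigl(1-(1-\gamma)^r\bigr)$, and, dividing through by $\beta>0$, to
\[
\gamma \ge \frac{\alpha\beta^{r-1}}{r!}\bigl(1-(1-\gamma)^r\bigr).
\]

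To establish this, I would bound the right-hand side using Bernoulli's inequality. Recall $\gamma = i/k$ with $i \le k-r$, so $\gamma \in [0,1]$; hence $(1-\gamma)^r \ge 1 - r\gamma$, that is $1-(1-\gamma)^r \le r\gamma$, and therefore the right-hand side is at most $\frac{\alpha\beta^{r-1}}{(r-1)!}\,\gamma$. Finally, the hypothesis $\beta \le \beta_r(\alpha)$, combined with the defining identity $\alpha\,\beta_r(\alpha)^{r-1}/(r-1)! = 1$ and the monotonicity of $\beta \mapsto \beta^{r-1}$, gives $\alpha\beta^{r-1}/(r-1)! \le 1$, so the right-hand side is at most $\gamma$, which is exactly what is needed.

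I do not anticipate a genuine obstacle here: the argument is essentially one application of Bernoulli's inequality. The only point requiring a moment's care is verifying that $\gamma$ lies in $[0,1]$ so that Bernoulli is applied in the correct direction, which is immediate from $i \le k-r$. Conceptually, the content of the claim is simply that in the regime $\beta \le \beta_r(\alpha)$ — where $\alpha\beta^{r-1}/(r-1)! \le 1$ — the gain coming from the factor $(1-\gamma)^r$ in the penultimate term of $\mu_r$ is always dominated by the corresponding linear-in-$\gamma$ loss in its final term, so that $\gamma=0$ is the worst case.
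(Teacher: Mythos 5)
Your proof is correct. The algebra is right: subtracting \eqref{E_mu*} from \eqref{E_mu} does leave exactly $\frac{\alpha\beta^r}{r!}\bigl((1-\gamma)^r-1\bigr)+\beta\gamma$, the relevant $\gamma=i/k$ always lies in $[0,1]$, Bernoulli's inequality gives $1-(1-\gamma)^r\le r\gamma$, and the hypothesis $\beta\le\beta_r(\alpha)$ together with $\alpha\beta_r(\alpha)^{r-1}/(r-1)!=1$ closes the argument. The paper proceeds differently: it computes $\frac{\partial}{\partial\gamma}\mu_r(\alpha,\beta,\gamma)=-\beta\bigl(1-\frac{\alpha\beta^{r-1}}{(r-1)!}(1-\gamma)^{r-1}\bigr)$, observes this is $\le 0$ for $\beta\le\beta_r(\alpha)$ (writing $\beta=\xi\beta_r(\alpha)$ with $\xi\le1$), and also records the concavity $\frac{\partial^2}{\partial\gamma^2}\mu_r<0$; monotonicity in $\gamma$ then gives $\mu_r(\alpha,\beta,\gamma)\le\mu_r(\alpha,\beta,0)=\mu_r^*(\alpha,\beta)$. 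The two arguments are close cousins — integrating the paper's derivative bound from $0$ to $\gamma$ essentially reproduces your inequality, and Bernoulli is the discrete shadow of that derivative estimate — but yours is more elementary (no calculus) and isolates the exact nonnegative deficit $\mu^*-\mu$, while the paper's version yields the slightly stronger statement that $\mu_r$ is monotone decreasing in $\gamma$ throughout $(0,1)$, a fact of the same flavour that it reuses elsewhere (e.g.\ in the analysis of $\mu_{r,\eps}$ in Appendix~\ref{A_L_mueps}). For the claim as stated, either route is fully adequate.
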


This is proved in Appendix~\ref{A:C_smallbeta}.

By Lemmas~\ref{L_Eki},\ref{L_beta*} and Claim~\ref{C_smallbeta}, we find by summing over all $O(\log{n})$ relevant $k$
that the probability that such a set exists is bounded (up to a constant) by 
\[
n^{\mu_*(\alpha,\beta_*(\alpha)+\delta)}\log^{r(r-1)+1}n\ll1.
\]

It thus remains to show that with high probability, 
$\Gnp$ has no $m$-contagious set $I$, for some 
$m\geq \beta_r\log n$. To this end, note that if 
such a set $I$ exists, then 
there is some $t$ so that
  \[
    |V_t(I,\Gnp)| < \beta_r \log n \leq |V_{t+1}(I,\Gnp)|  
  \]
  Letting $k=|V_t(I,\Gnp)|$, we find that for some $k<\beta_r \log n$
  there is a $k$-contagious set $I$, with $m-k$ further vertices with $r$
  neighbours in $V_t(I,\Gnp)$.  

  The expected number of $k$-contagious sets with $i$ vertices 
  infected in the
  top level is $E_r(k,i)$.  Let $p_r(k,i)$ be the probability that for a
  given set of size $k$ with $i$ vertices identified as the top level,
  there are at least $\beta_r \log n - k$ vertices $r$- connected to the
  set with at least one neighbour in the top level. Hence the
  probability that $\Gnp$ has a $m$-contagious set $I$ for some
  $m\geq \beta_r\log n$ is at most
  \[
    \sum_{i<k<\beta_r(\alpha) \log n} E_r(k,i) p_r(k,i).
  \]
  The proposition now follows from the following claim,
  proved in Appendix~\ref{A:Epmax}. 

\begin{claim}\label{C:Epmax}
For all $k<\beta_r(\alpha)\log{n}$ and $i\le k-r$, we have that 
\[
E_{r}(k,i)p_{r}(k,i)\lesssim
n^{\mu_r^*(\alpha,\beta_r(\alpha))}\log^{r(r-1)}n
\]
where $\lesssim$ denotes inequality up to constant, 
independent of $i,k$.
\end{claim}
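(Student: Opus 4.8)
The plan is to bound $p_r(k,i)$ by a binomial tail estimate, combine it with Lemma~\ref{L_Eki}, and thereby reduce Claim~\ref{C:Epmax} to a deterministic inequality in two real variables which is tight at a single point.

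\emph{Step 1: bounding $p_r$.} Fix a set $S$ of size $k$ with a distinguished subset of $i$ vertices (the nominal top level), and put $M=\lceil\beta_r(\alpha)\log n\rceil-k$; since the relevant sum in the proof of Proposition~\ref{P_SC} ranges over $k<\beta_r(\alpha)\log n$, we have $M\ge1$. For a vertex $v\notin S$, the event that $v$ is $r$-connected to $S$ with at least one neighbour in the distinguished subset depends only on the edges from $v$ to $S$; these edge sets are disjoint for distinct $v$, so the events are independent over $v$. Hence the number of such vertices is $\mathrm{Bin}(n-k,q)$, and, taking the leading ($r$-neighbour) contribution, $q\le(1+o(1))a_r(k,i)p^r$ with $a_r$ as in \eqref{E_a}. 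Therefore $p_r(k,i)\le\min\bigl\{1,\ \binom{n}{M}q^M\bigr\}\le\min\bigl\{1,\ (enq/M)^M\bigr\}$.

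\emph{Step 2: reduction to a calculus inequality.} Write $k=\beta\log n$ and $i=\gamma k$, recalling $\beta\le\beta_r(\alpha)$. By Lemma~\ref{L_Eki}, $E_r(k,i)\lesssim n^{\mu_r(\alpha,\beta,\gamma)}\log^{r(r-1)}n$; and by Claim~\ref{C_a} (bounding the resulting decreasing sum by its integral), $nq\le(1+o(1))\tfrac{\alpha\beta^r}{r!}(1-(1-\gamma)^r)\log n$, while $M=(\beta_r(\alpha)-\beta)\log n+O(1)$. The decisive point is that in $M\log(nq/M)$ the $\log\log n$ terms cancel, so the exponent of $n$ in $E_r(k,i)p_r(k,i)$ is, up to harmless errors, at most $G_r(\alpha,\beta,\gamma)$, where
\[
  G_r(\alpha,\beta,\gamma)=\mu_r(\alpha,\beta,\gamma)+\min\Bigl\{0,\ (\beta_r(\alpha)-\beta)\Bigl[1+\log\tfrac{\frac{\alpha\beta^r}{r!}(1-(1-\gamma)^r)}{\beta_r(\alpha)-\beta}\Bigr]\Bigr\}.
\]
(For $M=O(1)$, so $\beta$ within $O(1/\log n)$ of $\beta_r(\alpha)$, one instead uses $p_r(k,i)\le1$ together with $\mu_r(\alpha,\beta,\gamma)\le\mu_r^*(\alpha,\beta)\le\mu_r^*(\alpha,\beta_r(\alpha))+o(1)$ from Claim~\ref{C_smallbeta} and the flatness of $\mu_r^*$ at $\beta_r(\alpha)$; this corner is harmless.) It thus suffices to show $G_r(\alpha,\beta,\gamma)\le\mu_r^*(\alpha,\beta_r(\alpha))$ for all $0<\beta\le\beta_r(\alpha)$ and $0\le\gamma\le1$.

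\emph{Step 3: the calculus inequality.} Substituting $u=\beta/\beta_r(\alpha)\in(0,1]$, and using $\alpha\beta_r(\alpha)^{r-1}=(r-1)!$ together with $\mu_r^*(\alpha,\beta_r(\alpha))=r-\beta_r(\alpha)(r-1)^2/r$ from Lemma~\ref{L_beta*}(ii), reduces this to a two-variable inequality. One treats two regimes. Where the minimum in $G_r$ equals $0$ — i.e.\ $e\tfrac{\alpha\beta^r}{r!}(1-(1-\gamma)^r)\ge\beta_r(\alpha)-\beta$ — one needs only $\mu_r(\alpha,\beta,\gamma)\le\mu_r^*(\alpha,\beta_r(\alpha))$; since $\mu_r(\alpha,\beta,\cdot)$ is non-increasing in $\gamma$ for $\beta\le\beta_r(\alpha)$ (by the computation underlying Claim~\ref{C_smallbeta}), it is enough to check this on the boundary of the regime, where $\gamma$ takes its least admissible value $\gamma_0(\beta)=1-\bigl(1-\tfrac{(\beta_r(\alpha)-\beta)r!}{e\alpha\beta^r}\bigr)^{1/r}$; this is a one-variable inequality in $\beta$, tight at $\beta=\beta_r(\alpha)$. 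Where the minimum is the bracketed term, differentiating in $\gamma$ shows $G_r(\alpha,\beta,\cdot)$ is maximised over $0<\gamma<\gamma_0(\beta)$ either at an interior critical point or at $\gamma_0(\beta)$, in both cases reducing to an estimate subsumed by the first regime. Equality holds exactly at $(\beta,\gamma)=(\beta_r(\alpha),0)$, matching $\mu_r(\alpha,\beta_r(\alpha),0)=\mu_r^*(\alpha,\beta_r(\alpha))$. Summing the resulting bound over the $O(\log^2 n)$ pairs $(k,i)$ then yields the bound needed in the proof of Proposition~\ref{P_SC}, since $\mu_r^*(\alpha,\beta_r(\alpha))<0$ for $\alpha<\alpha_r$.

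\emph{Main obstacle.} The delicate part is Step 3. Because the inequality is tight at $(\beta_r(\alpha),0)$ there is no slack to discard, and the non-smooth $\min$ — encoding the dichotomy between ``$p_r(k,i)=\Theta(1)$, so the bound on $E_r(k,i)$ must suffice by itself'' and ``$p_r(k,i)$ is genuinely small and contributes a saving'' — forces the extremal $\gamma$, for fixed $\beta$, to lie on the transition curve $e\tfrac{\alpha\beta^r}{r!}(1-(1-\gamma)^r)=\beta_r(\alpha)-\beta$ rather than at an endpoint, so the one-variable reduction has to be carried out precisely along that curve. Everything upstream — the independence observation in Step 1 and the cancellation of the $\log\log n$ terms in Step 2 — is comparatively routine.
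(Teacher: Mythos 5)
Your Steps 1 and 2 reproduce the paper's argument: the paper bounds $p_r(k,i)\le\binom{n}{\ell_\beta}(a_r(k,i)p^r)^{\ell_\beta}$ with $\ell_\beta=(\beta_r(\alpha)-\beta)\log n$, applies $\binom{n}{\ell}\le(ne/\ell)^\ell$ and Claim~\ref{C_a}, and combines with Lemma~\ref{L_Eki} to reduce Claim~\ref{C:Epmax} to showing $\min\{\mu_r(\alpha,\beta,\gamma),\bar\mu_r(\alpha,\beta,\gamma)\}\le\mu_r^*(\alpha,\beta_r(\alpha))$ for a correction term of exactly your form $G_r$ (the paper uses the cruder bound $a_r(k,i)<ik^{r-1}/(r-1)!$, so its logarithm contains $\gamma$ where yours has $(1-(1-\gamma)^r)/r$; this is immaterial). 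The gap is Step 3, which you correctly identify as the crux but do not prove. The assertions ``this is a one-variable inequality in $\beta$, tight at $\beta=\beta_r(\alpha)$'' and ``in both cases reducing to an estimate subsumed by the first regime'' \emph{are} the content of the claim; because equality holds at $(\beta,\gamma)=(\beta_r(\alpha),0)$ there is no slack, and each reduction is a genuine inequality requiring verification. Concretely: (i) you never establish $\mu_r(\alpha,\beta,\gamma_0(\beta))\le\mu_r^*(\alpha,\beta_r(\alpha))$ along the transition curve --- note that at $\gamma=0$ the \emph{reverse} inequality $\mu_r(\alpha,\beta,0)=\mu_r^*(\alpha,\beta)\ge\mu_r^*(\alpha,\beta_r(\alpha))$ holds for every $\beta\le\beta_r(\alpha)$ by Lemma~\ref{L_beta*}(i), so the decrease of $\mu_r$ in $\gamma$ between $0$ and $\gamma_0(\beta)$ must be compared quantitatively against the deficit $\mu_r^*(\alpha,\beta)-\mu_r^*(\alpha,\beta_r(\alpha))$; (ii) the interior-critical-point case in the second regime is not analysed at all; (iii) when $e\alpha\beta^r/r!<\beta_r(\alpha)-\beta$ the first regime is empty and $\gamma_0(\beta)$ does not exist, so the second regime must then be handled on all of $(0,1]$ without appeal to the first.

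For comparison, the paper's proof of the corresponding two-variable inequality (Claim~\ref{C_barmu}) is organised differently, and this is where essentially all of the work lies: writing $\beta=\delta\beta_r(\alpha)$, it shows that for fixed $\gamma$ the function $\mu_r$ is convex and $\bar\mu_r$ is concave in $\delta$ (Sub-claim~\ref{SC_mumubar}), then verifies three pointwise inequalities at the specific value $\delta_\gamma=1-\sqrt{\gamma/r}$ (Sub-claim~\ref{SC_mudelgam}), and concludes from these together with the limits as $\delta\to0^+$ and $\delta\to1^-$. Those sub-claims occupy several pages of delicate elementary estimates. Your alternative route --- monotonicity and critical points in $\gamma$ at fixed $\beta$, with the extremum pushed onto the transition curve --- may well be workable, but as written it replaces the hard part of the proof with the statement that it can be done.
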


Indeed, by Claim~\ref{C:Epmax}, it follows, 
by summing over all $O(\log^2n)$ relevant $i,k$, 
that the probability that 
$\Gnp$ has an $m$-contagious set for some 
$m\ge\beta_r(\alpha)\log{n}$ 
is bounded (up to a constant) by 
\[
n^{\mu_r^*(\alpha,\beta_r)}\log^{r(r-1)+2}n
\ll1
\]
where the last inequality follows by Lemma~\ref{L_beta*}, 
since $\alpha<\alpha_r$ and hence 
$\mu_r^*(\alpha,\beta_r(\alpha) )<0$.
\end{proof}

\section{Upper bound for $p_c(n,r)$}\label{S_pcUB}

In this section, we prove Theorem~\ref{T_main}.  In light of Proposition~\ref{P_SC}, it
remains to prove that for $\alpha>\alpha_r$, with high probability
$\Gnp$ is susceptible.  Fundamentally this is done using the second
moment method.  
As discussed in the introduction, the main obstacle is showing that
contagious sets are sufficiently independent for the second moment method
to apply. 
To this end, we restrict to a
special type of contagious sets, which infect $k$ vertices with no
triangles.

As in the previous section, we fix $r\ge2$ throughout.

\subsection{Triangle-free susceptible graphs}\label{S_hat:m}

Recall that a graph is called \emph{triangle-free} if it contains no
subgraph which is isomorphic to $K_3$.

\begin{defn}
  Let $\hat m_r(k,i)$ denote the number of triangle-free graphs that
  contribute to $m_r(k,i)$ (see Section~\ref{S_m}).  Put
  $\hat m_r(k)=\sum_{i=1}^{k-r}\hat m_r(k,i)$.
\end{defn}

Following
Section~\ref{S_m}, we obtain a recursive lower bound for
$\hat m_r(k,i)$.  We note that $\hat m_r(k,k-r) = m_r(k,k-r) = 1$.  
For $i<k-r$ we claim that 
\begin{equation}\label{E_hat:mrec} 
\hat m_r(k,i) \ge {k-r\choose i}\sum_{j=1}^{k-r-i} \hat
a_r(k-i,j)^i \hat m_r(k-i,j)
\end{equation}
where 
\begin{equation}\label{E_hat:a} 
\hat a_r(x,y)=\max\{0,a_r(x,y)-2ryx^{r-2}\}.  
\end{equation}
Note that (in contrast to the recursion for
$m(k,i)$), this is only a lower bound.
To see \eqref{E_hat:mrec}, we argue that of the $a_r(k-i,j)$ ways to
connect a vertex in the top level to lower levels, at most $2rj(k-i)^{r-2}$
create a triangle.  This is so since the number of ways of choosing $r$
vertices from $k-i$, including at least one of the top $j$ and including at
least one edge is at most
\[
jr{k-i-2\choose r-2}+jr(k-i-r){k-i-3\choose r-3}
<2jr(k-i)^{r-2},
\] 
where the first term accounts for an edge including the previous top level
and the second term to $r$ vertices including an edge below the previous
top level.

Setting 
\[
\hat \sigma_{r}(k,i) 
= 
\frac{\hat m_{r}(k,i)}{(k-r)!}\left(\frac{(r-1)!}{k^{r-1}}\right)^k,
\] 
\eqref{E_hat:mrec} reduces to 
\begin{equation}\label{E_hat:sigrec}
\hat \sigma_{r}(k,i)
\ge\sum_{j=1}^{k-r-i} \hat A_r(k,i,j) \hat \sigma_{r}(k-i,j)
\end{equation}
where 
\begin{equation}\label{E_hat:A}
\hat A_r(k,i,j) 
=
\frac{j^i}{i!}\left(\frac{k-i}{k}\right)^{(r-1)k}
\left(\frac{(r-1)!}{(k-i)^{r-1}}\frac{\hat a_r(k-i,j)}{j}\right)^i.
\end{equation}

The following observation indicates that restricting to susceptible graphs
which are triangle-free does not have a significant effect on the
asymptotics.  

\begin{lem}\label{L_hat:A}
  Let $\hat A_r(k,i,j)$ be as in \eqref{E_hat:A} and let $A_r(i,j)$ be as
  defined in Lemma~\ref{L_A}.
  For any fixed $i,j\ge1$, we have that $\hat A_r(k,i,j)\to A_r(i,j)$,
  as $k\to\infty$.
\end{lem}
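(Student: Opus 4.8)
The plan is to compare $\hat A_r(k,i,j)$ term-by-term with $A_r(k,i,j)$ from \eqref{E_A}, and then invoke Lemma~\ref{L_A}. Since $i$ and $j$ are now fixed, the only difference between the defining formulas \eqref{E_A} and \eqref{E_hat:A} lies in the replacement of $a_r(k-i,j)$ by $\hat a_r(k-i,j)$. So the heart of the matter is to show that for fixed $i,j$,
\[
  \frac{\hat a_r(k-i,j)}{a_r(k-i,j)} \to 1 \qquad \text{as } k\to\infty,
\]
and that the error does not blow up when raised to the $i$th power. First I would record, using the defining relation \eqref{E_hat:a}, that
\[
  \frac{\hat a_r(x,y)}{a_r(x,y)} \ge 1 - \frac{2ryx^{r-2}}{a_r(x,y)}
\]
whenever the right side is nonnegative (and that for $x$ large enough it is, since the correction is of smaller order). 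Next I would estimate $a_r(x,y)$ from below: from \eqref{E_a}, $a_r(x,y) = \binom{x}{r} - \binom{x-y}{r}$, which by Claim~\ref{C_a} equals $\frac{x^{r-1}}{(r-1)!}\cdot\frac1y\sum_{\ell=1}^y\left(\frac{x-\ell}{x}\right)^{r-1}\cdot y$; since each summand tends to $1$, for fixed $y$ we have $a_r(x,y) = (1+o(1))\,y\,x^{r-1}/(r-1)!$ as $x\to\infty$. Consequently
\[
  \frac{2ryx^{r-2}}{a_r(x,y)} = (1+o(1))\,\frac{2r(r-1)!}{x} \to 0,
\]
so indeed $\hat a_r(x,y)/a_r(x,y)\to1$, and in particular $\hat a_r(x,y)$ is eventually positive so the $\max$ in \eqref{E_hat:a} is inactive for large $x$.

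With $x=k-i$ (which tends to infinity with $k$ since $i$ is fixed), this gives $\hat a_r(k-i,j)/a_r(k-i,j) = 1 + O(1/k)$. Raising to the fixed power $i$ preserves the limit: $\left(\hat a_r(k-i,j)/a_r(k-i,j)\right)^i = 1 + O(1/k) \to 1$. Therefore, comparing \eqref{E_hat:A} with \eqref{E_A},
\[
  \hat A_r(k,i,j) = \left(\frac{\hat a_r(k-i,j)}{a_r(k-i,j)}\right)^i A_r(k,i,j) = (1+o(1))\,A_r(k,i,j),
\]
and by Lemma~\ref{L_A} the factor $A_r(k,i,j)\to A_r(i,j)$. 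Combining the two limits yields $\hat A_r(k,i,j)\to A_r(i,j)$, which is the claim.

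I do not anticipate a genuine obstacle here — this is a limit computation with both $i,j$ held fixed, so all the powers involved ($i$th powers, the $(r-1)k$ exponent already handled in Lemma~\ref{L_A}) are controlled. The only mild care needed is (a) confirming that the subtracted term $2rj(k-i)^{r-2}$ in \eqref{E_hat:a} is genuinely of lower order than $a_r(k-i,j)\asymp j(k-i)^{r-1}$, which is immediate since it saves a factor $k$, and (b) noting that, unlike in Lemma~\ref{L_A}, we no longer get monotonicity in $k$ (the triangle correction can behave non-monotonically), so the statement is only a convergence claim, not a monotone one — but convergence is all that is asserted and all that is needed. One should also remark that the bound displayed in the paragraph after \eqref{E_hat:a}, namely $jr\binom{k-i-2}{r-2} + jr(k-i-r)\binom{k-i-3}{r-3} < 2jr(k-i)^{r-2}$, is exactly the input used to control the correction, so no new estimate is required.
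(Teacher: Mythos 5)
Your proposal is correct and follows essentially the same route as the paper's proof: both reduce the claim to showing $\hat a_r(k-i,j)/a_r(k-i,j)\to 1$ (since the ratio $\hat A_r/A_r$ is exactly the $i$th power of this quantity, with $i$ fixed), and then invoke Lemma~\ref{L_A} for the convergence of $A_r(k,i,j)$. Your version merely spells out the asymptotics $a_r(x,y)\sim yx^{r-1}/(r-1)!$ via Claim~\ref{C_a} in more detail than the paper does.
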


\begin{proof}
  Fix $i,j\ge1$.  From their definitions we have that
  \[
  \frac{\hat A_r(k,i,j)}{A_r(k,i,j)} = \left( \frac{\hat
    a_r(k,i,j)}{a_r(k,i,j)} \right)^i.
  \]
  Since $a_r(k,i,j)$ is of order $k^i$ and $\hat a_r(k,i,j)-a(k,i,j) =
  O(k^{i-1})$, we have $\hat a_r(k,i,j) / a_r(k,i,j) \to 1$.
  Since $i$ is fixed, it follows by Lemma~\ref{L_A} that
  \[
  \lim_{k\to\infty} \hat A_r(k,i,j)
  = \lim_{k\to\infty} A_r(k,i,j)
  = A_r(i,j).  \qedhere
  \]  
\end{proof}

In order to get asymptotic lower bounds on $\hat m_r(k,i)$ it is useful
to further restrict to graphs with bounded level sizes.

\begin{defn}
  For $\ell\ge r$, let $\hat m_{r,\ell}(k) \le \hat m_r(k)$ be the number
  of graphs that contribute to $\hat m_r(k)$ which have level sizes
  bounded by $\ell$ (i.e., $|I_i|\le\ell$ for all $i$).  Let
  $\hat m_{r,\ell}(k,i)$ be the number of such graphs with exactly
  $i\le \ell$ vertices in the top level.
  Hence $\hat m_{r,\ell}(k)=\sum_{i=1}^\ell \hat m_{r,\ell}(k,i)$.  
\end{defn}

Observe that for fixed $k$, $\hat m_{r,\ell}(k)$ is increasing in
$\ell$, and equals $m_r(k)$ for $\ell\geq k-r$.

Lemma~\ref{L_hat:A} will be used to prove asymptotic lower bounds for
$\hat m$.  When $i$ is small, the resulting bounds are not sufficiently
strong.  Thus we also make use of the following lower bound on
$\hat m_{r,\ell}(k,i)$ for values of $i$ which are small compared with
$k$.  This is also used as a base case for an inductive proof of lower
bounds using Lemma~\ref{L_hat:A}.

\begin{lem}\label{L_hat:b}
For all relevant $i,k$ and $\ell\ge r$ such that $k>r(r^2+1)+i+2$,
we have that 
\[
\hat m_{r,\ell}(k,i)
\ge{k-r\choose i}\hat b_r(k,i)^i \hat m_{r,\ell}(k-i)
\]
where
\[
\hat b_r(k,i)
=
{k-i-r-1\choose r-1}\left(1-\frac{r^3}{k-i-r-2}\right).
\]
In particular $\hat m_{r,\ell}(k,i)>0$ for such $k$.
\end{lem}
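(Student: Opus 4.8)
The plan is to construct triangle-free, level-bounded susceptible graphs directly, by attaching $i$ new vertices as a fresh top level onto an existing graph counted by $\hat m_{r,\ell}(k-i)$. Given a triangle-free minimally susceptible graph $G'$ on $k-i$ vertices with all level sizes at most $\ell$, I want to add a set $T$ of $i$ new vertices, each joined to exactly $r$ vertices of $G'$, in such a way that (a) the resulting graph is still triangle-free, (b) $T$ becomes the top level (so each vertex of $T$ is joined to at least one vertex of the old top level of $G'$), and (c) no two vertices of $T$ are adjacent (automatic, since we only add edges from $T$ down into $G'$) — this last point is what keeps $T$ a single level of size $i\le\ell$ and preserves the level-size bound. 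The count is then ${k-r\choose i}$ for the labels of $T$, times $\hat b_r(k,i)^i$ for the independent choices of the $r$-neighbourhoods of the $i$ vertices of $T$, times $\hat m_{r,\ell}(k-i)$ for $G'$ — but we must sum only over those $G'$ whose top level is nonempty and whose choices of neighbourhoods genuinely land in the right place, which is why $\hat b_r(k,i)$ is a \emph{lower} bound on the number of safe neighbourhoods rather than an exact count.

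The heart of the matter is to justify the explicit form of $\hat b_r(k,i)$, i.e.\ to show that for a single new vertex $v\in T$ there are at least ${k-i-r-1\choose r-1}(1-r^3/(k-i-r-2))$ ways to pick an $r$-subset $S$ of $V(G')$ to be $N(v)$ such that (i) $S$ meets the old top level of $G'$, (ii) $S$ is an independent set in $G'$ (else $v$ together with an edge inside $S$ forms a triangle), and (iii) adding $v$ with neighbourhood $S$ creates no triangle with a previously added vertex of $T$. For (iii), since all vertices of $T$ have their neighbourhoods inside $G'$ and $T$ is independent, the only way two vertices $u,v\in T$ and a vertex of $G'$ could form a triangle is if $u,v$ shared a common neighbour and $u\sim v$ — impossible — so actually (iii) is vacuous once $T$ is independent; the real constraints are (i) and (ii). To count: fix one neighbour in the old top level — there is at least one such vertex, call it $w$ — and then choose the remaining $r-1$ neighbours from $V(G')\setminus(\{w\}\cup N_{G'}(w))$ to avoid an edge to $w$; since $G'$ has bounded degree (each vertex has at most $r+\ell$ or so neighbours — more carefully, in a minimally susceptible graph the down-degrees are exactly $r$ and up-degrees are bounded in terms of $\ell$), this removes only $O(1)$ vertices, giving roughly ${k-i-O(1)\choose r-1}$ choices. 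Then subtract the choices that create an edge \emph{among} the other $r-1$ chosen vertices: there are at most $|E(G')|\cdot{k-i\choose r-3}$ such bad $(r-1)$-sets, and $|E(G')|=r(k-i-r)$, so this is an $O(k^{r-3})$ correction against an $O(k^{r-1})$ main term, which is absorbed into the $1-r^3/(k-i-r-2)$ factor after bounding degrees by $r$ (valid since we may restrict attention to $G'$ in which we only use the minimal edges, so down-degree is $r$; up-degrees can be larger, but the triangle count only needs the $r$-down-degree for counting edges incident to a fixed vertex from below).

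I expect the main obstacle to be the bookkeeping in (ii)–(iii): carefully identifying which $r$-subsets $S$ must be excluded, and showing the total exclusion is at most $r^3/(k-i-r-2)$ times the main binomial, uniformly over the (unknown) structure of $G'$. The subtlety is that $G'$ is an arbitrary triangle-free minimally susceptible graph with level sizes $\le\ell$, so I cannot assume more than: every vertex has exactly $r$ neighbours in lower levels (down-degree $r$), hence the number of edges is $r(k-i-r)$, and the old top level is nonempty. The condition $k>r(r^2+1)+i+2$ is exactly what is needed to make $k-i-r-2>0$ and to make the error factor strictly positive, so that in particular $\hat m_{r,\ell}(k,i)>0$ follows by induction on $k$ (base case $k-i=r$, where $\hat m_{r,\ell}(r)=1$). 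So the final step is: the displayed inequality, combined with $\hat m_{r,\ell}(k)\ge\hat m_{r,\ell}(k,i)$ for the relevant $i$ and the positivity of $\hat b_r(k,i)^i$ for $k$ in the stated range, gives $\hat m_{r,\ell}(k,i)>0$ by induction.
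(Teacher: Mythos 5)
Your construction is the same as the paper's: attach $i$ new vertices as a fresh top level to a graph $G'$ counted by $\hat m_{r,\ell}(k-i)$, anchor each new vertex's $r$-neighbourhood at one canonically chosen vertex $w$ of the old top level (the paper takes the one of largest index) while avoiding $N_{G'}(w)$, which gives the $\binom{k-i-r-1}{r-1}$ main term since $w$ has exactly $r$ neighbours, and then discard the neighbourhoods containing an edge of $G'$, bounded via $|E(G')|=r(k-i-r)$; this matches the paper's bound $\binom{k-i-r-1}{r-1}-r(k-i-r-1)\binom{k-i-r-3}{r-3}\ge\hat b_r(k,i)$. Two small corrections. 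First, the discarded term is $O(k^{r-2})$, not $O(k^{r-3})$ (one edge plus $r-3$ free vertices), though your conclusion that it is absorbed into the factor $1-r^3/(k-i-r-2)$ is still right. Second, and this is the only genuine gap: your positivity argument "by induction on $k$ with base case $k-i=r$" does not close, because the displayed inequality is only available under $k>r(r^2+1)+i+2$, so the recursion cannot be iterated down past $k'\approx r(r^2+1)+2$; you need a separate direct verification that $\hat m_{r,\ell}(k')>0$ for all $r<k'\le r(r^2+1)+2$ (and indeed for all $k'$). The paper supplies this by exhibiting an explicit triangle-free minimally susceptible graph of every size (levels of size $r$ stacked on the seed, each level completely joined to the one below, with a final level of size $m\le r$), which is the missing base case your induction requires.
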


\begin{proof}
Let $i,k,\ell$ as in the lemma be given. 
We obtain the lemma by considering 
the subset ${\cal H}$ of 
graphs
contributing to $\hat m_{r,\ell}(k,i)$, constructed as follows. 
To obtain a graph $H\in {\cal H}$, 
select a subset $U\subset [k]-[r]$ of size $i$  
for the vertices in the top level of $H$, and a
minimally susceptible, triangle-free graph $H'$ on $[k]-U$ 
so that $[r]$ is a contagious set for $H'$ 
with all level sizes bounded by $\ell$
and 
$j$
vertices in the top level, for some $1\le j\le \min\{k-r-i,\ell\}$. 
Let $v$ denote the vertex in the top level of $H'$
of largest index. For each $u\in U$,
select a subset $V_u\subset [k]-U$ of size $r$ 
which contains $v$ and none of the neighbours of $v$ in 
$H'$ and so that no $v',v''\in V_u$ are connected
in $H'$. 
Finally, let $H$ be the minimally susceptible  graph 
on $[k]$ with
subgraph $H'$ such that each vertex $u\in U$
is connected to all vertices in $V_u$. 
By the choice of $H'$ and $V_u$, $H$ 
contributes to $\hat m_{r,\ell}(k,i)$. 
By the choice of $v$, 
for any choice of $U$, $H'$ and $V_u$, a unique graph $H$ 
is obtained. 
Hence $|{\cal H}|\le  \hat m_{r,\ell}(k,i)$. 

To conclude, we claim that,  
for each $u\in U$, the number of 
possibilities  for $V_u$ is bounded from below by
\[
{k-i-r-1\choose r-1}-r(k-i-r-1){k-i-r-3\choose r-3}
\ge
\hat b_r(k,i).
\]
To see this, note that of the $r(k-i)$ edges in $H'$, 
there are $r(r+1)$ that are either incident to $v$ or else connect a neighbour of
$v$ in $H'$ to another vertex below the top level of $H'$. 
Therefore 
\[
\hat m_{r,\ell}(k,i)
\ge
{k-r\choose i}\hat b_r(k,i)
\sum_{j}\hat m_{r,\ell}(k-i,j)
=
{k-r\choose i}\hat b_r(k,i)\hat m_{r,\ell}(k-i)
\]
(where the sum is over $1\le j\le \min\{k-r-i,\ell\}$)
as claimed. 

By the choice of $i,k$, $\hat b_r(k,i)>0$.  
Hence $\hat m_{r,\ell}(k,i)>0$,
since
 $\hat m_{r,\ell}(k)>0$ for all relevant $k,\ell$,
 as is easily seen (e.g., by considering minimally susceptible, 
 triangle-free
 graphs of size $k=nr+m$, for some $n\ge1$ and $m\le r$, 
 which have $m$ vertices in the top level and $r$ vertices in 
 all levels below, and 
 all vertices in level $i\ge1$ are connected to all $r$ 
 vertices in level $i-1$).
\end{proof}

\begin{lem}\label{L_hat:mLB}
  As $k\to\infty$, we have that 
  \[
    m_r(k) \geq \hat m_r(k) \ge 
    e^{-o(k)} e^{-(r-2)k}(k-r)!\left(\frac{k^{r-1}}{(r-1)!}\right)^k.
  \]
\end{lem}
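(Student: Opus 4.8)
I want a lower bound on $\hat m_r(k)$ matching (up to $e^{o(k)}$) the upper bound on $m_r(k)$ implicit in Lemma~\ref{L_mUB}. Equivalently, in terms of $\hat\sigma_r(k) = \sum_i \hat\sigma_r(k,i)$, I need $\hat\sigma_r(k) \ge e^{-o(k)} e^{-(r-2)k}$. The strategy is an induction on $k$ using the recursive lower bound \eqref{E_hat:sigrec}, seeded by the explicit lower bound of Lemma~\ref{L_hat:b}, and controlled by the fact (Lemma~\ref{L_hat:A}) that $\hat A_r(k,i,j) \to A_r(i,j) = j^i e^{-(r-1)i}/i!$ for fixed $i,j$.

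First I would set up the target more precisely. Choose a slowly growing function, say $i,j$ ranging up to some $\ell = \ell(k) \to\infty$ with $\ell = o(k)$ (e.g. $\ell = \lfloor\log k\rfloor$), and work with $\hat m_{r,\ell}$, which is $\le \hat m_r$, so a lower bound on it suffices. The heart of the matter is to show that the operator $A_r = (A_r(i,j))_{i,j\ge1}$ on sequences has a positive eigenvector $\phi = (\phi_i)$ with eigenvalue $\lambda = 1$: indeed the companion Claim~\ref{C:induct} in Lemma~\ref{L_mUB} shows $\sum_j A_r(i,j) (j^{-1/2} e^{-j}) \le i^{-1/2} e^{-i}$, i.e. $\psi_i = i^{-1/2} e^{-i}$ is a super-solution, and one expects the true Perron eigenvector to behave like $e^{-i}$ times a slowly-varying correction, with eigenvalue exactly $1$ (this is forced by $\alpha_r$ being critical). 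So I would first identify (or construct, by a truncation/Perron–Frobenius argument on the $\ell\times\ell$ corner of $A_r$) a positive vector $\phi^{(\ell)}$ with $\sum_{j\le\ell} A_r(i,j)\phi^{(\ell)}_j \ge (1-o(1))\phi^{(\ell)}_i$ and $\phi^{(\ell)}_i \ge e^{-i - o(\ell)}$ for $i\le\ell$.

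The induction then runs as follows. Write $k$ in blocks: peel off $i_1$ vertices from the top level, reducing to size $k - i_1$, then $i_2$, etc. Using \eqref{E_hat:sigrec} and replacing $\hat A_r(k-\cdots,i,j)$ by $(1-o(1))A_r(i,j)$ — legitimate by Lemma~\ref{L_hat:A} once $k$ is large relative to the $i,j$ involved, and the error is $e^{o(k)}$ over the $O(k/\ell)$ blocks — I get $\hat\sigma_r(k) \gtrsim \langle\phi, \text{(vector of }\hat\sigma_r(k',\cdot)\text{)}\rangle$ roughly reproducing itself block by block, picking up a factor $(1-o(1))$ per block and a factor $e^{-(r-2)i_m}$ absorbed into the $e^{-(r-2)k}$ we are aiming for (this is exactly the role of the $\left((k-i)/k\right)^{(r-1)k}\sim e^{-(r-1)i}$ factor in $\hat A_r$, combined with the $(r-1)!/(k-i)^{r-1}$ powers). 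The base case, when $k$ drops below some fixed multiple of $\ell$, is handled by Lemma~\ref{L_hat:b}: iterating it $O(\ell)$ times gives $\hat m_{r,\ell}(k_0) \ge \prod (\cdots) > 0$ with the product bounded below by $e^{-o(k)}$ times the right normalization, since each $\hat b_r(k,i) = \binom{k-i-r-1}{r-1}(1-O(r^3/k))$ is $\Theta(k^{r-1})$ and the $\binom{k-r}{i}$ factors assemble into the $(k-r)!$ and $((r-1)!/k^{r-1})^{-k}$ normalizers.

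**The main obstacle.** The delicate point is controlling the eigenvalue: I need $\lambda_1(A_r) = 1$ exactly, not just $\le 1$. The upper bound $\lambda_1 \le 1$ is Claim~\ref{C:induct}; the matching lower bound $\lambda_1 \ge 1$ (so that iterating the recursion $\Theta(k/\ell)$ times does not lose an exponential factor $\lambda_1^{k/\ell}$) must come from a matching pointwise \emph{lower} bound on a sub-solution of $A_r$ — some explicit $\phi_i \ge e^{-i-o(i)}$ with $\sum_j A_r(i,j)\phi_j \ge (1-o(1))\phi_i$. Establishing this, and checking that the $o(1)$ errors (from $\hat A_r \to A_r$, from truncating at $\ell$, from the block decomposition, and from $\hat b_r$) all aggregate to merely $e^{o(k)}$ rather than $e^{\Omega(k)}$, is the real content; everything else is bookkeeping with the normalization $\hat\sigma$. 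I would expect to defer the sharp two-sided eigenvector estimate to the same appendix that proves Claim~\ref{C:induct}, and cite it here.
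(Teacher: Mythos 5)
Your overall strategy is the paper's: normalize to $\hat\sigma$, truncate to bounded level sizes, feed the recursion \eqref{E_hat:sigrec} with the base case from Lemma~\ref{L_hat:b}, replace $\hat A_r(k,i,j)$ by its limit $A_r(i,j)$ via Lemma~\ref{L_hat:A}, and read off the growth rate from the Perron eigenvalue of the resulting operator. But you have correctly isolated the decisive step --- showing that the relevant eigenvalue is at least $e^{-(r-2)}$ (equivalently, after the diagonal conjugation, that the Perron root of $A_2(i,j)=j^ie^{-i}/i!$ is at least $1$) --- and then you do not prove it. You propose to ``defer the sharp two-sided eigenvector estimate to the same appendix that proves Claim~\ref{C:induct} and cite it here,'' but no such two-sided estimate exists: Claim~\ref{C:induct} only exhibits a \emph{super}solution $\psi_i=i^{-1/2}e^{-i}$, which gives the upper bound on the eigenvalue (and is only ever used for Lemma~\ref{L_mUB}). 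The heuristic that the eigenvalue ``must'' equal $1$ because $\alpha_r$ is critical is circular, since the criticality of $\alpha_r$ is itself derived from these counting estimates. As written, the proposal therefore has a genuine gap exactly at what you yourself call ``the real content.''

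For comparison, the paper closes this gap without constructing any subsolution. It keeps $\ell$ \emph{fixed} (rather than growing with $k$), encodes the recursion as $\hat\Sigma_k\ge\psi(\hat A_k)\hat\Sigma_{k-1}$ with a block companion matrix $\psi(\cdot)$ that tracks the last $\ell$ sizes (this also handles cleanly the fact that the step $k\mapsto k-i$ depends on $i$, which your ``peel off blocks'' description glosses over), and characterizes the Perron eigenvalue $\lambda$ of $\psi(A)$ by the condition that $D_\lambda A$ has Perron eigenvalue $1$, where $D_\lambda=\mathrm{diag}(\lambda^{-i})$. The lower bound then comes from an elementary fact about finite positive matrices: the spectral radius is at least the minimum row sum, and for $\lambda<e^{-(r-2)}(1-\delta/e)$ every row sum of the $\ell\times\ell$ truncation of $D_\lambda A$ exceeds $1$ once $\ell$ is large, because row $i$ sums to at least $(e-\delta)^{-i}\ell^i/i!$. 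Letting $\ell\to\infty$ gives $\liminf_\ell\lambda(r,\ell)\ge e^{-(r-2)}$, which is all that is needed. If you replace your appeal to an unavailable eigenvector estimate by this row-sum argument (and adopt the fixed-$\ell$, companion-matrix bookkeeping), your outline becomes the paper's proof.
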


Comparing this with Lemma~\ref{L_mUB}, we see that the number of
triangle-free susceptible graphs of size $k$ is not much smaller than the
number of susceptible graphs (up to an error of $e^{o(k)}$).

\begin{proof}
  The idea is to use spectral analysis of the linear recursion
  \eqref{E_hat:sigrec-ell}.  However, some work is needed to write the
  recursion in a usable form.
  
  Put
  \[
    \hat \sigma_{r,\ell}(k,i) 
    = 
    \frac{\hat m_{r,\ell}(k,i)}{(k-r)!}\left(\frac{(r-1)!}{k^{r-1}}\right)^k
  \]
  Restricting \eqref{E_hat:sigrec} to $j\le \ell$, it follows that  
  \begin{equation}\label{E_hat:sigrec-ell}
    \hat \sigma_{r,\ell}(k,i)
    \ge\sum_{j=1}^{\ell} \hat A_r(k,i,j) \hat \sigma_{r,\ell}(k-i,j)
    \quad \text{for $i\le \ell$}.
  \end{equation}

  In order to express \eqref{E_hat:sigrec-ell} in matrix form, 
  we introduce the following notations. 
  For an $\ell\times \ell$ matrix $M$, let 
  $M_j$, be the $\ell\times\ell$ matrix whose $j$th row
  is that of $M$ and all other entries are 0.  Let 
  \[
    \psi(M)=  
    \begin{bmatrix}
      M_1 	& M_2 & \cdots & M_{\ell-1} & M_\ell \\
      I_\ell 	\\
      & I_\ell \\
      & & \ddots\\
      & & & I_\ell
    \end{bmatrix}
\]
where $I_\ell$ is the $\ell\times\ell$ identity matrix
and all empty blocks are filled with 0's. 
For all relevant $k$, put  
\[
\hat \Sigma_k=
\hat \Sigma_k(r,\ell)=
\begin{bmatrix}
\hat \sigma_k\\
\hat \sigma_{k-1}\\
\vdots\\
\hat \sigma_{k-\ell+1}
\end{bmatrix}
\]
where $\hat \sigma_k=\hat \sigma_k(r,\ell)$ is the $1\times \ell$ vector 
with entries $(\hat \sigma_k)_j = \hat \sigma_{r,\ell}(k,j)$. 

Using this notation, \eqref{E_hat:sigrec-ell} can be written as
\begin{equation}\label{E_hat:Sig}
\hat \Sigma_k\ge\psi(\hat A_{k}) \hat \Sigma_{k-1},
\end{equation}
where $\hat A_k=\hat A_k(r,\ell)$ 
is the $\ell\times\ell$ matrix 
with entries $(\hat A_k)_{i,j} = \hat A_r(k,i,j)$. 

By Lemma~\ref{L_hat:b}, we have that all coordinates of 
$\hat\Sigma_k$ are positive for all $k$ large enough.
Let $A=A(r,\ell)$ denote the $\ell\times\ell$ matrix 
with entries $A_{i,j}= A_r(i,j)$. 
For $\eps>0$, let $A_\eps=A_\eps(r,\ell)$, 
be the $\ell\times\ell$ matrix 
with entries $(A_\eps)_{i,j}= A_{i,j}-\eps$. By 
Lemma~\ref{L_hat:A}, for $k$ large enough each entry of $\hat A_k$ is
greater than the same entry of $A_\eps$.
Since $A>0$, for some $\eps_{r,\ell}>0$, we have that 
$A_\eps>0$ for all $\eps\in(0,\eps_{r,\ell})$. 
Hence, by Lemma~\ref{L_hat:A} and \eqref{E_hat:Sig}, 
for any such $\eps>0$, there is a $k_\eps$ so that 
\[
\hat\Sigma_{k_\eps+k}\ge\psi(A_\eps)^k \hat\Sigma_{k_\eps}>0
\quad \text{for $k\ge0$},
\]
with entries of $\Sigma_{k_\eps}$ positive.
Therefore, up to a factor of $e^{-o(k)}$, 
the growth rate of 
$\hat \sigma_{r,\ell}(k)
=\sum_{i} \hat \sigma_{r,\ell}(k,i)$
is given by the Perron 
eigenvalue $\lambda=\lambda(r,\ell)$ of $\psi(A)$.

Let $D_\lambda = {\rm diag}(\lambda^{-i}:1\le i\le \ell)$.  We claim
that the Perron eigenvalue of $\psi(A)$ is characterized by the property
that the Perron eigenvalue of $D_\lambda A$ is $1$.  To see this, one
simply verifies that if $D_\lambda A v = v$, then 
\[
v_\lambda
=
\begin{bmatrix}
\lambda^{\ell-1}v\\
\lambda^{\ell-2}v\\
\vdots\\
v
\end{bmatrix}
\]
satisfies $\psi(A) v_\lambda = \lambda v_\lambda$.  If $v$ has
non-negative entries, then $1$ is the Perron eigenvalue of $D_\lambda A$
and $\lambda$ the Perron eigenvalue of $\psi(A)$.

Fix $\delta>0$. If $\lambda < e^{-(r-2)}(1-\delta/e)$, we claim that
for $\ell$ large enough, every row sum of $D_\lambda A$ is
greater than $1$. Indeed, the sum of row $i\le \ell$ is
(using the bound $i!<(i/e)^i$)
\[
(e^{r-1} \lambda )^{-i} \sum_{j=1}^\ell \frac{j^i}{i!} 
\geq (e^{r-1} \lambda )^{-i} \frac{\ell^i}{i!}
\geq (e-\delta)^{-i} \frac{\ell^i}{i!}
\geq \left(\frac{e}{e-\delta}\frac{\ell}{i}\right)^i
\geq \left(\frac{e}{e-\delta}\right)^i>1.
\]
Since the spectral radius of a matrix is bounded below by its minimum
row sum, it follows that for such $\lambda$, the spectral radius of
$D_\lambda A$ is greater than $1$.  Since the spectral radius of
$D_\lambda A$ is decreasing in $\lambda$, the Perron eigenvalue of
$\psi(A)$ is at least $e^{-(r-2)}(1- \delta/e)$ for $\ell$ large enough, and
hence $\liminf_{\ell\to\infty} \lambda(r,\ell) \geq e^{-(r-2)}$.
Taking $\ell\to\infty$, we find that    
\[
\hat m_r(k) \ge 
e^{-o(k)} e^{-(r-2)k}(k-r)!\left(\frac{k^{r-1}}{(r-1)!}\right)^k
\]
as claimed. 
\end{proof}

We require a lower bound for the number of 
minimally susceptible  graphs of size $k$ 
with $i=\Omega(k)$ vertices in the top level
in order to estimate the growth of super-critical 
$r$-percolations on $\Gnp$. 

\begin{lem}\label{L_hat:mkiLB}
Let $\eps\in(0,1/(r+1))$. 
For all sufficiently large $k$ and $i\le (\eps/r)^2k$,
we have that  
\[
\hat m_r(k,i)\ge 
e^{-i\eps-(r-2)k-o(k)}(k-r)!\left(\frac{(k-i)k^{r-2}}{(r-1)!}\right)^k
\] 
where $o(k)$ depends on $k,\eps$, 
but not on $i$. 
\end{lem}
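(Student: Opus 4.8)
The plan is to peel off the top level and combine the lower bound of Lemma~\ref{L_hat:b} with the overall lower bound of Lemma~\ref{L_hat:mLB}. Precisely, by Lemma~\ref{L_hat:b}, for $k$ large enough and $i\le(\eps/r)^2k$ we have
\[
\hat m_r(k,i)\ge\binom{k-r}{i}\hat b_r(k,i)^i\,\hat m_r(k-i),
\]
since $\hat m_{r,\ell}\le\hat m_r$ and we may take $\ell=k-r$ (so no level-size restriction). The strategy is then to estimate each of the three factors separately: the binomial coefficient contributes (via $\binom{k-r}{i}\ge((k-r)/i)^i$ or a Stirling estimate) a factor that, together with the $(k-r)!$ we want to extract, rearranges appropriately; the factor $\hat b_r(k,i)^i$ is $\binom{k-i-r-1}{r-1}^i(1-O(r^3/(k-i)))^i$, and since $i=O(k)$ the correction term is $e^{-o(k)}$, while $\binom{k-i-r-1}{r-1}\sim(k-i)^{r-1}/(r-1)!$; and the factor $\hat m_r(k-i)$ is handled by Lemma~\ref{L_hat:mLB} applied at $k-i$, giving $e^{-o(k)}e^{-(r-2)(k-i)}(k-i-r)!((k-i)^{r-1}/(r-1)!)^{k-i}$.

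The bookkeeping step is to multiply these together and show the product is at least
\[
e^{-i\eps-(r-2)k-o(k)}(k-r)!\left(\frac{(k-i)k^{r-2}}{(r-1)!}\right)^k.
\]
Collecting the powers of $(r-1)!$: we get $(r-1)!^{-i}$ from $\hat b_r^i$ and $(r-1)!^{-(k-i)}$ from $\hat m_r(k-i)$, for a total of $(r-1)!^{-k}$, matching the target. Collecting the factorials: $\binom{k-r}{i}(k-i-r)!=\frac{(k-r)!}{i!\,(k-i-r)!}(k-i-r)!=\frac{(k-r)!}{i!}$, and $1/i!\ge e^{-o(k)}$ is far too lossy — instead one keeps $\binom{k-r}{i}\ge(e(k-r)/i)^i\cdot e^{-o(k)}$ or simply notes $i!\le k^i$ so the $i!^{-1}$ costs at most $e^{-i\log k}$, which must be absorbed. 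This is the delicate point: the $\binom{k-r}{i}$ and the leftover powers of $k$ versus $(k-i)$ have to be balanced so that the combined powers of $k$ and $(k-i)$ come out to $(k-i)^? k^?$ summing to the stated $((k-i)k^{r-2})^k = (k-i)^k k^{(r-2)k}$. From $\hat b_r^i$ we get $(k-i)^{(r-1)i}$; from $\hat m_r(k-i)$ we get $(k-i)^{(r-1)(k-i)}$; together $(k-i)^{(r-1)k}$. We need $(k-i)^k k^{(r-2)k}$, i.e. $(k-i)^k$ and $k^{(r-2)k}$; the discrepancy $(k-i)^{(r-1)k}/[(k-i)^k k^{(r-2)k}] = ((k-i)/k)^{(r-2)k}=(1-i/k)^{(r-2)k}$, which is $e^{-(r-2)i+o(k)}$ since $i=O(\sqrt{k}\cdot\sqrt{k})=o(k^{?})$... actually $i/k\to0$ only if $\eps$ small, and $(1-i/k)^{(r-2)k}\ge e^{-(r-2)i - O(i^2/k)}$, and $i^2/k\le(\eps/r)^4 k\cdot$—no, $i^2/k=o(k)$ only fails; here $i\le(\eps/r)^2 k$ so $i^2/k\le(\eps/r)^4 k$ is $\Theta(k)$. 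This forces one to be careful: the bound must be $e^{-i\eps}$, not $e^{-i\cdot O(1)}$, so $\eps$ being small is exactly what saves it — the $(r-2)i$ from this rearrangement must be cancelled by the $e^{+(r-2)i}$ coming from comparing $e^{-(r-2)(k-i)}$ with the target $e^{-(r-2)k}$, which indeed supplies $e^{+(r-2)i}$. Everything of size $i\cdot O(1)$ then cancels, leaving only genuinely $o(k)$ errors plus the controlled $e^{-i\eps}$ (whose origin is the requirement $i\le(\eps/r)^2k$ so that $i^2/k\le(\eps/r)^2\cdot\eps\cdot k/r^2\cdot$—to be pinned down, but morally $i^2/k\lesssim\eps i$).

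The main obstacle, then, is the precise accounting of the $e^{-i\eps}$ term: one must verify that all error contributions of order $i$ (from $(1-i/k)^{\Theta(k)}$ expansions and from $\binom{k-r}{i}$ versus $(k-r)!/i!$) either cancel exactly against the $e^{(r-2)i}$ gained in the exponential prefactor, or are bounded by $i\eps$ using the hypothesis $i\le(\eps/r)^2 k$. Once the leading powers of $k$, $(k-i)$, $(r-1)!$ and the factorials are seen to match the target identically, and all $O(i)$ and $o(k)$ errors are collected with the right sign, the lemma follows. A secondary point is checking the hypothesis of Lemma~\ref{L_hat:b} ($k>r(r^2+1)+i+2$) holds, which is immediate for large $k$ since $i=O(k)$ with small constant and $k-i\to\infty$; and that Lemma~\ref{L_hat:mLB} may legitimately be invoked at $k-i$ rather than $k$, which is fine as $k-i\to\infty$.
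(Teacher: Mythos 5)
There is a genuine gap, and it sits exactly at the point you flag as ``the delicate point'' but do not resolve: the factor $1/i!$. Your decomposition gives
\[
\hat m_r(k,i)\ \ge\ \binom{k-r}{i}\,\hat b_r(k,i)^i\,\hat m_r(k-i)
\ \approx\ \frac{(k-r)!}{i!}\left(\frac{(k-i)^{r-1}}{(r-1)!}\right)^{k}e^{-(r-2)(k-i)-o(k)},
\]
since the $(k-i-r)!$ from Lemma~\ref{L_hat:mLB} cancels against the $(k-r-i)!$ in the binomial coefficient. Comparing with the target, all the powers of $(k-i)$, $k$ and $(r-1)!$ and the $e^{(r-2)i}$ do balance as you describe, but you are left short by exactly a factor of $i!$. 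For $i=\Theta(k)$ (which is the entire point of this lemma --- it is needed precisely for $i=\Omega(k)$) this is a loss of $e^{i\log i-i}=e^{\Theta(k\log k)}$, which cannot be absorbed into $e^{-i\eps-o(k)}$. None of your proposed remedies ($\binom{k-r}{i}\ge((k-r)/i)^i$, or $i!\le k^i$) changes this: they merely relocate the $i^{-i}$ or $e^{-i\log k}$ loss, which remains of order $k\log k$ in the exponent.

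The loss is structural, not a bookkeeping artifact. In Lemma~\ref{L_hat:b} each top-level vertex is forced to attach to one \emph{specific} vertex $v$ of the previous top level, giving $\approx (k-i)^{r-1}/(r-1)!$ choices per vertex; in the true recursion \eqref{E_hat:mrec} each such vertex has $\hat a_r(k-i,j)\approx j(k-i)^{r-1}/(r-1)!$ choices, and the extra $j^i$ with $j\approx(1-\eps)i$ is what cancels the $1/i!$ (yielding $((1-\eps)e)^i$, whence the $e^{-i\eps}$ slack in the statement). Moreover $\hat m_r(k-i)=\sum_j\hat m_r(k-i,j)$ is dominated by graphs with top level $j=O(1)$ (cf.\ Lemma~\ref{L_mUB}), so one cannot harvest the $j^i$ factor from the aggregate count; one must track $j=\Theta(i)$ through the recursion. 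This is why the paper uses your argument only in the regime $i\le k/\log^2 k$ (where $\log i!=o(k)$ is harmless) and, for $k/\log^2 k<i\le(\eps/r)^2k$, runs an induction on $k$ through \eqref{E_hat:mrec}, restricting the sum to $j\in[(1-\eps)i,(1-\eps)i+(\eps/r)^2 i]$ and verifying via several monotonicity sub-claims that the accumulated per-step loss is at most $e^{-\eps i}$. Your proposal would need to be replaced by that (or an equivalent) inductive argument in the large-$i$ regime.
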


Although the proof is somewhat involved, 
the general scheme is straightforward. 
We use Lemmas~\ref{L_hat:b},\ref{L_hat:mLB} 
to obtain a sufficient bound for $i,k$ 
in a range for which $i/k\ll1$. 
Then, for all other
relevant $i,k$ we proceed by induction, using \eqref{E_hat:mrec}. 
The inductive step (Claim~\ref{Cl_hat:rhoind} below) of the proof 
appears in  Appendix~\ref{A_Cl_hat:rhoind}.

\begin{proof}
Fix some $k_r$ so that 
\[
k_r>
\max\left\{e^{r/\eps},\frac{r(r^2+1)+2}{1-(\eps/r)^{2}}\right\}.
\]
Note that, for all $k>k_r$ and $i\le(\eps/r)^2k$, 
we have that 
$k/\log^2k< (\eps/r)^2k$
and that Lemma~\ref{L_hat:b} 
applies to $\hat m_r(k,i)$ 
(setting $\ell=k-r$, so that 
$\hat m_{r,\ell}(k,i)=\hat m_r(k,i)$). 

For all relevant $i,k$, let  
\begin{equation}\label{E_hat:rho}
\hat \rho_r(k,i)=\frac{\hat m_r(k,i)}{(k-r)!}
\left(\frac{(r-1)!}{(k-i)k^{r-2}}\right)^k.
\end{equation}

By Lemma~\ref{L_hat:mLB} there is some $f_r(k)\ll k$ such that 
\[
\hat m_r(k)\ge e^{-(r-2)k-f_r(k)}(k-r)!\left(\frac{k^{r-1}}{(r-1)!}\right)^k.
\]  
Without loss of generality, we assume 
$f_r$ is non-decreasing. 

By Lemma~\ref{L_hat:b}, 
we find that for all $k>k_r$ and relevant $i$,  
$\hat \rho_r(k,i)$ is bounded from below by 
\[
\frac{e^{-(r-2)(k-i)-f_r(k-i)}}{i!}
\hat b_r(k,i)^i
\left(\frac{(k-i)^{r-1}}{(r-1)!}\right)^{k-i}
\left(\frac{(r-1)!}{(k-i)k^{r-2}}\right)^k.
\]
By the bound ${n\choose k}\ge (n-k)^k/k!$,
\[
\hat b_r(k,i)
\ge
\frac{(k-i-2r)^{r-1}}{(r-1)!}
\left(1-\frac{r^3}{k-i-r-2}\right).
\]
Therefore
the lower bound for $\hat \rho_r(k,i)$ above 
is bounded from below by (using the 
inequality $i!<i^i$)
\[
C_r(k,i)
g_r(k,i)e^{-(r-2)k-f_r(k-i)-i\log{i}}
\]
where 
\[
C_r(k,i)
=
\left(1-\frac{2r}{k-i}\right)^{(r-1)i}
\left(1-\frac{r^3}{k-i-r-2}\right)^i
\]
and
\[
g_r(k,i)
=
e^{(r-2)i}\left(\frac{k-i}{k}\right)^{(r-2)k}.
\]

If $r=2$, then $g_r\equiv1$. We note that, for $r>2$,  
\[
\frac{\partial}{\partial i}g_r(k,i)
=-\frac{(r-2)i}{k-i}g_r(k,i)<0
\]
and so, for any such $r$, for any relevant $k$, 
$g_r(k,i)$ is decreasing in $i$.
By the inequality $(1-x)^y>1-xy$, 
for any $k>k_r$ and $i\le (\eps/r)^2k$, 
\begin{align*}
C_r(k,i)
&>1-\frac{r^2i}{k-i}-\frac{r^3i}{k-i-r-2}\\
&>1
-\frac{2\eps^2}{1-(\eps/r)^2}
-\frac{r\eps^2}{1-(\eps/r)^2-(r+2)/k}\\
&>1
-\frac{2/(r+1)^2}{1-1/r^4}
-\frac{1/r}{1-1/r^4-(r+2)/k_r}\\
&>0
\end{align*}
since $k_r>e^{r/\eps}>e^{r(r+1)}$, $r\ge2$, and $\eps<1/(r+1)$
(and noting that the second last line is increasing in $r$). 
Altogether, for some $\xi'(r)>0$, 
we have that 
\begin{equation}\label{E_hat:rhoLB-smalli}
\hat \rho_r(k,i)\ge 
\xi'(r) e^{-(r-2)k-h_r(k)}\quad \text{for $k>k_r$ and $i\le k/\log^2{n}$}
\end{equation}
where 
\begin{equation}\label{E_hrk}
h_r(k)=f_r(k)
-\log g_r\left(k,\frac{k}{\log^2{k}}\right)
+\frac{k}{\log^2{k}}\log\left(\frac{k}{\log^2{k}}\right).
\end{equation}

We note that $h(k)\ll k$ as
$k\to\infty$.

\begin{claim}\label{Cl_hat:rhoind}
For some $\xi=\xi(r,\eps)>0$, for all $k>k_r$ 
and $i\le (\eps/r)^2k$, we have that 
$\hat \rho_r(k,i) \ge \xi e^{-i\eps-(r-2)k-h_r(k)}$. 
\end{claim}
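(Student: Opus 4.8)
The plan is to prove the bound $\hat\rho_r(k,i) \ge \xi e^{-i\eps - (r-2)k - h_r(k)}$ by induction on $k$, using the recursion \eqref{E_hat:mrec} and peeling off the top level. The base of the induction is the range $i \le k/\log^2 k$, which is already handled by \eqref{E_hat:rhoLB-smalli} (choosing $\xi \le \xi'(r)$); so it suffices to treat $k/\log^2 k < i \le (\eps/r)^2 k$ assuming the claim holds for all smaller values of $k$. Rewriting \eqref{E_hat:mrec} in terms of $\hat\rho_r$ and the normalized quantities, removing the top $i$ vertices leaves a triangle-free minimally susceptible graph of size $k-i$ with some number $j$ of top-level vertices. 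Since $i = \Omega(k/\log^2 k)$ is not too small, one expects that the dominant contribution to the sum comes from $j$ of order comparable to $i$ (or at least $j$ not too small), where the inductive hypothesis $\hat\rho_r(k-i,j) \ge \xi e^{-j\eps - (r-2)(k-i) - h_r(k-i)}$ applies. The key is to show that after substituting this bound, the resulting sum over $j$ — which contains a factor roughly $\binom{k-r}{i} \hat a_r(k-i,j)^i (j^{\,0}\text{-type terms}) / (\text{normalizations})$ — is at least $\xi e^{-i\eps - (r-2)k - h_r(k)}$.

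Concretely, I would first convert the recursion into an inequality of the form
\[
\hat\rho_r(k,i) \ge \frac{1}{i!}\sum_{j} \left(\frac{\hat a_r(k-i,j)}{\text{(normalization)}}\right)^{i}
\left(\frac{k-i}{k}\right)^{(r-2)k}(\cdots)\,\hat\rho_r(k-i,j),
\]
then insert the inductive bound for $\hat\rho_r(k-i,j)$ and factor out $\xi e^{-(r-2)k - h_r(k-i)}$. Using $h_r(k-i) \le h_r(k)$ (monotonicity, which one should check follows from the way $h_r$ is built from the non-decreasing $f_r$ and the explicit terms in \eqref{E_hrk}) absorbs the $h_r$ discrepancy. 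What remains is to show that
\[
\frac{1}{i!}\sum_j \Big(\tfrac{\hat a_r(k-i,j)}{(\text{norm})}\Big)^i e^{-j\eps}\,(\cdots) \ \ge\ e^{-i\eps}.
\]
Here one uses Claim~\ref{C_a} to write the normalized $\hat a_r(k-i,j)/j$ as an average of terms $((k-i-\ell)/(k-i))^{r-1}$, each close to $1$ since $\ell \le j$ and $j$ is small relative to $k-i$; the $2ryx^{r-2}$ correction in $\hat a_r$ only costs a $1 - O(1/k)$ factor, hence $e^{-o(k)}$ over the $i$-th power. The factor $\binom{k-r}{i}/[\text{stuff}]$ combined with $1/i!$ should produce, after Stirling, a term of the shape $(ej/i)^i$ or similar, and choosing the dominant $j \approx i$ (which is in range because $i \le (\eps/r)^2 k$ keeps $j$ well below $k-r-i$) makes the sum exceed a single term of size $e^{-i\eps}$ up to the constant $\xi$.

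The main obstacle is the careful bookkeeping of the three competing exponential-in-$k$ and exponential-in-$i$ factors: the geometric loss $e^{-j\eps}$ from the inductive hypothesis, the combinatorial gain from $\binom{k-r}{i}/i!$ and $\hat a_r(k-i,j)^i$, and the $g_r$-type factor $((k-i)/k)^{(r-2)k}$ for $r>2$, all of which must be shown to balance to leave exactly $e^{-i\eps}$ (and no worse) after optimizing over $j$. In particular one must verify that restricting the sum to a single well-chosen value of $j$ loses nothing at the exponential scale, that $j$-values near $i$ are admissible given $i \le (\eps/r)^2 k$ and the range of $j$ in \eqref{E_hat:mrec}, and that the cumulative $e^{-o(k)}$ errors (from the triangle correction in $\hat a_r$, from $\hat b_r$, and from Stirling) are genuinely $o(k)$ uniformly in $i$. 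This is exactly the content of the inductive step deferred to Appendix~\ref{A_Cl_hat:rhoind}, and completing it establishes Lemma~\ref{L_hat:mkiLB} upon taking $k \to \infty$ and absorbing $h_r(k) = o(k)$.
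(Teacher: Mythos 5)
Your outline matches the paper's strategy at the structural level: induct on $k$ with base case $i\le k/\log^2 k$ from \eqref{E_hat:rhoLB-smalli}, rewrite \eqref{E_hat:mrec} for $\hat\rho_r$, insert the inductive hypothesis, and use monotonicity of $h_r$ (this monotonicity does need a proof -- it is Sub-claim~\ref{SC_hrk} -- since $g_r(k,k/\log^2k)$ must be shown decreasing, but that is routine). The gap is in the quantitative core, where your plan as stated would fail. First, the dominant $j$ is \emph{not} $j\approx i$ but $j\approx(1-\eps)i$. Writing $j=\delta i$ and $\gamma=i/k$, the term-by-term balance is governed to leading order by $\bigl(\delta e^{1-\delta}\bigr)^i e^{-\delta\eps i}$ versus the target $e^{-\eps i}$, i.e.\ by the sign of $\log\delta+(1-\delta)(1+\eps)$. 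At $\delta=1$ this vanishes identically, so there is no margin at the exponential-in-$i$ scale, and the subleading corrections of order $\gamma i$ (from $((k-i-j)/(k-i))^{k}$, from the $(r-2)$-powers, and from the triangle correction in $\hat a_r$) can have the wrong sign; the paper's computation of $\nu(\eps,\gamma)$ shows the effective coefficient of $-\eps i$ is $1+O(\gamma/\eps)$ at $\delta=1$, which is not $<1$. Taking $\delta=1-\eps$ instead yields a genuine margin of order $\eps i$, and verifying that this margin dominates the $O(\gamma i)$ corrections for all $\gamma\le(\eps/r)^2$ is precisely the content of Sub-claims~\ref{SC_hat:a}--\ref{SC_zeta}; this is not bookkeeping one can defer.

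Second, your claim that ``restricting the sum to a single well-chosen value of $j$ loses nothing'' is false in this induction. The hypothesis carries a fixed constant $\xi$ and no $e^{o(i)}$ or polynomial slack, while Stirling ($i!<3\sqrt i\,(i/e)^i$) costs a factor $\sqrt i\to\infty$ at every step; a single term therefore cannot close the induction with the same $\xi$. The paper recovers this factor by summing over the window $j\in[(1-\eps)i,\,(1-\eps+(\eps/r)^2)i]$, on which $\psi_{r,\eps}(\gamma,\cdot)$ is shown to be increasing (Sub-claim~\ref{SC_dels}), so that the $(\eps/r)^2 i\ge 3\sqrt i$ terms, each bounded below by the value at $\delta=1-\eps$, absorb the Stirling loss. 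So you need both corrections -- shift the chosen $j$ down to $(1-\eps)i$ to create an exponential margin, and keep a window of $\Theta(\eps^2 i)$ values of $j$ rather than one -- before the remaining verification (that the resulting exponent coefficient $\zeta_r(k,i)$ is $<1$) can even be attempted.
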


Claim~\ref{Cl_hat:rhoind} is proved in Appendix~\ref{A_Cl_hat:rhoind}.

Since $h_r(k)\ll k$ and $\xi$ depends only on $r,\eps$, 
the lemma follows by 
Claim~\ref{Cl_hat:rhoind} and \eqref{E_hat:rho}.
\end{proof}

\subsection{$\hat r$-bootstrap percolation on $\Gnp$}
\label{S_hat:Pki}

We define \emph{$\hat r$-percolation,}
a restriction of $r$-percolation,
which informally halts upon requiring a triangle.
Formally, recall the definitions of $I_t(I,G)$ and 
$V_t(I,G)$ given in Section~\ref{S_m}. 
Let $\hat I_t=I_t$ if $G$ contains a triangle-free subgraph
$H$ such that $V_t(I,H)=V_t(I,G)$, and otherwise put 
$\hat I_t=\emptyset$. 
Put $\hat V_t=\bigcup_{s\le t}\hat I_s$. 

\begin{defn}
Let $\hat P_r(k,i)=\hat P_r(p,k,i)$, for some $p=p(n)$,  
denote the probability 
that for a given $I\subset [n]$, with $|I|=r$, 
we have 
that $|\hat V_t(I,\Gnp)|=k$
and $|\hat I_t(I,\Gnp)|=i$, for some  $t$. 
Let $\hat E_r(k,i)$ denote the expected 
number of such subsets $I$. 
We put $\hat P_r(k)=\sum_{i=1}^{k-2} \hat P_r(k,i)$ and  
$\hat E_r(k)=\sum_{i=1}^{k-r}\hat E_r(k,i)$.
\end{defn}

Using Lemma~\ref{L_hat:mkiLB}, we obtain 
lower bounds on the growth probabilities 
of $\hat r$-percolations on $\Gnp$. 

\begin{lem}\label{L_hat:Pki}
Let $\alpha>0$.
Put $p=\vartheta(\alpha,n)$
and $\eps=np^r=\alpha/\log^{r-1}{n}$. 
For $i\le k-r$ and $k\le n^{1/(r(r+1))}$, we have that  
\[
\hat P_r(k,i)
\ge 
(1-o(1))\frac{e^{-\eps{k-i\choose r}}\eps^{k-r}}{(k-r)!} \hat m_r(k,i)
\]
where $o(1)$ depends on $n$, but not on $i,k$.  
\end{lem}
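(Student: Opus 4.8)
The plan is to mirror the upper-bound argument of Lemma~\ref{L_Pki}, but working with $\hat r$-percolation and producing a \emph{lower} bound via a careful inclusion rather than a union bound. Fix $I\subset[n]$ with $|I|=r$. A sufficient event for $\{|\hat V_t(I,\Gnp)|=k,\ |\hat I_t(I,\Gnp)|=i\ \text{for some }t\}$ is the following: there is a set $W\supset I$ of size $k$, a triangle-free minimally susceptible graph $G$ on $W$ with $I$ as contagious set and exactly $i$ vertices in the top level, such that (i) every edge of $G$ is present in $\Gnp$; (ii) no edge of $\Gnp$ joins two vertices of $W$ outside the edge set of $G$ (this keeps the induced subgraph on $W$ equal to $G$, hence triangle-free, and also prevents the percolation from accidentally overshooting size $k$ within $W$); and (iii) no vertex $v\notin W$ is connected in $\Gnp$ to $r$ or more vertices of $V_{t-1}$, i.e.\ to $r$ vertices of $W$ below the top level of $G$. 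Events (i)--(iii) are independent since they concern disjoint sets of potential edges. I would first count the number of pairs $(W,G)$: there are $\binom{n-r}{k-r}$ choices of $W$ and $\hat m_r(k,i)$ choices of $G$ (by the very definition of $\hat m_r(k,i)$, after fixing $[r]\cong I$). Each $G$ has exactly $r(k-r)$ edges, contributing $p^{r(k-r)}$, and one writes $p^{r(k-r)} = (\eps/n)^{k-r}$.

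Next I would bound the probabilities of (ii) and (iii). For (ii), the number of non-edges of $G$ inside $W$ is at most $\binom{k}{2}$, so its probability is at least $(1-p)^{\binom{k}{2}} \ge 1 - p\binom{k}{2} \ge 1-o(1)$, using $p = \vartheta_r(\alpha,n)$ and $k\le n^{1/(r(r+1))}$ (so $pk^2 \le n^{-1/r}\cdot n^{2/(r(r+1))}\log n \ll 1$). For (iii), each of the $n-k$ vertices $v\notin W$ fails exactly when it has at most $r-1$ neighbours below the top level; the probability that a fixed $v$ \emph{is} $r$-connected to the $k-i$ vertices below the top level is at most $\binom{k-i}{r}p^r = \eps\binom{k-i}{r}/n$, so the probability that \emph{no} bad $v$ exists is at least $\bigl(1 - \eps\binom{k-i}{r}/n\bigr)^{n-k} \ge \exp\!\bigl(-\eps\binom{k-i}{r}(1+o(1))\bigr)$, where the $1+o(1)$ absorbs the difference between $-\log(1-x)$ and $x$ as well as the $n-k$ versus $n$ discrepancy, uniformly in $i,k$ in the stated range (this is the same estimate as in Lemma~\ref{L_Pki}, in the other direction). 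Finally, the labels: we are counting subsets $I$, and $G$ is defined on $W$ with $[r]$ playing the role of $I$, so there is a combinatorial factor; tracking this as in the proof of Lemma~\ref{L_Pki} converts $\binom{n-r}{k-r}\hat m_r(k,i)(\eps/n)^{k-r}$ into $\eps^{k-r}\hat m_r(k,i)/(k-r)!$ up to $1+o(1)$. Multiplying the three contributions gives
\[
  \hat P_r(k,i) \ge (1-o(1))\,\frac{e^{-\eps\binom{k-i}{r}}\eps^{k-r}}{(k-r)!}\,\hat m_r(k,i),
\]
with all $o(1)$ terms depending only on $n$.

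The main obstacle is ensuring that the sufficient event genuinely forces $|\hat V_t|=k$ with $i$ vertices in the top level, and in particular that the $\hat r$-percolation does not stall early because some \emph{intermediate} set $V_s$, $s<t$, fails to be realizable by a triangle-free subgraph of $\Gnp$. Condition (ii) is the key device here: if the induced subgraph of $\Gnp$ on $W$ is exactly $G$, then for every $s$ the set $V_s(I,\Gnp)\cap W$ is infected using only edges of $G$, which is triangle-free, so $\hat I_s = I_s$ for all $s\le t$; one must also check that condition (iii) prevents $V_s$ from leaving $W$ before step $t$, which follows because leaving $W$ requires some $v\notin W$ with $r$ neighbours in $V_{s-1}\subset W$, and since $V_{s-1}$ contains at most $k-i$ vertices below the eventual top level plus possibly some top-level vertices — here I would be slightly more careful and note that at every step before $t$ the infected set is contained in the first $k-i$ vertices of $G$ in the natural order plus top-level vertices not yet all infected, so the relevant count is at most $\binom{k-i}{r}p^r$ only once all of $V_{t-1}$ is in place; a cleaner route is to demand in (iii) that no $v\notin W$ is $r$-connected to $W$ at all, at the cost of replacing $\binom{k-i}{r}$ by $\binom{k}{r}$, and then observe $\eps\binom{k}{r} = \eps\binom{k-i}{r}(1+o(1))$ is \emph{false} in general, so one really does want the finer statement. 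I would therefore keep (iii) as stated (only $r$-connections below the top level are forbidden) and argue that a vertex outside $W$ becoming infected strictly before step $t$ would itself need $r$ neighbours below the top level of $G$, hence is already excluded; a vertex outside $W$ acquiring its $r$th neighbour from the top level exactly at step $t$ does not matter, since by then $|V_t\cap W|=k$ already and we only claim the stated event for \emph{some} $t$. This bookkeeping, together with the uniform-in-$(i,k)$ control of the $e^{-\eps\binom{k-i}{r}}$ factor, is where the care lies; the probability estimates themselves are the routine ones from Lemma~\ref{L_Pki}.
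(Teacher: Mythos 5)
Your proposal is correct and follows essentially the same route as the paper: the same sufficient event (induced subgraph on $W$ equal to a triangle-free minimally susceptible $G$, no outside vertex $r$-connected below the top level), the same count $\binom{n-r}{k-r}\hat m_r(k,i)p^{r(k-r)}$, and the same uniform $(1-o(1))$ estimates for the non-edge and outside-vertex factors, with the additive-$o(1)$-in-the-exponent control carried out exactly as in Lemma~\ref{L_Pki}. Your extra verification that the event really forces $\hat V_t=W$ with $\hat I_t$ the top level of $G$ (no stalling, no overshooting, no premature escape from $W$) is a correct elaboration of the step the paper states without proof.
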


\begin{proof}
Let $I\subset[n]$, with $|I|=r$, be given. Put 
\[
\hat\ell_r(k,i)
=\frac{e^{-\eps{k-i\choose r}}\eps^{k-r}}{(k-r)!} \hat m_r(k,i).
\]
If for some $V\subset[n]$ 
with $|V|=k$ and $I\subset V$ we have that 
the subgraph 
$G_V\subset\Gnp$ induced by $V$ is minimally susceptible
and triangle-free,  
$I$ is a contagious set for $G_V$ with $i$ vertices in the top level,
and all vertices in $v\in V^c$ are connected to at most
$r-1$ vertices below the top level of $G_V$, 
then it follows that  $|\hat V_t(I,\Gnp)|=k$
and $|\hat I_t(I,\Gnp)|=i$ for some $t$.
Hence  
\[
\hat P_r(k,i)>
{n-r\choose k-r}\hat m_r(k,i)p^{r(k-r)}(1-p)^{k^2}
\left(1-{k-i\choose r}p^r\right)^n.
\]  

By the inequalities ${n\choose k}\ge(n-k)^k/k!$ and 
$(1-x/n)^n\ge e^{-x}(1-x^2/n)$,
it follows that  
\[
\frac{\hat P_r(k,i)}{\hat \ell_r(k,i)}
>
\left(1-\frac{k}{n}\right)^k(1-p)^{k^2}
\left(1-{k-i\choose r}^2\frac{\eps^2}{n}\right)
\]
For all large $n$, the right hand side is bounded from below by 
\[
\left(1-\frac{k}{n}\right)^k
\left(1-\frac{1}{n^{1/r}}\right)^{k^2}
\left(1-\frac{k^{2r}}{n}\right)\sim1
\]
since $k\le n^{1/(r(r+1))}\ll n^{1/(2r)}$, as $r\ge2$. 
It follows that $\hat P_r(k,i)\ge(1-o(1))\hat \ell_r(k,i)$, 
where $o(1)$ depends on $n$, 
but not on $i,k$, as required. 
\end{proof}

\subsection{Super-critical bounds}
\label{S_hat:sup-rperc}

In this section  
we show that, for $\alpha>\alpha_r$, 
the expected number of 
super-critical $\hat r$-percolations  on $\Gnp$
which grow larger than a critical size 
$\beta_*(\alpha)\log{n}>\beta_r(\alpha)\log{n}$ 
 is large.
The importance of $\beta_*(\alpha)$ is 
established in Section~\ref{S_term} below.
Subsequent sections establish
the existence of sets $I$ of size $r$
so that $\hat r$-percolation initialized at $I$
grows larger than $\beta_*(\alpha)\log{n}$. 

\begin{lem}\label{L_hat:EkiLB}
Let $\alpha,\beta_0>0$ and $\eps\in(0,1/(r+1))$.
Put $p=\vartheta(\alpha,n)$. For all sufficiently large 
$k=\beta\log{n}$ 
and $i=\gamma k$, with $\beta\le\beta_0$ 
and $\gamma\le(\eps/r)^2$,  
we have that  
\[
\hat E_r(k,i)\ge n^{\mu_\eps-o(1)}
\] 
where
\[
\mu_\eps=
\mu_{r,\eps}(\alpha,\beta,\gamma)
=
r+
\beta\log\left(\frac{\alpha\beta^{r-1}(1-\gamma)}{(r-1)!}\right)
-\frac{\alpha\beta^r}{r!}(1-\gamma)^r
-\beta(r-2+\eps\gamma) 
\] 
and $o(1)$ depends on $\alpha,\eps,\beta_0$, 
but not on $\beta,\gamma$. 
\end{lem}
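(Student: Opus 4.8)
The plan is to combine the lower bound for $\hat P_r(k,i)$ from Lemma~\ref{L_hat:Pki} with the lower bound for $\hat m_r(k,i)$ from Lemma~\ref{L_hat:mkiLB}, and then extract the exponent of $n$. First I would write
\[
\hat E_r(k,i) = {n\choose r}\hat P_r(k,i)
\ge (1-o(1)){n\choose r}\frac{e^{-\eps{k-i\choose r}}\eps^{k-r}}{(k-r)!}\hat m_r(k,i),
\]
valid since $k=\beta\log n\le\beta_0\log n\le n^{1/(r(r+1))}$ for $n$ large, so that Lemma~\ref{L_hat:Pki} applies. Since $\gamma=i/k\le(\eps/r)^2$ and $k$ is large, Lemma~\ref{L_hat:mkiLB} gives
\[
\hat m_r(k,i)\ge e^{-i\eps-(r-2)k-o(k)}(k-r)!\left(\frac{(k-i)k^{r-2}}{(r-1)!}\right)^k,
\]
with the $o(k)$ uniform in $i$. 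Substituting, the factorials $(k-r)!$ cancel, and I am left with
\[
\hat E_r(k,i)\ge (1-o(1))\,\frac{n^r}{r!}\,
e^{-\eps{k-i\choose r}}\,\eps^{k-r}\,
e^{-i\eps-(r-2)k-o(k)}\left(\frac{(k-i)k^{r-2}}{(r-1)!}\right)^k.
\]

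Next I would take logarithms and collect the terms of order $\log n$, using $\eps=\alpha/\log^{r-1}n$, $k=\beta\log n$, $i=\gamma k$, so $k-i=(1-\gamma)k$. The leading contributions are: $r\log n$ from $n^r/r!$; from $\eps^{k}\bigl((k-i)k^{r-2}/(r-1)!\bigr)^k$ one gets $k\log\bigl(\eps(k-i)k^{r-2}/(r-1)!\bigr) = k\log\bigl(\alpha(1-\gamma)\beta^{r-1}/(r-1)!\bigr) = \beta\log n\cdot\log\bigl(\alpha\beta^{r-1}(1-\gamma)/(r-1)!\bigr)$ after the powers of $\log n$ from $\eps$ and from $k^{r-1}$ cancel; from $e^{-\eps{k-i\choose r}}$ one gets $-\eps(k-i)^r/r!\,(1+o(1)) = -\alpha\beta^r(1-\gamma)^r/r!\cdot\log n$ (again the $\log^{r-1}n$ in $\eps$ cancels against $(\log n)^{r-1}$ in $(k-i)^{r-1}$, leaving one factor $\log n$); from $e^{-(r-2)k}$ one gets $-(r-2)\beta\log n$; and from $e^{-i\eps}=e^{-\gamma\beta\eps\log n}$ one gets $-\eps\gamma\beta\log n$. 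The factor $\eps^{-r}$ and the lower-order factorial and binomial corrections contribute only $O(\log\log n)=o(\log n)$, as does the $o(k)$ term and the $(1-o(1))$ prefactor. Adding these up and dividing by $\log n$ yields exactly
\[
\mu_\eps = r+\beta\log\left(\frac{\alpha\beta^{r-1}(1-\gamma)}{(r-1)!}\right)-\frac{\alpha\beta^r}{r!}(1-\gamma)^r-\beta(r-2+\eps\gamma),
\]
so $\hat E_r(k,i)\ge n^{\mu_\eps-o(1)}$, with the $o(1)$ absorbing all the contributions listed above and depending only on $\alpha,\eps,\beta_0$ (through $k_r$ and $f_r$ in Lemma~\ref{L_hat:mkiLB}, and through the range $\beta\le\beta_0$), not on $\beta,\gamma$ themselves.

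The one point requiring care — and the main obstacle — is the uniformity of the error term in $\beta$ and $\gamma$. I need to check that each $o(\log n)$ correction can be bounded by a single function of $n$ independent of $\beta\in(0,\beta_0]$ and $\gamma\in(0,(\eps/r)^2]$. For the binomial/factorial corrections this follows since $k\le\beta_0\log n$ gives $\log(k-r)!$, $\log{k-i\choose r}$, etc., all $O(\log n\cdot\log\log n)$ with constants depending only on $\beta_0$; for the $o(k)$ term from Lemma~\ref{L_hat:mkiLB} this is already asserted to be uniform in $i$, and since $k\le\beta_0\log n$ it is $o(\log n)$ with the implied rate depending only on $\eps$ and $\beta_0$; and the approximation $\eps{k-i\choose r}=\alpha\beta^r(1-\gamma)^r/r!\cdot\log n\,(1+o(1))$ has relative error $O(1/k)=O(1/\log n)$, hence absolute error $o(\log n)$ uniformly since the leading coefficient is bounded by $\alpha\beta_0^r/r!$. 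Collecting, $\hat E_r(k,i)\ge n^{\mu_\eps}n^{-o(1)}$ with $o(1)\to0$ uniformly over the stated range of $\beta,\gamma$, which is the claim.
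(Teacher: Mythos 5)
Your proposal is correct and follows exactly the paper's (very terse) proof: combine the lower bounds of Lemma~\ref{L_hat:Pki} and Lemma~\ref{L_hat:mkiLB}, cancel the $(k-r)!$ factors, and read off the exponent term by term, with the uniformity in $\beta,\gamma$ coming from the uniform $o(k)$ in Lemma~\ref{L_hat:mkiLB} and the bound $k\le\beta_0\log n$. One small point of hygiene: you use the symbol $\eps$ both for $np^r=\alpha/\log^{r-1}n$ (in the $e^{-\eps\binom{k-i}{r}}\eps^{k-r}$ factor) and for the fixed parameter $\eps\in(0,1/(r+1))$ (in the $e^{-i\eps}$ factor, which produces the $-\beta\eps\gamma$ term of $\mu_\eps$); the paper avoids this clash by renaming $np^r$ as $\delta$ in the proof, and you should do likewise, but your arithmetic is consistent with the intended reading.
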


\begin{proof}
Put $\delta=np^2$. 
By 
Lemmas~\ref{L_hat:mkiLB},\ref{L_hat:Pki}, 
for large $k=\beta\log{n}$ 
and $i=\gamma k$, with $\beta\le\beta_0$ 
and $\gamma\le(\eps/r)^2$,
\[
\hat E_r(k,i)
\ge
\xi(n){n\choose r}
\left(\frac{\delta (k-i)k^{r-2}}{(r-1)!}\right)^k \delta^{-r}
e^{-i\eps-(r-2)k-\delta{k-i\choose r}-o(k)} 
=n^{\mu_\eps-o(1)}
\]
where 
$\xi(n)\sim1$ depends only on $n$, and 
$o(k)$ depends only on $r,\eps,\beta_0$.
\end{proof}

We note that, for any $\alpha,\eps>0$, 
\begin{equation}\label{E_mueps-gam0}
\mu_{r,\eps}(\alpha,\beta,0)=\mu_r^*(\alpha,\beta). 
\end{equation}

We now state the main result of this section. 

\begin{lem}\label{L_mueps}
Let $\eps<1/(r+1)$. Put 
$\alpha_{r,\eps}=(1+\eps)\alpha_r$ 
and $p=\vartheta(\alpha_{r,\eps},n)$. 
For some $\delta(r,\eps)>0$ and $\zeta(r,\eps)>0$ 
we have that 
if $k_n/\log{n}\in[\beta_*(\alpha_{r,\eps}), 
\beta_*(\alpha_{r,\eps})+\delta]$ 
for all large $n$, then 
$\hat E_{r}(k_n)\gg n^{\zeta}$ as $n\to\infty$. 
\end{lem}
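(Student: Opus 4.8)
The plan is to combine the per-value lower bound of Lemma~\ref{L_hat:EkiLB} with a careful choice of the parameter $\gamma$ (the proportion of vertices in the top level), optimizing the exponent $\mu_{r,\eps}(\alpha_{r,\eps},\beta,\gamma)$ over the admissible range, and then to show that at $\alpha=\alpha_{r,\eps}=(1+\eps)\alpha_r$ there is a genuinely positive-exponent window of $\beta$ just above $\beta_*(\alpha_{r,\eps})$.

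First I would fix $\eps<1/(r+1)$ and write $\alpha=\alpha_{r,\eps}$, $\beta_*=\beta_*(\alpha)$. The key observation is that by \eqref{E_mueps-gam0} we have $\mu_{r,\eps}(\alpha,\beta,0)=\mu_r^*(\alpha,\beta)$, and since $\alpha>\alpha_r$, Lemma~\ref{L_beta*}(ii) gives $\beta_*(\alpha)>\beta_r(\alpha)$, with $\mu_r^*(\alpha,\beta)>0$ precisely for $\beta<\beta_*$. So at $\gamma=0$ the exponent vanishes exactly at $\beta_*$. The point is that allowing $\gamma>0$ strictly helps: I would compute
\[
  \left.\frac{\partial}{\partial\gamma}\mu_{r,\eps}(\alpha,\beta,\gamma)\right|_{\gamma=0}
  = -\beta + \beta\,\frac{\alpha\beta^{r-1}}{(r-1)!} - \eps\beta
  = \beta\left(\frac{\alpha\beta^{r-1}}{(r-1)!} - 1 - \eps\right),
\]
and since $\beta_*(\alpha)>\beta_r(\alpha)$, at $\beta=\beta_*$ we have $\alpha\beta_*^{r-1}/(r-1)!>1$; choosing $\eps$ small enough (or rather, since $\eps$ is fixed first, noting $\beta_*(\alpha_{r,\eps})\to(r/(r-1))^2$ as $\eps\to 0$ is bounded away from $\beta_r$ by a margin comparable to $\eps$ — this needs a short quantitative check) this derivative is positive. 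Hence for a suitable small $\gamma_0=\gamma_0(r,\eps)>0$ with $\gamma_0\le(\eps/r)^2$, we get $\mu_{r,\eps}(\alpha,\beta_*,\gamma_0)>0$. By continuity of $\mu_{r,\eps}$ in $\beta$, there is $\delta=\delta(r,\eps)>0$ and $\zeta=\zeta(r,\eps)>0$ so that $\mu_{r,\eps}(\alpha,\beta,\gamma_0)\ge 2\zeta$ for all $\beta\in[\beta_*,\beta_*+\delta]$.

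Then, for $k_n=\beta_n\log n$ with $\beta_n\in[\beta_*,\beta_*+\delta]$, I set $i_n=\lfloor\gamma_0 k_n\rfloor$ so that $\gamma=i_n/k_n\le(\eps/r)^2$ and $\beta_n\le\beta_0:=\beta_*+\delta$. Lemma~\ref{L_hat:EkiLB} then yields
\[
  \hat E_r(k_n)\ge \hat E_r(k_n,i_n)\ge n^{\mu_{r,\eps}(\alpha,\beta_n,\gamma)-o(1)}\ge n^{2\zeta-o(1)}\gg n^{\zeta}
\]
as $n\to\infty$, which is the claim. One should also check that the admissibility hypothesis of Lemma~\ref{L_hat:EkiLB} (namely $k$ sufficiently large, which holds since $k_n=\Theta(\log n)\to\infty$) and the constraint $k\le n^{1/(r(r+1))}$ implicit through Lemma~\ref{L_hat:Pki} are met; the latter is automatic since $k_n=O(\log n)$.

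The main obstacle is the quantitative claim that, for all $\eps<1/(r+1)$, the derivative $\partial_\gamma\mu_{r,\eps}$ at $(\beta_*,0)$ is strictly positive — equivalently that $\alpha_{r,\eps}\beta_*(\alpha_{r,\eps})^{r-1}/(r-1)!>1+\eps$. Since $\alpha_{r,\eps}=(1+\eps)\alpha_r$ and $\beta_r(\alpha_{r,\eps})=((r-1)!/\alpha_{r,\eps})^{1/(r-1)}$ satisfies $\alpha_{r,\eps}\beta_r(\alpha_{r,\eps})^{r-1}/(r-1)!=1$ by definition, what is needed is that $\beta_*(\alpha_{r,\eps})$ exceeds $\beta_r(\alpha_{r,\eps})$ by a definite multiplicative factor: precisely $(\beta_*/\beta_r)^{r-1}>1+\eps$. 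This should follow by examining the behaviour of $\mu_r^*$ near its zero: since $\mu_r^*(\alpha,\beta_r(\alpha))=r-\beta_r(\alpha)(r-1)^2/r>0$ for $\alpha<\alpha_r$ and, more to the point here, a Taylor expansion of $\mu_r^*(\alpha_{r,\eps},\cdot)$ around $\beta_r(\alpha_{r,\eps})$ (where $\partial_\beta\mu_r^*$ vanishes) shows $\mu_r^*(\alpha_{r,\eps},\beta_r(\alpha_{r,\eps}))$ is of order $\eps$ and the second derivative is bounded, forcing $\beta_*-\beta_r$ to be of order $\sqrt{\eps}$, hence $(\beta_*/\beta_r)^{r-1}-1$ of order $\sqrt\eps$, which dominates $\eps$ for $\eps$ small. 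For $\eps$ not small (up to $1/(r+1)$) one checks the inequality directly or simply notes the argument only requires $\eps<1/(r+1)$ to be compatible with $\gamma_0\le(\eps/r)^2$ while $\delta,\zeta$ are allowed to depend on $r,\eps$. This monotonicity/convexity bookkeeping for $\mu_r^*$ is the only non-routine part; everything else is assembling the already-proven lemmas.
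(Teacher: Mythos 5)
Your strategy is, at its core, the same as the paper's: apply Lemma~\ref{L_hat:EkiLB} at a suitably chosen top-level fraction $\gamma_0\in(0,(\eps/r)^2]$, show that $\mu_{r,\eps}(\alpha_{r,\eps},\beta_*,\gamma_0)>0$, and extend to $\beta\in[\beta_*,\beta_*+\delta]$ by continuity (the $o(1)$ in Lemma~\ref{L_hat:EkiLB} is uniform in $\beta,\gamma$, so this step is legitimate, as is replacing $\gamma_0$ by $\lfloor\gamma_0 k_n\rfloor/k_n$). Where you genuinely differ is in how positivity at $\beta_*$ is obtained: you differentiate in $\gamma$ at $\gamma=0$, using \eqref{E_mueps-gam0} and $\mu_r^*(\alpha_{r,\eps},\beta_*)=0$, which reduces everything to the single inequality $\alpha_{r,\eps}\beta_*^{r-1}/(r-1)!>1+\eps$. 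The paper instead introduces the maximizer $\gamma_{r,\eps}^*(\beta)$ of $\mu_{r,\eps}$ in $\gamma$ and shows that $\mu_{r,\eps}(\beta,\gamma_{r,\eps}(\beta))-\mu_r^*(\beta)$ vanishes up to $\beta_{r,\eps}=(1+\eps)^{1/(r-1)}\beta_r$ and is increasing thereafter (Claims~\ref{Cl_gam*} and~\ref{Cl_omega}); but the crucial input there is exactly your inequality in the equivalent form $\beta_{r,\eps}<\beta_*$ (Claim~\ref{Cl_beta}). Your shortcut is simpler and suffices, since the lemma only requires a small window above $\beta_*$.

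The one point you have not closed is precisely that inequality on the full range $\eps<1/(r+1)$. Your small-$\eps$ argument is fine (value $\Theta(\eps)$ at $\beta_r$, vanishing first derivative, bounded second derivative, hence $\beta_*-\beta_r\gtrsim\sqrt{\eps}\gg\eps$; in fact $\partial_\beta^2\mu_r^*$ also vanishes at $\beta_r$, so the true order is $\eps^{1/3}$, which only helps you), but for general $\eps$ you defer to ``check directly'' without doing it. The check does go through and is short: by Lemma~\ref{L_beta*}, $\beta_{r,\eps}<\beta_*$ is equivalent to $\mu_r^*(\alpha_{r,\eps},\beta_{r,\eps})>0$, and since $r-2+1/r=(r-1)^2/r$ one computes
\[
\mu_r^*(\alpha_{r,\eps},\beta_{r,\eps})
=\left(\frac{r}{r-1}\right)^2\left(\log(1+\eps)-\frac{\eps}{r}\right)>0,
\]
because $\log(1+\eps)>\eps/(1+\eps)\ge\eps/r$ whenever $\eps\le r-1$, which certainly holds for $\eps<1/(r+1)$. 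With that supplied, your proof is complete.
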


The proof appears in Appendix~\ref{A_L_mueps}.
The argument is technical but straightforward: the basic idea
is to show that, for some $\zeta>0$ and all large $n$,  
for all relevant $k$ there is some $i$
so that $\hat E_r(k,i)>n^{\zeta}$. 
For $k>\beta_*\log{n}$, values of $i$ 
with this property are on the order of $k$. 
We shall thus require Lemma~\ref{L_hat:mkiLB}.

\subsection{$\hat r$-percolations are almost independent}
\label{S_hat:rperc-ai}

For a set $I\subset[n]$, with $|I|=r$, let $\hat {\cal E}_k(I)$ denote
the event that $\hat r$-percolation on $\Gnp$ initialized by $I$ grows
to size $k$, i.e., we have that $|\hat V_t(I)|=k$ for some $t$.  Hence
$\hat P_r(k)=\P(\hat {\cal E}_k(I))$. 
In this section we show that for sets $I\neq I'$ of size $r$ and
suitable values of $k,p$, the events $\hat {\cal E}_k(I)$ and
$\hat {\cal E}_k(I')$ are approximately independent.  Specifically, we
establish the following

\begin{lem}\label{L_hat:cr2mm}
  Let $\alpha,\beta>0$ and put $p=\vartheta_r(\alpha,n)$.  Fix sets
  $I\neq I'$ such that $|I|=|I'|=r$ and $|I\cap I'|=m$.  For
  $\beta\log{n}\le k\le n^{1/(r(r+1))}$, we have that
  \[
    \P(\hat {\cal E}_k(I')|\hat {\cal E}_k(I)) \le 
    \left(\tfrac{k}{n}\right)^{r-m} 
    +O\big(k^{2r}(kp)^{r(r-m)}\big) + 
    \begin{cases}
      (1+o(1))\hat P_r(k) &\text{if $m=0$,} \\
      o(\left(\tfrac{n}{k}\right)^{m})\hat P_r(k) 
      &\text{if $1\leq m < r$,} \\
    \end{cases}
  \]
  where $o(1)$ depends only on $n$.
\end{lem}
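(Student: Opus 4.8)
\emph{Proof plan.} The argument is an exposure step followed by a union bound, with the triangle-free restriction entering through Mantel's theorem. Conditionally on $\hat{\cal E}_k(I)$, let $t$ be the step at which $|\hat V_t(I)|=k$, put $W=\hat V_t(I)$, and fix a triangle-free minimally susceptible $H\subseteq\Gnp[W]$ with $\langle I,H\rangle_r=W$. Since the event that $\hat{\cal E}_k(I)$ occurs through the cluster $W$ is measurable with respect to the edges of $\Gnp$ incident to $W$, I would first condition on $W$ and then reveal all such edges --- both those inside $W$ and those between $W$ and $W^c:=[n]\setminus W$. Conditionally on this, the edges inside $W^c$ remain i.i.d.\ $\mathrm{Bernoulli}(p)$, independent of everything revealed.

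For $\hat{\cal E}_k(I')$ it suffices that there exist $W'\supseteq I'$ with $|W'|=k$ and a triangle-free minimally susceptible $H'\subseteq\Gnp[W']$ with $\langle I',H'\rangle_r=W'$, and I would bound $\P(\hat{\cal E}_k(I')\mid\cdot)$ by summing over all such pairs $(W',H')$. In this sum, only the edges of $H'$ lying strictly inside $W^c$ carry a factor $p$; the remaining edges of $H'$ (inside $W$, or crossing from $W$ to $W^c$) must match the already-revealed configuration. I would organize the sum by $q:=|W\cap W'|$. The crucial point is that $H'[W\cap W']$, being a subgraph of the triangle-free graph $H'$, has at most $q^2/4$ edges by Mantel's theorem; hence at most $O(k^2)$ of the $r(k-r)$ edges of $H'$ can sit inside $W$ at no cost, and the large majority must lie in $W^c$, each costing a factor $p$. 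For $q$ bounded away from $k$ this reconstructs, after bookkeeping, the unconditional estimate for $\hat P_r(k)$ provided by Lemmas~\ref{L_mUB} and~\ref{L_hat:Pki}.

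The three terms in the statement then arise as follows. If $I'\setminus I\subseteq W$ (in particular if $W'\subseteq W$), then the $r-m$ fixed vertices of $I'\setminus I$, which are disjoint from $I$, all land in the random cluster $W$; conditionally on $\hat{\cal E}_k(I)$ this has probability $\lesssim(k/n)^{r-m}$, since $W$ behaves like a uniform $k$-subset containing $I$, and here I would simply bound the conditional probability of $\hat{\cal E}_k(I')$ by $1$. This is the first term. When $W'$ meets $W$ but is not contained in it, a percolation from $I'$ that re-uses the exposed cluster must, because $I'$ is missing $r-m$ of the vertices of $I$, supply roughly $r-m$ additional vertices each $r$-connected into the structure, at a cost of about $(kp)^r$ apiece (choosing $r$ of at most $k$ endpoints and paying $p$ per edge); together with a polynomial overhead $k^{2r}$ accounting for which of the $\le r(k-r)$ edges of $H'$ touch $W$, this produces the $O\big(k^{2r}(kp)^{r(r-m)}\big)$ term. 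Finally, when $H'$ is essentially disjoint from the exposed structure the sum simply reproduces $\hat P_r(k)$: if $m=0$ the exposed edges are irrelevant to a fresh percolation from $I'$, giving $(1+o(1))\hat P_r(k)$; if $1\le m<r$, the $m$ shared vertices of $I\cap I'$ have already had their $W$-incident edges revealed, which can only help, and contributes a boost of at most $o\big((n/k)^m\big)$.

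The main obstacle is the intermediate-overlap regime: showing that, summed over all triangle-free percolating $H'$ partially overlapping $W$, the total is genuinely dominated by the two error terms. This is exactly where triangle-freeness is indispensable --- without it, $H'$ could place $\Theta(k^2)$ of its edges inside $W$ at no cost, which would change the order of magnitude, whereas Mantel's theorem caps the free edges at $O(k^2)$ and thereby forces all but $O(k^2)$ of the $r(k-r)$ edges of $H'$ out into $W^c$, each then paid for by a factor $p\to0$ that dwarfs the polynomial-in-$k$ combinatorial gains. Secondary technical points are justifying the uniform-$k$-subset comparison for $W$ used in the first term, and verifying that revealing all edges incident to $W$ is legitimate, i.e.\ that the residual law of $\Gnp$ on $W^c$ is exactly a product measure.
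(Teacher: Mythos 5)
Your overall skeleton is the same as the paper's: decompose by the overlap $q=|W\cap W'|$, charge each edge of $H'$ not contained in $W$ a factor $p$ via an FKG-type conditional bound, and use Mantel's theorem to control the ``free'' edges inside the overlap. The $(k/n)^{r-m}$ term (from $I'\setminus I\subset W$, bounding the conditional probability by $1$) and the small-overlap/disjoint regimes are handled essentially as in the paper, which fixes representative sets $V_\ell$ with $|V_\ell\cap I'|=\ell$, writes $\P(\hat{\cal E}_k(I')\mid\hat{\cal E}_k(I))$ as a weighted sum of $\P(\hat{\cal E}_k(I')\mid\hat{\cal E}(I,V_\ell))$, and splits each of these by $q$ in Lemma~\ref{L_corr}.

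There is, however, a genuine gap in your treatment of the large-overlap regime, which is exactly where the term $O\big(k^{2r}(kp)^{r(r-m)}\big)$ has to come from. Your claim that Mantel's theorem ``caps the free edges at $O(k^2)$ and thereby forces all but $O(k^2)$ of the $r(k-r)$ edges of $H'$ out into $W^c$'' is vacuous: $H'$ is minimally susceptible and has only $r(k-r)=O(k)$ edges in total, so a cap of $q^2/4$ on the free edges constrains nothing once $q\gtrsim\sqrt{rk}$. Quantitatively, placing $q$ vertices of $W'$ inside $W$ costs about $(k^2/n)^q$ in the union bound but can save up to $p^{-rq}$ in edge costs, and $k^2p^{-r}/n=k^2/(np^r)=k^2\log^{r-1}n/\alpha\to\infty$, so the global Mantel accounting loses for every $q\geq 2r-1$; this is why the paper only uses it for $q<2r-1$ (where $p^{-q^2/4}$ is affordable). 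For $q\geq 2r-1$ the paper switches to a \emph{local} application of Mantel: follow the $\hat r$-percolation from $I'$ and stop at the first $q_*=2r-1-\ell$ vertices of $V\setminus I'$ that it infects. Each such vertex has $r$ down-edges into the already-infected set, and Mantel applied to the triangle-free graph on these $2r-1$ vertices, together with the identity \eqref{E_nu} ($r(2r-1)-\lfloor(2r-1)^2/4\rfloor=r^2$), forces at least $r(r-\ell)$ of these edges to land on fresh vertices outside $V$; each such edge costs $kp$ by Lemma~\ref{L_hat:EV}(i), and the $k^{q_*}\le k^{2r-1}$ choices of these vertices give the stated error term. Your heuristic of ``$r-m$ additional vertices at $(kp)^r$ apiece'' does not substitute for this, because a vertex re-entering $W$ could a priori draw all $r$ of its down-edges from edges already present inside $W$ at no cost; the threshold $2r-1$ and the identity \eqref{E_nu} are precisely what rule this out and produce the exponent $r(r-m)$.
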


\newcommand{\cE}{{\cal E}}

For sets $I\subset V$ of sizes $r$ and $k$, let $\hat\cE(I,V)$ be the
event that for some $t$ we have $\hat V_t(I)=V$.  By symmetry these
events all have the same probability. Since for a fixed $I$ and
different sets $V$ these events are disjoint, we have
$\hat P_r(k) = \binom{n-r}{k-r} \P(\hat\cE(I,V))$.

\begin{lem}\label{L_hat:EV}
Fix sets $I\subset V$ with $|I|=r$ and $|V|=k$. 
\begin{enumerate}[nolistsep,label={(\roman*)}]

\item For any set of edges $E\subset[n]^2-V^2$, the conditional
  probability that $E\subset E(\Gnp)$, given $\hat \cE(I,V)$, is at most
  $p^{|E|}$.

\item For any $u\notin V$ and set of vertices $W\subset[n]$ such that
  $|W|=r$ and $|V\cap W|<r$, the conditional probability that
  $(u,w)\in\Gnp$ for all $w\in W$, given $\hat \cE(I,V)$, is at least
  $p^r(1-p)^k$.
\end{enumerate}
\end{lem}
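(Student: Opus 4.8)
The key observation is that $\hat\cE(I,V)$ is a \emph{monotone-free} event in a precise sense: it depends only on the edges \emph{inside} $V$, and moreover it is a decreasing function of the edges of $\Gnp$ that lie within $V$ but outside the triangle-free witness subgraph, and entirely independent of edges with an endpoint outside $V$. So the plan is to expose the randomness carefully: condition on $\hat\cE(I,V)$, which is determined by $E(\Gnp)\cap V^2$, and then argue about the (still unconditioned) edges meeting $V^c$.

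\emph{Part (i).} Fix $E\subset[n]^2\setminus V^2$. Since $\hat\cE(I,V)$ is measurable with respect to $\sigma(E(\Gnp)\cap V^2)$, and the edges in $E$ are disjoint from $V^2$, the two are independent; hence the conditional probability that $E\subset E(\Gnp)$ given $\hat\cE(I,V)$ equals the unconditional probability $p^{|E|}$. This gives the bound with equality, so certainly ``at most $p^{|E|}$''. (The inequality formulation presumably anticipates a later variant where one conditions on slightly more; but as stated, independence suffices.)

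\emph{Part (ii).} Fix $u\notin V$ and $W$ with $|W|=r$, $|W\cap V|<r$. We want a \emph{lower} bound on $\P((u,w)\in\Gnp\ \forall w\in W\mid \hat\cE(I,V))$. Write $W=(W\cap V)\sqcup W'$ with $W'\neq\emptyset$. The edges $\{(u,w):w\in W'\}$ are disjoint from $V^2$ (as $u\notin V$), hence independent of $\hat\cE(I,V)$, contributing a factor $p^{|W'|}$ unconditionally. The troublesome edges are $\{(u,w):w\in W\cap V\}$: these touch $V$ but, crucially, have $u\notin V$ as their other endpoint, so they too lie outside $V^2$ and are therefore independent of $\hat\cE(I,V)$ as well. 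Thus in fact \emph{every} edge $(u,w)$, $w\in W$, lies outside $V^2$ (since $u\notin V$), so the whole event $\{(u,w)\in\Gnp\ \forall w\in W\}$ is independent of $\hat\cE(I,V)$ and has probability exactly $p^r\ge p^r(1-p)^k$. The stated $(1-p)^k$ slack must then be there to absorb a mild strengthening used downstream — perhaps conditioning additionally on $u\notin\hat V_t$, i.e.\ on $u$ having at most $r-1$ neighbours below the top level of the witness graph, which is a decreasing event in the edges from $u$ to $V$; since we are asking for edges from $u$ to the specific set $W$ with $|W\cap V|<r$, one can reveal the $\le k$ relevant edges from $u$ into $V$ and note the conditioning costs at most a factor $(1-p)^{k}$. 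Either way, the bound holds.

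\emph{Main obstacle.} The only real subtlety is making precise what ``$\hat\cE(I,V)$ depends only on edges inside $V$'' means, given that $\hat r$-percolation is defined via the \emph{existence} of a triangle-free witness subgraph $H$ with $V_t(I,H)=V_t(I,G)=V$. One must check that whether such an $H$ exists, and whether it fills exactly $V$ at some time $t$, is a function of $E(\Gnp)\cap V^2$ alone — edges leaving $V$ can only add vertices outside $V$ and so are irrelevant to the event $\hat V_t=V$ at the \emph{first} such time $t$. Once this measurability is nailed down, both parts reduce to the trivial observation that edges with an endpoint in $V^c$ are independent of everything inside $V^2$, and the $(1-p)^k$ factors are harmless slack accommodating the ``below the top level'' conditioning that appears when this lemma is invoked.
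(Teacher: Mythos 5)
Your central premise --- that $\hat\cE(I,V)$ is measurable with respect to $E(\Gnp)\cap V^2$ alone --- is false, and this is where the argument breaks. The event $\hat\cE(I,V)$ is that $\hat V_t(I)=V$ for some $t$, and $\hat V_t$ is built from the levels $I_s(I,\Gnp)$ of the bootstrap process on the \emph{whole} graph. For the infection to fill exactly $V$ and no more, every vertex $u\in V^c$ must have at most $r-1$ neighbours among the vertices of $V$ lying below the top level of the witness subgraph $H$ (otherwise $u$ would itself be infected at or before time $t$ and $V_t(I,\Gnp)\neq V$). So $\hat\cE(I,V)$ genuinely depends on the edges crossing from $V^c$ into $V$: it is increasing in the edges of the induced subgraph $\G_V$ and \emph{decreasing} in the edges outside $V$. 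Your claim that edges leaving $V$ "can only add vertices outside $V$ and so are irrelevant" is exactly backwards --- adding such vertices is precisely what the event forbids.

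Consequently, in part (i) the conditional probability is not equal to $p^{|E|}$; the correct argument (the paper's) is that $\{E\subset E(\Gnp)\}$ is increasing while $\hat\cE(I,V)$ is decreasing in the edges outside $V^2$, so FKG gives the one-sided bound $\le p^{|E|}$. Your conclusion survives only because the true correlation happens to be negative, not because of independence. In part (ii) the error is more serious: the edges from $u$ to $W\cap V$ are \emph{not} independent of $\hat\cE(I,V)$, and the unconditional value $p^r$ is an overestimate, so "$p^r\ge p^r(1-p)^k$" does not prove the lemma. The factor $(1-p)^k$ is not downstream slack; it is exactly what pays for conditioning on $u$ having at most $r-1$ neighbours below the top level, which is part of $\hat\cE(I,V)$ itself. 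Your parenthetical hedge contains the germ of the correct argument --- writing $\ell_0=|W\cap(V\setminus I_t(I,H))|\le r-1$, one bounds
\[
\P\bigl(u\sim W,\ u\text{ has}\le r-1\text{ nbrs below top level}\bigr)
\ \ge\ p^r(1-p)^{k-i-\ell_0}\ \ge\ p^r(1-p)^k,
\]
and divides by $\P(\text{at most }r-1\text{ nbrs})\le 1$, conditionally on $\G_V$ --- but this must be the main argument, not an afterthought, and one must note that it is valid because, given $\G_V$, the edges from $u$ into $V$ are conditioned only through this single decreasing constraint.
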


\begin{proof}
  Let $\G_V$ denote the subgraph of $\Gnp$ induced by $V$.  The
  event $\hat {\cal E}(I,V)$ occurs if and only if for some $t$ and
  triangle-free subgraph $H\subset \G_V$, we have that
  $V_t(I,H)=V_t(I,\G_V)=V$ and all vertices in $V^c$ are connected
  to at most $r-1$ vertices below the top level of $H$ (i.e.,
  $V-I_t(I,H)$).  This event is increasing in the set of edges of
  $\G_V$, and decreasing in edges outside $V$.  By the FKG inequality, 
  \[
    \P(E\subset E(\Gnp) | \hat {\cal E}(I,V)) \leq \P(E\subset E(\Gnp))
    = p^{|E|}.
  \]

  For claim (ii), let $G$ be a possible value for $\G_V$ on
  $\hat\cE(I,V)$, with a subgraph $H$ as above and $i\le k-r$ vertices
  infected in the top level (i.e., $I_t(I,H)=i$).  The conditional
  probability that $u$ is connected to all vertices in $W$, given
  $\hat \cE(I,V)$ and $\G_V=G$, is equal to
\[
\frac{p^r\sum_{\ell=0}^{r-1-\ell_0}{k-i-\ell_0\choose\ell}p^\ell(1-p)^{k-i-\ell_0-\ell}}
{\sum_{\ell=0}^{r-1}{k-i\choose\ell}p^{\ell}(1-p)^{k-i-\ell}}
\]
where $\ell_0<r$ is the number of vertices in $W$ 
below the top level of $H$. 
Bounding the numerator by the $\ell=0$ term 
and the denominator by 1, the
above expression is at least 
$p^r(1-p)^{k-i-\ell_0}\ge p^r(1-p)^{k}$. 
Hence, summing over the possibilities 
for $G$ we obtain the second claim. 
\end{proof}

The following result, 
a special case of Tur\'an's Theorem \cite{T41}, 
plays an important role in establishing
the approximate independence of $\hat r$-percolations. 

\begin{lem}[{Mantel's Theorem~\cite{M07}}]\label{L_Mantel}
If a graph $G$ is triangle-free, then we have that 
$e(G)\le \lfloor v(G)^2/4 \rfloor$. 
\end{lem}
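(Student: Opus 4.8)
The plan is to prove Mantel's theorem by induction on $n=v(G)$, using the elementary observation that in a triangle-free graph two adjacent vertices cannot both have large degree.

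First I would dispose of the trivial situations: if $G$ has no edges then $e(G)=0\le\lfloor n^2/4\rfloor$, and the base cases $n\le 2$ are immediate. So I may assume $G$ is triangle-free on $n\ge 3$ vertices and contains at least one edge $uv$. Writing $d(x)=|N(x)|$, the key step is that $N(u)$ and $N(v)$ are disjoint: a common neighbour of $u$ and $v$ would form a triangle with $uv$. Since $N(u)\cup N(v)\subseteq V(G)$, this yields
\[
d(u)+d(v)=|N(u)\cup N(v)|\le n.
\]

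Now delete the vertices $u$ and $v$. The resulting graph $G'$ is triangle-free on $n-2$ vertices, and the number of edges removed is exactly $d(u)+d(v)-1$ (the edge $uv$ being counted in both $d(u)$ and $d(v)$). Applying the inductive hypothesis to $G'$ gives
\[
e(G)=e(G')+d(u)+d(v)-1\le\left\lfloor\frac{(n-2)^2}{4}\right\rfloor+n-1.
\]
A short computation, splitting into the cases $n$ even and $n$ odd, shows that $\lfloor(n-2)^2/4\rfloor+n-1=\lfloor n^2/4\rfloor$, closing the induction.

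I do not expect any genuine obstacle: the only point needing a moment's care is the arithmetic with the floor function in the inductive step, handled by the parity case split, and the implicit observation that whenever $G$ has an edge we can always choose $uv$ to run the argument. (If a non-inductive write-up is preferred, one can instead sum $d(u)+d(v)\le n$ over all edges $uv$ to obtain $\sum_v d(v)^2\le n\,e(G)$, and combine with the Cauchy--Schwarz bound $\sum_v d(v)^2\ge(2e(G))^2/n$ to get $e(G)\le n^2/4$; integrality of $e(G)$ then gives the stated floor bound. I would use whichever version is shorter.)
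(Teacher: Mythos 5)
Your proof is correct and complete: the disjointness of $N(u)$ and $N(v)$ for an edge $uv$ in a triangle-free graph gives $d(u)+d(v)\le n$, the edge count under deletion of $u,v$ is exactly $d(u)+d(v)-1$, and the floor arithmetic $\lfloor (n-2)^2/4\rfloor + n-1 = \lfloor n^2/4\rfloor$ checks out in both parities; the Cauchy--Schwarz alternative you sketch is also sound. There is nothing in the paper to compare against: the authors state Mantel's theorem as a classical result with a citation to Mantel (1907) and give no proof, so any standard argument such as yours is exactly what is called for.
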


In other words, a triangle-free graph has edge-density at most $1/2$. 
The number $2r-1$ is key, since   
$\lfloor (2r-1)^2/4 \rfloor=r(r-1)$, and thus 
\begin{equation}\label{E_nu}
r(2r-1)-
\lfloor (2r-1)^2/4 \rfloor 
=r^2. 
\end{equation}

\begin{lemma}\label{L_corr}
Let $\alpha>0$ and $k\le n^{1/(r(r+1))}$.  
Put $p=\vartheta_r(\alpha,n)$. 
  Fix sets $I\subset V$ and $I'$ such that 
  $|I|=|I'|=r$, $|V|=k$
  and $\ell=|V\cap I'|<r$.  
  Let $\hat\cE_{k,q}(I')$ denote the event that for some 
  $t$ we have that $\hat V_{t}(I')=V'$ for some $V'$
  such that $|V'|=k$ and $|V\cap V'|=q$.
  Then
  \[
    \P\big(\hat\cE_{k,q}(I')|\hat\cE(I,V) \big) \le  
    \begin{cases}
      (1+o(1)) \hat P_r(k) & q=0, \\
      o((n/k)^\ell) \hat P_r(k) & 1\leq q < 2r-1, \\
      k^{2r-1}(kp)^{r(r-\ell)} & q\geq 2r-1,
    \end{cases}
  \]
  where $o(1)$ depends only on $n$. 
\end{lemma}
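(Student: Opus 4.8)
The plan is to condition on a realization $G$ of $\G_V$ compatible with $\hat\cE(I,V)$ and then bound, via a union over the possible target sets $V'$, the conditional probability that $\hat r$-percolation from $I'$ reaches a set $V'$ of size $k$ with $|V\cap V'|=q$. Fix such a $V'$. On $\hat\cE_{k,q}(I')\cap\{V'\text{ is reached}\}$ there is a minimally susceptible triangle-free subgraph $H'\subset\Gnp[V']$ (perhaps not induced) for which $I'$ is contagious, and moreover every vertex outside $V'$ is $r$-connected to at most $r-1$ vertices below the top level of $H'$. The edge set of $H'$ splits into edges lying inside $V\cap V'$ — of which, by Mantel's theorem (Lemma \ref{L_Mantel}), there are at most $\lfloor q^2/4\rfloor$ — and $r(k-r)-(\text{those})$ edges having at least one endpoint in $V'\setminus V$. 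Edges of the first kind are already present in $G$ and contribute no probability cost; for edges of the second kind we use Lemma \ref{L_hat:EV}(i) to pay a factor $p$ each, since they lie in $[n]^2\setminus V^2$. This is the step where the triangle-free restriction and the constant $2r-1$ from \eqref{E_nu} enter decisively.

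Next I would count target sets and edge-labelings. The number of choices for $V'$ with $|V\cap V'|=q$ and $I'\subset V'$ is at most $\binom{k}{q-\ell}\binom{n}{k-q}$ (choosing $q-\ell$ further vertices of $V'$ inside $V$, and the rest outside), and the number of choices for the edge set of $H'$ is at most $\hat m_r(k)\le m_r(k)$ by Lemma \ref{L_mUB}. Putting these together with the per-edge cost above, and the ``outside'' factor $\bigl(1-\binom{k-i}{r}p^r(1-p)^k\bigr)^{n-k}$ bounded using Lemma \ref{L_hat:EV}(ii) exactly as in the proof of Lemma \ref{L_hat:Pki}, the conditional probability of $\hat\cE_{k,q}(I')$ given $\hat\cE(I,V)$ is at most
\[
\sum_{q}\binom{k}{q-\ell}\binom{n}{k-q}\, m_r(k)\, p^{\,r(k-r)-\lfloor q^2/4\rfloor}\,
e^{-(1+o(1))\eps\binom{k-i}{r}}.
\]
Now I would compare this termwise with $\hat P_r(k)=\binom{n-r}{k-r}\P(\hat\cE(I,V))$, using the matching lower bound $\hat P_r(k)\gtrsim \binom{n}{k-r} m_r(k)^{-1+o(1)}\cdots$ — more cleanly, using Lemmas \ref{L_hat:Pki} and \ref{L_hat:mLB} which show $\hat P_r(k)\ge e^{-o(k)}\binom{n}{k-r}p^{r(k-r)}\cdots$ up to the same exponential factor. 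The ratio of the $q$-term to $\hat P_r(k)$ is then, up to $e^{o(k)}$, of order $\binom{k}{q-\ell}(n/k)^{-(q-\ell)} p^{-\lfloor q^2/4\rfloor}$, i.e.\ roughly $(k^2/n)^{q-\ell}\,p^{-\lfloor q^2/4\rfloor}$ times benign polynomial-in-$k$ factors.

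The three cases now fall out by arithmetic on this ratio. For $q=0$ (hence $\ell=0$) there is no loss and one recovers $(1+o(1))\hat P_r(k)$. For $1\le q<2r-1$, one checks that $q-\ell\ge 1$ while $\lfloor q^2/4\rfloor<r(r-\ell)$ is small enough that the gain $(k/n)^{q-\ell}$ beats the loss $p^{-\lfloor q^2/4\rfloor}$ with room to spare, yielding $o((n/k)^\ell)\hat P_r(k)$; here one uses $k\le n^{1/(r(r+1))}$ and $p=\vartheta_r(\alpha,n)$ so that $kp\to 0$ and powers of $k$ and $\log n$ are absorbed. For $q\ge 2r-1$, $\hat P_r(k)$ is negligible and one instead keeps the crude bound: there are at most $k^{2r-1}$ overlap patterns of size $\ge 2r-1$ that matter (only $O(r)$ of the $k-r$ vertices of $V'$ outside the overlap need be "new" to force the structure), and each forces at least $r(r-\ell)$ edges incident to $I'$, each costing $kp$ after summing over their other endpoints — giving $k^{2r-1}(kp)^{r(r-\ell)}$. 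The main obstacle is the bookkeeping in the middle case: one must verify that the Mantel saving $\lfloor q^2/4\rfloor$ on the exponent of $p$ never overwhelms the volume gain $(k/n)^{q-\ell}$ for $q<2r-1$, which is precisely where the identity \eqref{E_nu}, $r(2r-1)-\lfloor(2r-1)^2/4\rfloor=r^2$, marks the crossover between the "$\hat P_r(k)$-dominated" and "edge-cost-dominated" regimes; getting the $o(\cdot)$ uniform in $i,k$ requires the uniform forms of Lemmas \ref{L_hat:Pki} and \ref{L_mUB}.
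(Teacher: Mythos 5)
You follow the paper's route: a union bound over the target sets $V'$, Mantel's theorem to cap the number of edges of $H'$ lying inside the overlap $V\cap V'$ (which cost nothing conditionally), Lemma~\ref{L_hat:EV} for the conditional edge probabilities and the ``no further infection'' factor, a termwise comparison with $\hat P_r(k)$ when $q<2r-1$, and a separate crude count built on \eqref{E_nu} when $q\ge 2r-1$. Two steps need repair. For $q=0$ the target is $(1+o(1))\hat P_r(k)$, so the combinatorial factor must cancel exactly; your plan to bound the number of admissible edge sets by $m_r(k)$ (Lemma~\ref{L_mUB}) and compare against the lower bound for $\hat P_r(k)$ from Lemma~\ref{L_hat:mLB} loses a multiplicative $e^{o(k)}=n^{o(1)}$ at each end, which is not $1+o(1)$ when $k=\Theta(\log n)$. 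The paper keeps the count as $\hat m_r(k,i)$ for each top-level size $i$, forms a quantity $\hat Q_r(k,i)$ that exactly parallels the lower bound $\hat\ell_r(k,i)$ of Lemma~\ref{L_hat:Pki}, and lets the $\hat m_r(k,i)$ factors cancel; your displayed sum, which mixes $m_r(k)$ with an unquantified index $i$, should be reorganized this way. (This slack is harmless for $1\le q<2r-1$, where the polynomial gain $(k^2/(np^{q/4}))^q\le n^{-c}$ absorbs it; note also that your claim $q-\ell\ge1$ can fail, e.g.\ when the only overlap is $V\cap I'$, but the estimate survives without it.)

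The second issue is the case $q\ge2r-1$: the forced $r(r-\ell)$ edges are not ``incident to $I'$''. The paper's argument looks at the first $q_*=2r-1-\ell$ times the percolation from $I'$ infects a vertex of $V-I'$, call them $v_1,\dots,v_{q_*}$; each $v_j$ is $r$-connected to the previously infected set, Mantel applied to the triangle-free graph on the $2r-1$ vertices $\{v_j\}\cup(V\cap I')$ caps the internal edges at $\lfloor(2r-1)^2/4\rfloor$, and the remaining $rq_*-\lfloor(2r-1)^2/4\rfloor=r(r-\ell)$ edges therefore run from $\{v_j\}\subset V$ to the part of the $I'$-cluster outside $V$ --- precisely edges in $[n]^2\setminus V^2$ to which Lemma~\ref{L_hat:EV}(i) applies, giving $k^{q_*}(kp)^{r(r-\ell)}$. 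Your sketch (``only $O(r)$ of the vertices need be new'') gestures at stopping after $2r-1$ overlap vertices, but it does not identify where the $r(r-\ell)$ external edges come from, and as stated (edges at $I'$) would not produce that exponent.
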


\begin{proof}
{\bf Case i} ($q<2r-1$).
We claim that  
\begin{equation}\label{E_Ek-smallq}
\P\big(\hat\cE_{k,q}(I')|\hat\cE(I,V) \big)\le 
\left(\left(\frac{n}{k}\right)^{\ell}\left(\frac{k^2}{np^{q/4}}\right)^{q}\right)
\sum_{i=1}^{k-r}
\hat Q_r(k,i)
\end{equation}
where $\hat Q_r(k,i)$ is equal to 
\[
{n\choose k-r}
\hat m_r(k,i)
p^{r(k-r)}
\left(
1
-
\left({k-i\choose r}-{q\choose r}\right)p^r(1-p)^{2k}
\right)^{n-2k}.
\]
To see this, note that 
 if $\hat\cE_{k,q}(I)$ occurs then 
for some $V'$ such that $|V'|=k$, $I'\subset V'$,
and $|V\cap V'|=q$, we have that 
$I'$ is a contagious set for a triangle-free subgraph 
$H'\subset\Gnp$ on $V'$ with 
$i$ vertices in the top level, for some $i\le k-r$, 
and all vertices in $(V\cup V')^c$
are connected to at most $r-1$ vertices 
below the top level of $H'$. 
There are at most 
\[
{k\choose q-\ell}
{n-(q-\ell)\choose k-r-(q-\ell)}
\le
\left(\frac{n}{k}\right)^{\ell}
\left(\frac{k^2}{n}\right)^q
{n\choose k-r}
\]
such subsets $V'$. 
By Lemma~\ref{L_hat:EV},\ref{L_Mantel}, 
for any such $V'$ and $i$ as above, 
the conditional probability that such a subgraph $H'$ exists,
given $\hat \cE(I,V)$, 
is bounded by $\hat m(k,i)p^{r(k-r)-q^2/4}$,
since at most $q^{2}/4$ edges of $H'$ join
vertices in $V\cap V'$. 
By Lemma~\ref{L_hat:EV},
for any $u\in (V\cup V')^c$ and  
set $V''$ 
of $r$ vertices below the top level of $H'$ 
with at most $r-1$ vertices in $V\cap V'$, 
the conditional probability that $u$ is connected to 
all vertices in $V''$ 
is at least $p^r(1-p)^k$. 
Hence any such $u$ is 
connected to all vertices in such a $V''$
with conditional probability at least 
$\left({k-i\choose r}-{q\choose r}\right)p^r(1-p)^{2k}$.
The claim follows. 

To conclude, let $\hat \ell_r(k,i)$ 
be as in the proof of Lemma~\ref{L_hat:Pki}, 
which recall shows that 
$\hat P_r(k,i)\ge(1-o(1))\hat\ell_r(k,i)$ as $k\to\infty$, 
where $o(1)$ depends only on $n$. 
We have, by the inequalities 
${n\choose k}\le n^k/k!$ and $1-x<e^{-x}$, 
that 
\[
\log \frac{\hat Q_r(k,i)}{\hat \ell_r(k,i)}
<
\eps{k-i\choose r}
\left(1-(1-p)^{2k}\left(1-\frac{2k}{n}\right)\right)+\eps q^r.
\]
By the inequality $(1-x)^y\ge1-xy$, 
and since $k\le n^{1/(r(r+1))}$, 
it follows that 
the right hand side is at most
$\eps n^{1/r}(p+1/n) +\eps q^r\sim0$, and so 
\[
\hat Q_r(k,i)
\le (1+o(1))\hat \ell_r(k,i)
\le (1+o(1))\hat P_r(k,i)
\]
where $o(1)$ depends only on $n$.
Hence
\[
\sum_{i=1}^{k-r}
\hat Q_r(k,i)
\le (1+o(1))\sum_{i=1}^{k-r}\hat P_r(k,i)
= (1+o(1))\hat P_r(k).
\]
Finally, case (i) follows by \eqref{E_Ek-smallq}
and noting that 
\[
\frac{np^{q/4}}{k^2}
>\frac{np^{r/2}}{k^2}
\ge n^{1/2-2/(r(r+1))}\left(\frac{\alpha }{\log^{r-1}n}\right)^{1/2}
\gg 1
\]
since  
$q<2r$, $k\le n^{1/(r(r+1))}$ and $r\ge2$.

{\bf Case ii} ($q\ge2r-1$). 
Put $q_*=2r-1-\ell$. 
If $\hat {\cal E}_{k,q}(I')$ occurs,  
then 
for some $\{v_j\}_{j=1}^{q_*}\subset V-I'$ and  
non-decreasing sequence $\{t_j\}_{j=1}^{q_*}$, 
we have that 
$v_j\in \hat I_{t_j}(I')$  
and 
$\hat V_j=\hat V_{t_j-1}(I')$  
satisfy $|\hat V_{q_*}|<k$
and  $\hat V_j\cap (V-I')\subset  \bigcup_{i<j}\{v_i\}$.
Informally, $t_j$ is the $j$th time 
that $\hat r$-percolation initialized by 
$I'$ infects a vertex in $V-I'$.
It follows that 
$\Gnp$ contains a triangle-free subgraph on 
$\{v_j\}_{j=1}^{q_*} \cup \hat V_{q_*}$. 
Since $v_j\in \hat I_{t_j}(I')$, note that $v_j$ is  
$r$-connected to $\hat V_j$. 
Hence, by Lemma~\ref{L_Mantel}
and \eqref{E_nu},  
there are at least 
\[
rq_*
-\lfloor (2r-1)^2/4 \rfloor 
=
r(r-\ell)
\] 
edges between $\{v_j\}_{j=1}^{q_*}$
and $\hat V_{q_*}-V$. Thus, by Lemma~\ref{L_hat:EV}, 
 the conditional probability of 
$\hat {\cal E}_{k,q}(I')$,
given $\hat {\cal E}(I,V)$, is bounded by 
$k^{q_*}(kp)^{r(r-\ell)} \le k^{2r-1}(kp)^{r(r-\ell)}$,
as claimed.  
\end{proof}

Using Lemma~\ref{L_corr} 
we establish the main result of this section.  

\begin{proof}[Proof of Lemma~\ref{L_hat:cr2mm}]
  Fix a sequence of sets $\{V_\ell\}_{\ell=m}^r$ such that
  $I\subset V_\ell$ and $\ell=|V_\ell\cap I'|$.  By symmetry, we have
  that
  \begin{align*}
    \P(\hat\cE_k(I')|\hat\cE_k(I))
    &=
    {n-r\choose k-r}^{-1}
    \sum_{\ell=m}^r{n-r-(\ell-m)\choose k-r-(\ell-m)}\P(\hat\cE_k(I')|\hat\cE(I,V_\ell))\\
    &\le \sum_{\ell=m}^r(k/n)^{\ell-m}\P(\hat\cE_k(I')|\hat\cE(I,V_\ell)).
  \end{align*}
  If $\ell=m$, then by Lemma~\ref{L_corr}, summing over $q\in[\ell,k]$, we get
\[
\P(\hat\cE_k(I')|\hat\cE(I,V_m)) 
\le  
\begin{cases}
(1+o(1))\hat P_r(k) + k^{2r}(kp)^{r^2}  & m=0, \\
o((n/k)^{m})\hat P_r(k) + k^{2r}(kp)^{r(r-m)} & 1\leq m < r. \\
\end{cases}
\] 

Likewise,
for any $m< \ell<r$, 
\begin{align*}
(k/n)^{\ell-m}
\P(\hat\cE_k(I')|\hat\cE(I,V_\ell))
&\le 
(k/n)^{\ell-m}
\left(
o((n/k)^{\ell})\hat P_r(k)+k^{2r}(kp)^{r(r-\ell)}
\right)\\
&= 
o((n/k)^{m})\hat P_r(k)
+k^{2r}(kp)^{r(r-m)}(np^rk^{r-1})^{m-\ell}\\
&\le 
o((n/k)^{m})\hat P_r(k)
+k^{2r}(kp)^{r(r-m)}(\alpha\beta^{r-1})^{m-\ell}\\
&=
o((n/k)^{m})\hat P_r(k)
+O(k^{2r}(kp)^{r(r-m)}).
\end{align*}
Finally, for $\ell=r$ we bound $\P(\hat\cE_k(I')|\hat\cE(I,V_r))\leq 1$.
Summing over $\ell\in[m, r]$ we obtain the result.
\end{proof}

\subsection{Terminal $r$-percolations}\label{S_term}

In this section, we establish the importance of 
$\beta_*(\alpha)$
to the growth of super-critical $r$-percolations. 
Essentially, we find that an $r$-percolation on $\Gnp$, 
having grown larger than $\beta_*(\alpha)\log{n}$,  
with high probability continues to grow. 

\begin{defn}
We say that $I\subset[n]$ is a \emph{terminal $(k,i)$-contagious
set} for $\Gnp$ if 
$|V_{\tau}(I,\Gnp,r)|=k$ and 
$|L_{\tau}(I,\Gnp,r)|=i$.
\end{defn}
 
\begin{lem} \label{L_term}
Let $\alpha>\alpha_r$ and 
$\beta_r^*(\alpha)<\beta_1<\beta_2$. Put 
$p=\vartheta(\alpha,n)$.
With high probability,
$\Gnp$ has no terminal $m$-contagious set, with 
$m=\beta\log{n}$, for all $\beta\in[\beta_1,\beta_2]$. 
\end{lem}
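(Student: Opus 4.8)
The plan is to bound the expected number of terminal $m$-contagious sets via a first moment argument, using the estimates from Section~\ref{S_Pki} together with the characterization of $\beta_*(\alpha)$ from Lemma~\ref{L_beta*}.

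\begin{proof}[Proof sketch of Lemma~\ref{L_term}]
Let $\alpha>\alpha_r$ and fix $\beta_r^*(\alpha)<\beta_1<\beta_2$. The key point is that a terminal $(k,i)$-contagious set is in particular a $(k,i)$-contagious set for which, additionally, \emph{no} vertex outside the infected set of size $k$ has $r$ neighbours in it---that is, the percolation halts at size $k$. First I would observe that the expected number of terminal $(k,i)$-contagious sets is at most $E_r(k,i)$, the expected number of $(k,i)$-contagious sets, since the terminal condition is a further restriction. Hence, summing over the $O(\log n)$ values of $k = \beta\log n$ with $\beta\in[\beta_1,\beta_2]$ and the $O(\log n)$ values of $i\le k-r$, it suffices to show that $E_r(k,i) \ll (\log n)^{-2}$ uniformly over this range.

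Next I would invoke Lemma~\ref{L_Eki}, which gives $E_r(k,i) \lesssim n^{\mu_r(\alpha,\beta,\gamma)}\log^{r(r-1)}n$ where $\gamma = i/k$ and $\mu_r$ is as in \eqref{E_mu}. So the statement reduces to checking that $\mu_r(\alpha,\beta,\gamma) < 0$ for all $\beta\in[\beta_1,\beta_2]$ and all $\gamma\in[0,1)$. Since $\beta_1 > \beta_r^*(\alpha)$, Lemma~\ref{L_beta*}(i) gives $\mu_r^*(\alpha,\beta) = \mu_r(\alpha,\beta,0) < 0$ for all $\beta\ge\beta_1$ (using monotonicity in $\beta$ and the fact that $\mu_r^*$ vanishes exactly at $\beta_*(\alpha)$). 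It then remains to control the dependence on $\gamma$: one checks from \eqref{E_mu} that the $\gamma$-dependent terms are $-\frac{\alpha\beta^r}{r!}((1-\gamma)^r - 1) - \beta\gamma$, and a short computation (the derivative in $\gamma$, or convexity of $(1-\gamma)^r$) shows $\mu_r(\alpha,\beta,\gamma) \le \mu_r^*(\alpha,\beta)$ whenever $\alpha\beta^{r-1}/(r-1)! \le 1$, i.e.\ $\beta \le \beta_r(\alpha)$ --- this is exactly Claim~\ref{C_smallbeta}. For $\beta > \beta_r(\alpha)$ one needs a separate argument: here the term $-\alpha\beta^r(1-\gamma)^r/r!$ can decrease in $\gamma$, but the point is that $\mu_r$ is a continuous function of $(\beta,\gamma)$ on the compact set $[\beta_1,\beta_2]\times[0,1-\eta]$ for suitable $\eta$, and one can verify it is strictly negative there, while for $\gamma$ near $1$ the term $\beta\log(\alpha\beta^{r-1}/(r-1)!)$ combined with $-\beta\gamma$ dominates downward. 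A clean way is to note that since $(1-\gamma)^r$ is convex, $\mu_r(\alpha,\beta,\cdot)$ is concave in $\gamma$, so its maximum over $\gamma\in[0,1]$ is attained either at $\gamma=0$ (giving $\mu_r^*(\alpha,\beta)<0$) or at an interior critical point; at an interior critical point one has $\alpha\beta^{r-1}(1-\gamma)^{r-1}/(r-1)! = 1$, and substituting back yields a value of the same form as $\mu_r^*$ evaluated at the effective parameter $\beta(1-\gamma) < \beta_r(\alpha)$, which one shows is again negative using that $\alpha > \alpha_r$.

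The main obstacle I anticipate is this last step: verifying that $\sup_{\gamma}\mu_r(\alpha,\beta,\gamma) < 0$ for $\beta$ in the full range $[\beta_1,\beta_2]$ rather than just $\beta\le\beta_r(\alpha)$, since Claim~\ref{C_smallbeta} only covers the smaller range. Everything else --- the reduction to $E_r(k,i)$, the union bound over $O(\log^2 n)$ pairs $(k,i)$, and absorbing the $\log^{r(r-1)+2}n$ factor into the negative power of $n$ --- is routine. Once $\mu := \sup_{\beta\in[\beta_1,\beta_2],\,\gamma\in[0,1)}\mu_r(\alpha,\beta,\gamma) < 0$ is established, the expected number of terminal $m$-contagious sets with $m\in[\beta_1\log n,\beta_2\log n]$ is at most a constant times $n^{\mu}\log^{r(r-1)+2}n \to 0$, and Markov's inequality completes the proof.
\end{proof}
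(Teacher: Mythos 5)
There is a genuine gap, and it occurs at the very first step. You bound the expected number of terminal $(k,i)$-contagious sets by $E_r(k,i)$ on the grounds that terminality is "a further restriction," and then try to show $\mu_r(\alpha,\beta,\gamma)<0$ uniformly over $\beta\in[\beta_1,\beta_2]$ and $\gamma\in[0,1)$. But that uniform negativity is false, and in fact its failure is the engine of the whole super-critical argument in Section~\ref{S_hat:sup-rperc}: for $\alpha>\alpha_r$ and $\beta>\beta_*(\alpha)$ the expected number of (non-terminal) $(k,i)$-contagious sets with $i=\gamma k$ is \emph{large} for suitable $\gamma>0$ (cf.\ Lemma~\ref{L_mueps}), which is precisely how the second moment method shows percolation continues past $\beta_*\log n$. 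Concretely, since $\alpha>\alpha_r$ forces $\beta_r(\alpha)<\beta_*(\alpha)<\beta_1$, every $\beta$ in your range exceeds $\beta_r(\alpha)$, so $\mu_r(\alpha,\beta,\cdot)$ has an interior maximum at $\gamma^*=1-\beta_r(\alpha)/\beta$, where one computes
\[
\mu_r(\alpha,\beta,\gamma^*)=r+(r-1)\beta\log\!\big(\beta/\beta_r(\alpha)\big)-\beta(r-1)+\beta_r(\alpha)\Big(1-\tfrac1r\Big),
\]
which is strictly positive already for $\beta$ near $\beta_*(\alpha)$ (e.g.\ $r=2$, $\alpha=1/4$, $\beta=5$ gives $\approx 0.12$) and tends to $+\infty$ as $\beta\to\infty$. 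So the supremum you need to be negative is positive, and no amount of care with the union bound rescues the argument.

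The fix is to actually use the terminal condition quantitatively rather than discard it. Terminality means no vertex outside the infected set of size $k$ has $r$ neighbours in the \emph{whole} set --- including the top level --- so the per-outside-vertex exclusion probability involves $\binom{k}{r}p^r$ rather than $\binom{k-i}{r}p^r$. This is what the paper does: it bounds the probability that a given $I$ is terminal $(k,i)$-contagious by
\[
\binom{n}{k-r}\,m_r(k,i)\,p^{r(k-r)}\left(1-\binom{k}{r}p^r(1-p)^r\right)^{n-k},
\]
and combined with Lemma~\ref{L_mUB} this yields an exponent $\mu_r^*(\alpha,\beta)-\beta\gamma\le\mu_r^*(\alpha,\beta)\le\mu_r^*(\alpha,\beta_1)<0$, uniformly in $\gamma$, by Lemma~\ref{L_beta*}. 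The rest of your outline (union bound over $O(\log^2 n)$ pairs $(k,i)$, absorbing polylogarithmic factors, Markov) then goes through exactly as you describe.
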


\begin{proof}
If $r$-percolation initialized by 
$I\subset[n]$ terminates at size $k$
with $i$ vertices in the top level, then
$I$ is a contagious set for some subgraph $H\subset \Gnp$
of size $k$ with $i$ vertices in the top level, and  
all vertices in $V(H)^c$ are connected to at most $r-1$ vertices 
in $V(H)$.  
Hence the probability that a given $I$ is as such is 
bounded by
\[
{n\choose k-r}m_r(k,i)p^{r(k-r)}
\left(1-{k\choose r}p^r(1-p)^r\right)^{n-k}.
\]
For $k\le \beta_2\log{n}$ and relevant $i$, we have that 
\[
1-{k\choose r}p^r(1-p)^r
= 1-{k\choose r}p^r+O(n^{-1})
\]
where $O(n^{-1})$ depends on $\alpha,\beta_2$, 
but not on $k/\log{n}$ and $i/k$. 
Put $\eps=np^r$. By 
Lemma~\ref{L_mUB} (and the inequalities 
${n\choose k}\le n^k/k!$ and $1-x<e^{-x}$), 
it follows that the expected number of terminal 
$(k,i)$-contagious sets, with $k=\beta\log{n}$ and $i=\gamma k$, 
for some $\beta\le \beta_2$, is bounded (up to a constant) by  
\[
{n\choose r}\left(\frac{\eps k^{r-1}}{(r-1)!}\right)^k
\eps^{-r}
e^{-i-(r-2)k-\eps{k\choose r}}
\lesssim 
n^{\mu_r^*(\alpha,\beta)-\beta\gamma}\log^{r(r-1)}n
\]
where $\lesssim$ denotes inequality 
up to a constant depending on 
$\alpha,\beta_2$, but not on $\beta,\gamma$.

By Lemma~\ref{L_beta*}, we have that 
$\mu_r^*(\alpha,\beta)\le\mu_r^*(\alpha,\beta_1)<0$ 
for all $\beta\in[\beta_1,\beta_2]$. Hence, 
summing over the $O(\log^2n)$ relevant values of $i,k$, 
we find that
the probability that $\Gnp$ contains a terminal 
$m$-contagious set for some $m=\beta\log{n}$, with
$\beta\in[\beta_1,\beta_2]$, is bounded (up to a constant) by 
\[
n^{\mu_r^*(\alpha,\beta_1)}\log^{r(r-1)+2}n
\ll1
\]
as required. 
\end{proof}

\subsection{Almost sure susceptibility }\label{S_rperc}
 
Finally, we complete the proof of Theorem~\ref{T_main}.  Using
Lemmas~\ref{L_mueps},\ref{L_hat:cr2mm},\ref{L_term}, we argue that if $\alpha>\alpha_r$,
then with high probability $\Gnp$ contains a large susceptible subgraph.
By adding independent random graphs with small edge
probabilities, we deduce that percolation occurs with high probability.

\begin{proof}[Proof of Theorem~\ref{T_main}]
  Proposition~\ref{P_SC} gives the sub-critical case 
  $\alpha<\alpha_r$.  Assume
  therefore that $\alpha>\alpha_r$.  Let $\G_*,\G_i$, for $i\ge0$, 
  be independent random graphs with edge
  probabilities $p=\vartheta_r(\alpha_r+\eps,n)$ and
  $p_i=2^{-i(r-1)/r}p_\eps$, where $p_\eps=\vartheta_r(\eps,n)$.  
  Moreover, let $\eps>0$ be sufficiently small so that 
  $\G=\G_*\cup\bigcup_{i\ge0} \G_i$ is a random graph with edge probabilities at
  most $\vartheta_r(\alpha,n)$.
  It thus suffices to show that $\G$ is susceptible. 

\begin{claim}\label{Cl_U0}
  Let $A>0$. 
  With high probability, the graph $\G_*$ contains a
  susceptible subgraph on some set $U_0\subset[n]$ of size
  $|U_0| \geq A \log{n}$.
\end{claim}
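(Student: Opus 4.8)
The plan is to use the second moment method on the number of sets $I$ of size $r$ for which $\hat r$-percolation on $\G_*$ grows to a suitable size $k_n$, and then bootstrap this to the existence of an arbitrarily large susceptible subgraph. Fix $\eps<1/(r+1)$ (shrinking it further if needed), so that $p=\vartheta_r(\alpha_r+\eps,n)=\vartheta_r(\alpha_{r,\eps},n)$ and Lemma~\ref{L_mueps} applies. Let $k_n$ be an integer with $k_n/\log n$ in the interval $[\beta_*(\alpha_{r,\eps}),\beta_*(\alpha_{r,\eps})+\delta]$ guaranteed by Lemma~\ref{L_mueps}, so that $\hat E_r(k_n)\gg n^\zeta$ for some $\zeta=\zeta(r,\eps)>0$. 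Let $N=\sum_{|I|=r}\mathbf 1\{\hat{\cal E}_{k_n}(I)\}$, so $\EE N=\hat E_r(k_n)\gg n^\zeta$. For the second moment, split $\EE N^2=\sum_{I,I'}\P(\hat{\cal E}_{k_n}(I)\cap\hat{\cal E}_{k_n}(I'))$ according to $m=|I\cap I'|$ and apply Lemma~\ref{L_hat:cr2mm}: the diagonal $m=r$ contributes $\EE N$; for $m=0$ the bound $(k_n/n)^r+O(k_n^{2r}(k_np)^{r^2})+(1+o(1))\hat P_r(k_n)$, multiplied by the $\sim\binom{n}{r}^2$ pairs, gives $(1+o(1))(\EE N)^2$ plus lower-order terms (here one checks, using $p=\vartheta_r$ and $k_n=\Theta(\log n)$, that $n^{2r}k_n^{2r}(k_np)^{r^2}=o((\EE N)^2)$, which holds because $(k_np)^{r^2}$ is a negative power of $n$ times polylog while $\EE N^2\gg n^{2\zeta}$ — one must verify $\zeta$ beats the loss, and if not, note $n^{2r}(k_n/n)^r$ and the correlation term are both $o(n^{2\zeta})$ after taking $\eps$ small); for $1\le m<r$ the number of pairs is $\Theta(n^{2r-m})$ and the bound $o((n/k_n)^m)\hat P_r(k_n)+O(k_n^{2r}(k_np)^{r(r-m)})$ multiplies out to $o((\EE N)^2)+o(n^{2\zeta})$. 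Thus $\EE N^2=(1+o(1))(\EE N)^2$, and by Paley--Zygmund (Chebyshev) $\P(N>0)\to1$; in fact $N\ge1$ whp gives a set $U_1\subset[n]$ with $|U_1|=k_n$ and a triangle-free minimally susceptible subgraph of $\G_*$ on $U_1$ with initial set some $I$.

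Next I would upgrade from size $k_n=\Theta(\log n)$ to size $A\log n$. The idea is that once an $r$-percolation (no longer restricting to triangle-free) has grown past the critical size $\beta_*(\alpha_r+\eps)\log n$, Lemma~\ref{L_term} forbids it from \emph{terminating} at any size $\beta\log n$ with $\beta\in[\beta_1,\beta_2]$ for any fixed $\beta_*(\alpha_r+\eps)<\beta_1<\beta_2$; applying this with $\beta_1=\beta_*(\alpha_{r,\eps})$ (or just below $k_n/\log n$) and $\beta_2=A$, we conclude that whp the $r$-percolation on $\G_*$ started from the set $I$ found above, which has already reached size $k_n\ge\beta_*(\alpha_{r,\eps})\log n$, cannot terminate before reaching size $A\log n$. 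Hence whp $\G_*$ contains a susceptible subgraph on a set $U_0\supset U_1$ with $|U_0|\ge A\log n$ (if the percolation reaches size exactly between $A\log n$ and slightly more, truncate $U_0$ to the first $\lceil A\log n\rceil$ infected vertices, which still span a susceptible subgraph by the ordering property). Note that the union of two high-probability events is high-probability, so combining the second-moment conclusion with Lemma~\ref{L_term} is legitimate.

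The main obstacle is the second-moment bookkeeping: one must confirm that every error term produced by Lemma~\ref{L_hat:cr2mm}, once multiplied by the relevant number $\Theta(n^{2r-m})$ of pairs, is genuinely $o((\EE N)^2)$ and not merely $o(n^{2r})$. The delicate cases are the polynomial terms $k_n^{2r}(k_np)^{r(r-m)}$ times $n^{2r-m}$: since $p=\Theta((n\log^{r-1}n)^{-1/r})$ we have $(k_np)^{r(r-m)}=n^{-(r-m)}\cdot(\text{polylog})$, so this product is $n^{2r-m}n^{-(r-m)}\cdot\mathrm{polylog}=n^{r}\cdot\mathrm{polylog}$, which must be compared with $(\EE N)^2\gg n^{2\zeta}$; this forces a quantitative lower bound $\zeta>r/2$, which is \emph{not} automatic. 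The resolution — which I expect is what the paper does — is that $\hat E_r(k_n)$ is in fact much larger than a fixed power of $n$: choosing $k_n$ near the lower end $\beta_*(\alpha_{r,\eps})\log n$ makes $\mu_{r,\eps}(\alpha_{r,\eps},\beta,\gamma)$ close to $\mu_r^*(\alpha_{r,\eps},\beta_*(\alpha_{r,\eps}))>0$ but one may instead take $k_n$ a bit larger so that $\hat E_r(k_n)$ is a large power of $n$, larger than $n^r$; alternatively, and more cleanly, one observes that it suffices to have $N>0$ with probability bounded away from $0$ and then amplify — but since we want "whp", the correct move is to note $\P(N=0)\le \mathrm{Var}(N)/(\EE N)^2=o(1)$ provided the dominant off-diagonal term $(1+o(1))(\EE N)^2$ cancels and the residual is $o((\EE N)^2)$, which, after choosing $\eps$ small and $k_n$ appropriately (so that $\zeta$ can be taken larger than $r/2$, using that $\mu_{r,\eps}\to r$ as $\beta\to0$ is not available here but $\mu_r^*(\alpha_{r,\eps},\cdot)$ stays positive on a full interval giving $\zeta$ as large as we like below $\mu_r^*$ at the left endpoint), is exactly the content one needs. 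I would carry out this calibration of $k_n$ and $\eps$ first, then run the two-moment computation, then invoke Lemma~\ref{L_term}.
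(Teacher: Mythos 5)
Your overall strategy is exactly the paper's: a second moment argument for the count $N$ of sets $I$ with $\hat{\cal E}_{k_n}(I)$, using Lemma~\ref{L_mueps} for the first moment and Lemma~\ref{L_hat:cr2mm} for the correlations, followed by Lemma~\ref{L_term} to push the percolation from size $k_n$ up to $A\log n$. The second half of your argument (the upgrade to size $A\log n$) is fine.

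The problem is in your variance bookkeeping, and it is a genuine error rather than a presentational one. You compare $n^{2r-m}\,k_n^{2r}(k_np)^{r(r-m)}$ directly against $(\EE N)^2\gg n^{2\zeta}$ and conclude that you need $\zeta>r/2$. But the contribution of a pair $(I,I')$ with $|I\cap I'|=m$ to $\EE N^2$ is $\P(\hat{\cal E}_{k_n}(I))\,\P(\hat{\cal E}_{k_n}(I')\mid\hat{\cal E}_{k_n}(I))$; Lemma~\ref{L_hat:cr2mm} only bounds the second factor, and you have dropped the first factor $\hat P_r(k_n)$. Restoring it, the polynomial error terms contribute $O\bigl(n^{2r-m}\,\hat P_r(k_n)\,k_n^{2r}(k_np)^{r(r-m)}\bigr)$, and since $(k_np)^rn=k_n^r\cdot np^r=O(\log n)$ this is $O\bigl(n^r\,\hat P_r(k_n)\cdot\mathrm{polylog}\,n\bigr)$; dividing by $(\EE N)^2=\Theta\bigl(n^{2r}\hat P_r(k_n)^2\bigr)=\Theta\bigl(\hat E_r(k_n)^2\bigr)$ leaves $O\bigl(\mathrm{polylog}(n)/\hat E_r(k_n)\bigr)=O(n^{-\zeta}\,\mathrm{polylog}\,n)=o(1)$ for \emph{any} $\zeta>0$. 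This is exactly how the paper organizes the computation: it sums $\P(\hat{\cal E}_{k_n}(I)\mid\hat{\cal E}_{k_n}(I_n))/\hat E_r(k_n)$ over $I$, which builds the missing factor in automatically. Consequently the ``calibration'' of $\eps$ and $k_n$ you propose is unnecessary, and your fallback of choosing $k_n$ so that $\hat E_r(k_n)>n^r$ is impossible in any case, since $\hat E_r(k_n)$ is an expected number of $r$-subsets of $[n]$ and hence at most $\binom{n}{r}$. With the factor $\hat P_r(k_n)$ restored, your computation closes for the $\zeta$ supplied by Lemma~\ref{L_mueps} and the proof coincides with the paper's.
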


\begin{proof}
Using Lemmas~\ref{L_mueps},\ref{L_hat:cr2mm}, 
we show by the second moment method that, 
with high probability,
$\G_*$ contains a susceptible subgraph 
of size at least $(\beta_r^*(\alpha)+\delta_0)\log{n}$, 
for some $\delta_0>0$.
By Lemma~\ref{L_term}, this gives the claim. 

Recall that Lemma~\ref{L_mueps} provides
$\delta,\zeta>0$ so that 
if $k_n/\log{n}
\in[\beta_*(\alpha)+\delta/2,\beta_*(\alpha)+\delta]$,
then $\hat E_r(k_n)\gg n^\zeta$. Fix such a sequence $k_n$.
For each $n$, fix $I_n\subset [n]$ with $|I_n|=r$.
By Lemma~\ref{L_hat:cr2mm}, it follows that 
\begin{align*}
\sum_{I}
\frac{\P(\hat {\cal E}_{k_n}(I)|\hat {\cal E}_{k_n}(I_n))}
{\hat E_r(k_n)}
&\le 1+o(1)
+{n\choose r}^{-1}\sum_{m=1}^{r-1}
{n-m\choose r-m}o((n/k_n)^m)\\
&+n^{-\zeta}\sum_{m=0}^{r-1}
{n-m\choose r-m}\left(
O(k_n^{2r}(k_np)^{r(r-m)})+(k_n/n)^{r-m}
\right)\\
&\le 1+o(1)
+\sum_{m=1}^{r-1}
o((r/k_n)^m)\\
&+n^{-\zeta}\sum_{m=0}^{r-1}
\left(
O(k_n^{2r}((k_np)^rn)^{r-m})+(k_n)^{r-m}
\right)\\
&=1+o(1)+O(n^{-\zeta}\log^{3r}n)\\
&\sim 1
\end{align*}
where the sum is over $I\neq I_n$ 
with $|I|=r$, and $|I\cap I_n|=m$ for some $0\le m<r$.
Hence, by the second moment method,
with high probability some 
$\hat r$-percolation on $\Gnp$ 
grows to size $k_n$ and thus
$\Gnp$ contains a suceptible
subgraph of size $k_n$, as required. 
As discussed, the claim follows by the choice of $k_n$
and Lemma~\ref{L_term}. 
\end{proof}

\begin{claim}
  \label{Cl_sprinkle}
  There is some $A=A(\eps)$ so that if $U_0$ is a set of size 
  $|U_0|\geq A\log n$, then 
  with high probability, $r$-percolation on 
  $\bigcup_{i\ge1}\G_i$ initialized at $U_0$ infects a set
  of vertices of order $n/\log n$.
\end{claim}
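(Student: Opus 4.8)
The plan is to run $r$-bootstrap percolation on $\bigcup_{i\ge1}\G_i$, starting from $U_0$, in stages indexed by $i$, where at stage $i$ we reveal the edges of $\G_i$ only, and show that the infected set grows by a constant factor at each stage, from $\Theta(\log n)$ up to $\Theta(n/\log n)$, after which it stops (or we simply stop tracking). The point of the geometrically decreasing edge probabilities $p_i = 2^{-i(r-1)/r}p_\eps$ is to match the geometric growth of the infected set: if after stage $i$ the infected set $W_i$ has size roughly $2^i A\log n$, then each vertex $v\notin W_i$ has, independently, a $\mathrm{Binomial}(|W_i|, p_{i+1})$ number of $\G_{i+1}$-neighbours in $W_i$, with mean of order $(2^i A\log n)\cdot 2^{-(i+1)(r-1)/r}p_\eps \asymp 2^{i/r} A\log n\cdot p_\eps$. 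Since $p_\eps = (\eps/(n\log^{r-1}n))^{1/r}$, this mean is $\asymp 2^{i/r}(A^r \eps)^{1/r}(\log n / n)^{1/r}\cdot(\text{poly}\log)$, which is $o(1)$ for the relevant range; what matters is that the probability that $v$ has at least $r$ such neighbours is $\asymp \binom{|W_i|}{r}p_{i+1}^r \asymp (2^iA\log n)^r 2^{-(i+1)(r-1)}p_\eps^r / r! \asymp 2^i (A\log n)^r \eps /(r!\, n\log^{r-1}n)$ up to constants. Summed over the $\asymp n$ vertices outside $W_i$, the expected number of newly infected vertices is $\asymp 2^i (A\log n)^r \eps /(r!\,\log^{r-1}n) = \asymp 2^i A^r \eps \log n /r!$, which exceeds $2^{i+1}A\log n = |W_{i+1}|$ provided $A^{r-1}\eps$ is a large enough constant — this is exactly how $A=A(\eps)$ is chosen.

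The steps I would carry out are: (i) fix $A=A(\eps)$ large enough that the expected one-step growth factor computed above exceeds $2$ with room to spare; (ii) define $W_0 = \l U_0, \G_1\r_r$ (or just $U_0$), and inductively, having revealed $\G_1,\dots,\G_i$ and obtained a set $W_i$ with $|W_i|\ge 2^i A\log n$, reveal $\G_{i+1}$ and let $W_{i+1} = \l W_i, \G_{i+1}\r_r$ — crucially the edges of $\G_{i+1}$ are fresh, so conditionally on $W_i$ the indicator that each $v\notin W_i$ joins $W_{i+1}$ is an independent event depending only on edges from $v$ into $W_i$; (iii) apply a Chernoff/second-moment concentration bound: the number of new vertices is a sum of $\sim n$ independent Bernoullis with total mean $\mu_i \ge 2^{i+1}A\log n \cdot c$ for a constant $c>1$, so by a lower-tail Chernoff bound it is at least $2^{i+1}A\log n$ except with probability $\exp(-\Omega(\mu_i)) \le \exp(-\Omega(A\log n)) = n^{-\Omega(A)}$; (iv) take a union bound over the $O(\log n)$ stages needed to reach size $\Theta(n/\log n)$; since each stage fails with probability $n^{-\Omega(A)}$ and there are only $O(\log n)$ stages, the whole cascade succeeds with high probability once $A$ is large enough. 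A minor point to handle in step (iii) is that we only want a \emph{lower} bound on growth — we never need an upper bound, so we may freely truncate: if at some stage $W_i$ would already exceed $n/\log n$, we simply declare success. Also, vertices already in $W_i$ stay infected, and since the $W_i$ are nested, the events ``$v$ newly infected at stage $i+1$'' only involve edges not yet revealed, keeping independence clean.

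The main obstacle, and the only place requiring real care, is maintaining the conditional independence needed for the Chernoff bound while the infected set $W_i$ itself is a random subset of $[n]$ determined by previously-revealed randomness. The resolution is the standard multi-round exposure / sprinkling argument: because $\G_{i+1}$ is independent of $\G_1,\dots,\G_i$, conditionally on the entire history (in particular on $W_i$), the edges of $\G_{i+1}$ are still an independent family, so for each $v\notin W_i$ the number of $\G_{i+1}$-neighbours in $W_i$ is genuinely $\mathrm{Binomial}(|W_i|,p_{i+1})$ and these counts are independent across $v$. One should also note that when computing $W_{i+1}=\l W_i,\G_{i+1}\r_r$ we are entitled to use only the ``one-step'' infections (vertices with $\ge r$ neighbours directly in $W_i$) for the lower bound — iterating the bootstrap rule within $\G_{i+1}$ only helps — so the bound $|W_{i+1}|\ge |W_i| + (\text{one-step new vertices})$ suffices and keeps the analysis to a single round of exposure per stage. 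A secondary technical point is bounding $\binom{|W_i|}{r}p_{i+1}^r$ from below by $(1-o(1))|W_i|^r p_{i+1}^r/r!$ uniformly over the $O(\log n)$ relevant stages, which holds since $|W_i|p_{i+1}\le (n/\log n)\cdot p_\eps = o(1)$ throughout, and bounding $(1-p_{i+1})^{|W_i|-r}\ge 1-o(1)$ similarly; these are routine and do not affect the constant-factor conclusion.
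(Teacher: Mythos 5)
Your proposal is correct and follows essentially the same route as the paper's proof: geometric growth in stages, revealing the fresh graph $\G_{i+1}$ at stage $i+1$, lower-bounding the one-step infection probability by $\binom{|U_i|}{r}p_{i+1}^r(1-p_{i+1})^{|U_i|-r}\gtrsim(|U_i|p_{i+1}/r)^r$, choosing $A\asymp(C_r/\eps)^{1/(r-1)}$ so the expected growth factor exceeds $2$, and applying Chernoff plus a union bound over the $O(\log n)$ stages. The only cosmetic difference is that the paper fixes $|U_{i+1}|=2^{i+1}A\log n$ exactly (taking the lowest-indexed qualifying vertices) rather than truncating at the end, which changes nothing.
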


\begin{proof}
Let $A=2r(16r/\eps)^{1/(r-1)}$. Moreover assume that $n$ is sufficiently 
large and $\eps$ is sufficiently small so that $A>2$ and 
$A(2^{1-r}\eps/\log{n})^{1/r}<1/2$. 

  We define a sequence of sets $U_i$ as follows.  
  Given $U_i$, we
  consider all vertices not in $U_0,\dots,U_i$, and add to $U_{i+1}$
  some $2^{i+1} A\log n$ vertices 
  that are $r$-connected in $\G_{i+1}$ to $U_i$ (say,
  those of lowest index).

  We first argue that, as long as at most $n/2$
  vertices are included in $\bigcup_{j=1}^iU_j$
  and $2^i<n/\log^2 n$, the probability that
  we can find $2^{i+1} A\log n$ vertices to populate $U_{i+1}$ is at least
  $1-n^{-1}$. Indeed, 
a vertex not in $\bigcup_{j=1}^iU_j$
is at least $r$-connected in 
$\G_{i+1}$ to $U_i$ with probability
bounded from below by  
\[
{|U_i|\choose r}p_{i+1}^r(1-p_{i+1})^{|U_i|-r}
\ge\left(\frac{|U_i| p_{i+1}}{r}\right)^r(1-|U_i|p_{i+1})
\ge
\frac{1}{2}\left(\frac{|U_i| p_{i+1}}{r}\right)^r,
\] 
since, for all large $n$, 
\[
|U_i| p_{i+1}
=2^{-(r-1)/r}(A\log{n})\left(\frac{2^i\eps}{n\log^{r-1}n}\right)^{1/r}
\le A\left(\frac{2^{1-r}\eps}{\log{n}}\right)^{1/r}\le \frac{1}{2}.
\] 
Hence the expected number of such vertices is at least
\[
\frac{n}{2}\frac{1}{2}\left(\frac{|U_i| p_{i+1}}{r}\right)^r
=\frac{\eps}{4r}\left(\frac{A}{2r}\right)^{r-1}(2^iA\log{n})
=2^{i+2}A\log{n}
\]
by the choice of $A$.
Therefore by Chernoff's bound, such a set $U_{i+1}$
of size $2^{i+1}A\log{n}$
can be selected with probability at least 
$1-\exp(-2^{i-1}A\log{n})\le 1-n^{-1}$, since $A>2$ and $i\ge0$, as 
required. 
  
Since the number of levels before reaching $n/2$ vertices is at most
$\log n$, the claim follows. 
\end{proof}

By Claims~\ref{Cl_U0},\ref{Cl_sprinkle}, with high probability, 
$\G_*\cup \bigcup_{i\ge1}\G_i$ contains an $r$-infectious 
subgraph on some $U\subset[n]$ of order  $n/\log{n}$.  
To conclude, we observe that given this, by adding $\G_0$ 
we have that 
$\G=\G_*\cup \bigcup_{i\ge0}\G_i$ is susceptible with high
(conditional) probability. 
Indeed,  
the expected number of vertices 
in $U^c$ 
which are connected in $\G_0$ 
to at most $r-1$ vertices
of $U$ 
is bounded from above by
\[
n\sum_{j=0}^{r-1}
{|U|\choose j}p_0^j(1-p_0)^{|U|-j}
\ll
n(|U|p_0)^r e^{-p_0 (|U|-r)}
\ll n^re^{-n^{(1-1/r)/2}}
\ll1.
\]
Hence $\G$ is susceptible 
with high probability, as required.
\end{proof}

\section{Time dependent branching processes}
\label{S_BP}

In this section, we prove Theorem~\ref{T_BP}, giving estimates for the survival
probabilities for a family of non-homogenous branching process which are
closely related to contagious sets in $\Gnp$.

Recall that in our branching process, the $n$th individual has a Poisson
number of children with mean $\binom{n}{r-1} \eps$.  This does not
specify the order of the individuals, i.e.\ which of these children is
next.  While the order would affect the resulting tree, the choice of
order clearly does not affect the probability of survival.  In light of
this, we can use the breadth first order: Define generation $0$ to be
the first $r-1$ individuals, and let generation $k$ be all children of
individuals from generation $k-1$.  All individuals in a generation
appear in the order before any individual of a later generation.  Let
$Y_t$ be the size of generation $t$, and $S_t = \sum_{i\leq t} Y_i$.

Let $\Psi_r(k,i)$ be the probability that for some $t$ we have $S_t=k$ and
$Y_t=i$.

\begin{lemma}\label{L_BP}
  We have that 
  \[
    \Psi_r(k,i)
    =
    \frac{e^{-\eps{k-i\choose r}}\eps^{k-r}}{(k-r)!}m_r(k,i).
  \]
\end{lemma}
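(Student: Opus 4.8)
The plan is to compute $\Psi_r(k,i)$ by conditioning on the full shape of the generation-structured tree and then summing. Concretely, a realization in which $S_t=k$ and $Y_t=i$ for some $t$ is determined by: (a) an unlabelled rooted tree $T$ on $k$ vertices, arranged in generations, with exactly $i$ vertices in its last generation $t$; (b) the event that the branching process produces exactly this tree and then the entire next generation $t+1$ is empty. Ordering the individuals within $T$ by the breadth-first order as in the setup, the $n$th individual has $\Poi(\binom{n}{r-1}\eps)$ children independently, so the probability that the $n$th individual (for $r-1\le n\le k$) produces exactly its prescribed number of children $c_n$ is $e^{-\binom{n}{r-1}\eps}(\binom{n}{r-1}\eps)^{c_n}/c_n!$. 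Crucially, the vertices that contribute to $Y_{t+1}$ — i.e.\ produce children that would be counted as ``next'' — are precisely those in generations $\le t$ that lie \emph{below} the top generation, namely the first $k-i$ individuals; these must produce zero further children beyond what is already in $T$. Wait — more carefully: every vertex of $T$ in generations $0,\dots,t-1$ has its children already inside $T$, and for the process to have $S_t=k$ exactly, the vertices of generation $t$ (the last $i$ individuals) must have no children at all. So the constraint ``$S_t=k, Y_t=i$'' forces the last $i$ individuals to be childless, and the first $k-i$ individuals have their children exactly accounting for the $k-r+1$ non-root... let me recount: generation $0$ has $r-1$ individuals, and the total number of children produced across all individuals equals $k-(r-1)$, all of them among the first $k-i$ individuals.

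First I would therefore write
\[
\Psi_r(k,i)
= \sum_{T} \ \prod_{n=r-1}^{k-i} \frac{e^{-\binom{n}{r-1}\eps}\left(\binom{n}{r-1}\eps\right)^{c_n(T)}}{c_n(T)!}
\ \cdot \prod_{n=k-i+1}^{k} e^{-\binom{n}{r-1}\eps},
\]
where the sum is over the relevant generation-structured trees $T$ (with the breadth-first labelling), and $c_n(T)$ is the number of children of individual $n$ in $T$. The exponential factors combine to $\exp(-\eps\sum_{n=r-1}^{k}\binom{n}{r-1}) = \exp(-\eps\binom{k+1}{r})$; hmm, that does not match the stated $\exp(-\eps\binom{k-i}{r})$, so in fact the correct bookkeeping must be that only the first $k-i$ individuals' ``child-count'' Poisson variables are constrained (the childless top-level vertices are simply leaves of $T$ and impose no Poisson constraint beyond being reached), giving $\exp(-\eps\sum_{n=r-1}^{k-i}\binom{n}{r-1}) = \exp(-\eps\binom{k-i+1}{r})$; and a further shift/reindexing in how individuals are counted (the process indexes children, and the $n$th \emph{child} is individual $n$, with generation $0$ being individuals $1,\dots,r-1$) reconciles this to $\binom{k-i}{r}$. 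I would pin down this indexing convention carefully at the start, since it is the source of the exact binomial argument, and then the exponential prefactor is exactly $e^{-\eps\binom{k-i}{r}}$ and the power of $\eps$ is $\eps^{\sum c_n} = \eps^{k-r}$ (the total number of children is $k-(r-1)$, but again the indexing shift by one gives $k-r$; equivalently $\eps^{k-r}$ since there are $k-r$ edges below the top level being ``used up''). Pulling $e^{-\eps\binom{k-i}{r}}\eps^{k-r}$ out front, what remains is
\[
\sum_{T} \prod_{n} \frac{\binom{n}{r-1}^{c_n(T)}}{c_n(T)!}
= \frac{1}{(k-r)!}\, m_r(k,i),
\]
and the whole content of the lemma reduces to proving this last combinatorial identity.

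The main obstacle — and the step I would spend the most care on — is the bijective/counting argument identifying $\sum_T \prod_n \binom{n}{r-1}^{c_n(T)}/c_n(T)!$ with $m_r(k,i)/(k-r)!$. The idea is that $m_r(k,i)$ counts minimally susceptible \emph{labelled} graphs on $[k]$ with $[r]$ contagious and $i$ vertices in the top level; such a graph, explored by infection time, is exactly a generation-structured tree where each newly infected vertex in level $n$ of the exploration order chooses $r$ earlier vertices to attach to, but at least... actually the natural correspondence is: in the branching tree $T$, individual $n$ has $c_n$ children, and those children, when we build the corresponding graph, are the vertices infected with individual $n$ among their $r$ infecting neighbours — the $\binom{n}{r-1}$ factor per child counting the choice of the \emph{other} $r-1$ infecting neighbours among the $n-1$... one must check this matches $a_r$-type counts from \eqref{E_mrec}. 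I would make this precise by showing both $\Psi_r$ and the right-hand side satisfy the \emph{same} recursion in $k$: peeling off the top generation (the $i$ childless top individuals) from $T$ leaves a tree on $k-i$ individuals with some $j$ in its new top generation, and the $i$ removed individuals each attach via $\binom{\cdot}{r-1}$-many choices — this should reproduce exactly \eqref{E_mrec} after dividing by $(k-r)!$ and matching $a_r(k-i,j)$ with the relevant sum of $\binom{n}{r-1}$'s, together with the base case $m_r(k,k-r)=1 = \Psi_r(k,k-r)\cdot(k-r)!\,e^{\eps\binom{r}{r}... }$. Hence I would prove the lemma by induction on $k$, verifying the base case $i=k-r$ directly (the tree is a ``caterpillar'' with one individual per generation below the top, forced choices, giving $\Psi_r(k,k-r)=e^{-\eps\binom{r}{r}... }\eps^{k-r}/(k-r)!$ — wait, for $i=k-r$ every level below the top has size... in fact $m_r(k,k-r)=1$ forces a very rigid structure, and the Poisson computation matches), and then matching the recursions. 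The delicate point throughout is keeping the off-by-one indexing conventions consistent between ``$n$th individual'', ``$n$th child'', generation $0$ of size $r-1$, and the $[r]$-vs-$[r-1]$ discrepancy — getting the binomial $\binom{k-i}{r}$ and the power $k-r$ exactly right — which is purely bookkeeping but must be done with care.
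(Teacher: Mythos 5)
Your route is different from the paper's and, once two loose ends are tightened, it works. The paper sidesteps the sum over trees entirely by first passing to an equivalent process in which every $r$-subset $S$ of the current population independently produces $\Poi(\eps)$ children; a realization is then just the collection $(X_S)$, its probability is $\prod_S e^{-\eps}\eps^{X_S}/X_S!$, the relevant subsets are the $\binom{k-i}{r}$ subsets of the $k-i$ individuals below the top level, and a given collection $(X_S)$ corresponds to exactly $(k-r)!/\prod_S X_S!$ labelled graphs counted by $m_r(k,i)$ — so the identity falls out in one line with no induction. You instead keep the per-individual offspring variables $\Poi\bigl(\binom{n}{r-1}\eps\bigr)$ and reduce the lemma to the combinatorial identity $\sum_{(c_n)}\prod_n\binom{n}{r-1}^{c_n}/c_n!=m_r(k,i)/(k-r)!$, to be proved by matching recursions. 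That identity is true and your induction does close: peeling off the top generation, the $i$ new individuals are distributed among the $j$ individuals of the previous top level (indices $k-i-j,\dots,k-i-1$ in the $0$-based convention), and the multinomial theorem gives
\[
\sum_{c_{k-i-j}+\cdots+c_{k-i-1}=i}\ \prod_{n}\frac{\binom{n}{r-1}^{c_n}}{c_n!}
=\frac{1}{i!}\Bigl(\sum_{n=k-i-j}^{k-i-1}\tbinom{n}{r-1}\Bigr)^{i}
=\frac{a_r(k-i,j)^i}{i!},
\]
using $a_r(x,y)=\sum_{\ell=1}^{y}\binom{x-\ell}{r-1}$ (this is Claim~\ref{C_a} in telescoped form); this is precisely \eqref{E_mrec} divided by $(k-r)!$, since $\binom{k-r}{i}/(k-r)!=1/\bigl(i!\,(k-r-i)!\bigr)$. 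The multinomial step is the ingredient your sketch is missing, so you should state it explicitly. The second loose end is the indexing: your successive guesses $\binom{k+1}{r}$ and $\binom{k-i+1}{r}$ for the exponent both come from constraining the wrong set of individuals. The formula of the lemma corresponds to the convention in which generation $0$ consists of $r$ individuals indexed $0,\dots,r-1$ and only the offspring variables of the $k-i$ individuals $0,\dots,k-i-1$ below the top level are constrained (the top-level individuals may still reproduce, since the event only asks that $S_t=k$ and $Y_t=i$ for \emph{some} $t$); then $\sum_{n=r-1}^{k-i-1}\binom{n}{r-1}=\binom{k-i}{r}$ and $\sum_n c_n=k-r$ exactly. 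You are right that this is pure bookkeeping, but it is not optional bookkeeping — without fixing the convention the exponent genuinely comes out wrong, as your own first two attempts show.
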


\begin{proof} 
  We first give an equivalent branching process.  Instead of each
  individual having a number of children, children will have $r$
  parents.  We start with $r$ individuals (indexed $0,\dots,r-1$), and
  every subset of size $r$ of the population gives rise to an
  independent $\Poi(\eps)$ additional individuals.  Thus the initial set
  of $r$ individuals produces $\Poi(\eps)$ further individuals, indexed
  $r,\dots$.  Individual $k$ together with each subset of $r-1$ of the
  previous individuals has $\Poi(\eps)$ children, so overall individual
  $k$ has $\Poi\left(\binom{k}{r-1} \eps \right)$ children where $k$ is
  the maximal parent.  

  Let $X_S$ be the number of children of a set $S$ of individuals.  A
  graph contributing to $m_r(k,i)$ requires $\Poi(\eps)$ variables to
  equal $X_S$, so the probability is $\prod e^{-\eps} \eps^{X_s} /
  X_S!$.  Up to generation $t$ this considers $\binom{k-i}{r}$ sets, and
  $\sum X_S = k-r$, giving the terms involving $\eps$ in the claim.  The
  combinatorial terms $\prod X_S!$ and $(k-r)!$ come from possible
  labelings of the graph.
\end{proof}

\begin{proof}[Proof of Theorem~\ref{T_BP}]
Up to the $o(1)$ term appearing in the statement
of the theorem, the survival of $(X_t)$ is equivalent to the 
probability $p_S$ that for some $t$ we have that $S_t\ge k_r$,
where $(S_t)_{t\ge0}$ is as defined above Lemma~\ref{L_BP} and 
$k_r=k_r(\eps)$ is as in the theorem. By Lemma~\ref{L_BP}, 
\[
p_S\ge \sum_i\Psi_r(k_r,i)
\ge \frac{e^{-\eps{k_r\choose r}}\eps^{k_r-r}}{(k_r-r)!}\sum_i m_r(k_r,i)
\ge \frac{e^{-\eps{k_r\choose r}}\eps^{k_r-r}}{(k_r-r)!} m_r(k_r).
\]
By Lemma~\ref{L_hat:mLB}, as $\eps\to0$, the right hand side is 
bounded from below by
\[
e^{-o(k_r)}e^{-(r-2)k_r-\eps{k_r\choose r}}
\left(\eps\frac{k_r^{r-1}}{(r-1)!}\right)^{k_r}
\eps^{-r}
=e^{-\frac{(r-1)^2}{r}k_r(1+o(1))}.
\]

On the other hand, we note that the formula for $\Psi_r(k,i)$ 
in Lemma~\ref{L_BP}
agrees with the upper bound for $P_r(k,i)$ in Lemma~\ref{L_Pki} 
(up to the $1+o(1)$ factor). Hence, 
using the bounds in 
Lemma~\ref{L_mUB} and 
slightly modifying of the proof of Proposition~\ref{P_SC} (since here
we have Poisson random variables instead of Binomial random variables),
it can be shown that  
\[
p_S\le 
e^{o(k_r)}
\frac{e^{-\eps{k_r\choose r}}\eps^{k_r-r}}{(k_r-r)!}m_r(k_r)
=e^{-\frac{(r-1)^2}{r}k_r(1+o(1))}
\]
completing the proof. 
\end{proof}

\section{Graph bootstrap percolation}\label{S_Hr}

Fix $r\ge2$ and a graph $H$. We say that a graph 
$G$ is \emph{$(H,r)$-susceptible} if for some $H'\subset G$
we have that $H'$ is isomorphic to $H$ and $V(H)$ is 
a contagious set for $G$. We call such a subgraph $H'$
a \emph{contagious copy of $H$}. Hence a seed, as
discussed in Section~\ref{S_seeds}, is a contagious clique. 
Let $p_c(n,H,r)$ denote the infimum over $p>0$
such that $\Gnp$ is $(H,r)$-susceptible 
with probability at least $1/2$. 

By the arguments in Sections~\ref{S_pcLB},\ref{S_pcUB}, 
with only minor changes, 
we obtain the following result. We omit the proof.

\begin{thm} \label{T_pcHr}
Fix $r\ge2$ and $H\subset K_r$ with $e(H)=\ell$. Put 
\[
\alpha_{r,\ell}=(r-1)!\left(\frac{(r-1)^2}{r^2-\ell}\right)^{r-1}.
\]
As $n\to\infty$, 
\[
p_c(n,H,r)
=
\left(\frac{\alpha_{r,\ell}}{n\log^{r-1}n}\right)^{1/r}(1+o(1)).
\]
\end{thm}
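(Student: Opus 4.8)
The plan is to rerun the arguments of Sections~\ref{S_pcLB} and~\ref{S_pcUB}, tracking the single new ingredient: a contagious copy of $H$ on a fixed $r$-set $[r]$ contributes exactly $\ell$ edges inside $[r]$, and these play no role in the $r$-bootstrap dynamics, since all of $[r]$ is infected from the outset. Accordingly, call a graph on $[k]$ \emph{minimally $(H,r)$-susceptible} if $[r]$ spans a copy of $H$, is a contagious set, and the graph has exactly $\ell+r(k-r)$ edges, and let $m_{H,r}(k,i)$ count these with $i$ top-level vertices. Each such graph is a minimally susceptible graph as in Section~\ref{S_pcLB} together with one of the $c_H:=|\{H'\subseteq K_r:H'\cong H\}|$ copies of $H$ on $[r]$ --- a constant-size rigid gadget disjoint from the percolation tree --- so $m_{H,r}(k,i)=c_H\,m_r(k,i)$, and the exponential asymptotics in Lemmas~\ref{L_mUB} and~\ref{L_hat:mkiLB} are unchanged. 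The one genuine change is probabilistic: for a fixed $r$-set $I$, the event that $I$ spans a copy of $H$ depends only on edges inside $I$, hence is independent of every percolation event, and has probability $(1+o(1))c_Hp^\ell$ as $p\to0$. Thus $E_{H,r}(k,i)=(1+o(1))c_Hp^\ell E_r(k,i)$ and $\hat E_{H,r}(k,i)=(1+o(1))c_Hp^\ell\hat E_r(k,i)$; since $p=\vartheta_r(\alpha,n)$ gives $p^\ell$ of order $n^{-\ell/r}\log^{-\ell(r-1)/r}n$, Lemma~\ref{L_Eki} becomes $E_{H,r}(k,i)\lesssim n^{\mu_{H,r}(\alpha,\beta,\gamma)}\log^{r(r-1)}n$ with $\mu_{H,r}:=\mu_r-\ell/r$ ($\mu_r$ as in \eqref{E_mu}), and Lemma~\ref{L_hat:EkiLB} becomes $\hat E_{H,r}(k,i)\ge n^{\mu_{H,r,\eps}-o(1)}$ with $\mu_{H,r,\eps}:=\mu_{r,\eps}-\ell/r$.

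For the lower bound ($\alpha<\alpha_{r,\ell}$) I would mimic Proposition~\ref{P_SC} with $\mu_{H,r}^*(\alpha,\beta):=\mu_{H,r}(\alpha,\beta,0)=\mu_r^*(\alpha,\beta)-\ell/r$. Since this differs from $\mu_r^*$ by a constant it has the same $\beta$-derivative, hence is decreasing in $\beta$; and since $\ell\le\binom r2<r^2$ it runs from $r-\ell/r>0$ at $\beta\to0^+$ down to $-\infty$, so it has a unique positive zero $\beta^H_*(\alpha)<\beta_*(\alpha)$. The analog of Lemma~\ref{L_beta*}(ii) is the identity (using $\alpha\beta_r(\alpha)^{r-1}/(r-1)!=1$)
\[
  \mu_{H,r}^*(\alpha,\beta_r(\alpha))=r-\frac{\ell}{r}-\beta_r(\alpha)\frac{(r-1)^2}{r},
\]
which vanishes exactly when $\beta_r(\alpha)=(r^2-\ell)/(r-1)^2$, i.e.\ (solving $((r-1)!/\alpha)^{1/(r-1)}=(r^2-\ell)/(r-1)^2$) when $\alpha=\alpha_{r,\ell}$; since $\beta_r$ and $\mu_{H,r}^*(\alpha,\cdot)$ are decreasing, $\alpha<\alpha_{r,\ell}$ forces $\mu_{H,r}^*(\alpha,\beta_r(\alpha))<0$ and $\beta^H_*(\alpha)<\beta_r(\alpha)$. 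Then the two-step argument of Proposition~\ref{P_SC} runs verbatim: the analog of Claim~\ref{C_smallbeta} is identical (both sides shift by $-\ell/r$), so summing $E_{H,r}(k,i)$ over $k\le\beta_r(\alpha)\log n$ gives $\lesssim n^{\mu_{H,r}^*(\alpha,\beta^H_*(\alpha)+\delta)}\log^{r(r-1)+1}n\ll1$; and for $k\ge\beta_r(\alpha)\log n$ the analog of Claim~\ref{C:Epmax} gives $E_{H,r}(k,i)p_{H,r}(k,i)\lesssim p^\ell n^{\mu_r^*(\alpha,\beta_r(\alpha))}\log^{r(r-1)}n=n^{\mu_{H,r}^*(\alpha,\beta_r(\alpha))+o(1)}\ll1$, with $p_{H,r}(k,i)=p_r(k,i)$ since the further-growth probability does not involve the $H$-structure. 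Hence w.h.p.\ no contagious copy of $H$ infects more than $(\beta^H_*(\alpha)+\delta)\log n$ vertices, so $\Gnp$ is not $(H,r)$-susceptible.

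For the upper bound ($\alpha>\alpha_{r,\ell}$) I would run the second-moment argument of Section~\ref{S_pcUB} on $\hat r$-percolations from $H$-spanning seeds --- seeds that span a copy of $H$, with triangle-free percolation structure outside the seed, exactly as in Section~\ref{S_pcUB}. Since $\hat E_{H,r}(k)=(1+o(1))c_Hp^\ell\hat E_r(k)$, the analog of Lemma~\ref{L_mueps} holds with $\alpha_{r,\ell},\beta^H_*$ in place of $\alpha_r,\beta_*$: the critical size is $\beta^H_*(\alpha_{r,\ell})\log n=\beta_r(\alpha_{r,\ell})\log n$, mirroring $\beta_*(\alpha_r)=\beta_r(\alpha_r)$, and one checks via Lemma~\ref{L_hat:EkiLB} that for $\alpha$ slightly above $\alpha_{r,\ell}$ and $k_n/\log n$ slightly above $\beta^H_*(\alpha)$ there is some $i$ with $\hat E_{H,r}(k_n,i)\gg n^{\zeta}$, $\zeta=\zeta(r,\ell,\eps)>0$ --- equivalently $\max_\gamma\mu_{r,\eps}(\alpha,\beta,\gamma)>\ell/r$ just past $\beta^H_*(\alpha)$, which holds because $\alpha>\alpha_{r,\ell}$ leaves room to spare (at $\beta=\beta_r(\alpha)$ the maximum over $\gamma$ equals $\mu_r^*(\alpha,\beta_r(\alpha))>\ell/r$ when attained at $\gamma=0$, the general case being the analog of the proof of Lemma~\ref{L_beta*}). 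The correlation bounds of Lemmas~\ref{L_corr} and~\ref{L_hat:cr2mm} carry over with the same exponents: the two copies of $H$ add in total at most $2\binom r2=O(1)$ edges, so any shared subgraph still has at most $\lfloor q^2/4\rfloor+O(1)$ edges by Mantel's Theorem, the count \eqref{E_nu} shifts only by an additive constant, and these $O(1)$ edges cost at most a bounded power of $n$ (for fixed $r$), dominated by the $n^{-\Omega(1)}$ factors attached to any seed overlap in the second-moment sum and so absorbed into the $o(1)$ and $O(\cdot)$ terms. The analog of Lemma~\ref{L_term} for $(H,r)$-terminal percolations (those started from an $H$-spanning seed) holds with $\mu_{H,r}^*,\beta^H_*$ --- the factor $p^\ell$ merely shifts the exponent --- so once an $(H,r)$-percolation reaches size $(\beta^H_*(\alpha)+\delta_0)\log n$ it w.h.p.\ grows to $A\log n$; and the sprinkling step (Claims~\ref{Cl_U0},~\ref{Cl_sprinkle} and the final $\G_0$ step) is unchanged, since once the contagious copy of $H$ is in place no further $H$-structure is needed. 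Matching bounds give $p_c(n,H,r)=(\alpha_{r,\ell}/(n\log^{r-1}n))^{1/r}(1+o(1))$; as a check, $\ell=0$ recovers Theorem~\ref{T_main}, and $r=2,\ H=K_2$ gives $\ell=1$, $\alpha_{2,1}=1/3$, matching Theorem~\ref{T_SP}.

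The step needing the most care --- what the paper means by ``only minor changes'' --- is the second-moment/independence part: establishing the analog of Lemma~\ref{L_mueps} (the first-moment lower bound at the critical size, where verifying $\max_\gamma\mu_{r,\eps}(\alpha,\beta,\gamma)>\ell/r$ just above $\beta^H_*(\alpha)$ is a calculus exercise of the same flavour as Lemma~\ref{L_beta*}), and checking that the $O(1)$ extra edges of the two $H$-copies --- whether they appear as seed edges that are conditioned on and hence bounded by $1$ rather than $p$, or as triangle-creating edges relaxing Mantel's bound --- do not spoil the estimates of Lemma~\ref{L_corr}, which they do not, since for fixed $r$ they cost only a bounded power of $n$ that is beaten by the $n^{-\Omega(1)}$ savings accompanying any seed overlap. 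Everything else is bookkeeping: the $\ell$ seed edges form a constant-size gadget decoupled from the percolation tree, so the recursions \eqref{E_mrec}, \eqref{E_hat:mrec} and Lemmas~\ref{L_mUB},~\ref{L_hat:mkiLB} transfer immediately, and the one elementary point is that the factor $p^\ell$ shifts every relevant exponent of $n$ by $-\ell/r$ while the value of $\alpha$ at which $\mu_r^*(\alpha,\beta_r(\alpha))=\ell/r$ is exactly $\alpha_{r,\ell}$.
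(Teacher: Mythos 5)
The paper omits the proof of Theorem~\ref{T_pcHr}, saying only that it follows from the arguments of Sections~\ref{S_pcLB} and~\ref{S_pcUB} with minor changes, and your reconstruction identifies exactly those changes: the independent seed factor $c_Hp^\ell$ shifts every exponent of $n$ by $-\ell/r$, and the critical equation $\mu_r^*(\alpha,\beta_r(\alpha))=\ell/r$, i.e.\ $\beta_r(\alpha)=(r^2-\ell)/(r-1)^2$, gives precisely $\alpha_{r,\ell}$, consistent with the cases $\ell=0$ and $(r,\ell)=(2,1)$. The one place your justification is terse --- absorbing the up-to-$p^{-\binom{m}{2}}$ cost of shared $H$-edges when the seeds overlap in $m\ge2$ vertices --- is indeed correct, but note that the displayed bound $o((n/k)^m)\hat P_r(k)$ of Lemma~\ref{L_hat:cr2mm} yields only a polylogarithmic saving $o(k^{-m})$ after normalization and would not suffice by itself; one must open the proof of Lemma~\ref{L_corr} and use the factor $(k^2/(np^{q/4}))^q$ with $q\ge m$, which supplies the genuinely polynomial saving $n^{-\Omega(m)}$ that dominates $p^{-\binom{m}{2}}$ since $\binom{m}{2}/r<m/2$ for $m<r$.
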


We obtain Theorem~\ref{T_SP}, from which Theorem~\ref{T_K4} follows,  
as a special case.  

\begin{proof}[Proof of Theorem~\ref{T_SP}]
The result 
follows by Theorem~\ref{T_pcHr},
 taking $r=2$ and $\ell=1$,
in which case $\alpha_{2,1}=1/3$.
\end{proof}

\appendix
\section{Technical lemmas}

We collect in this appendix several technical results used above.

\subsection{Proof of Claim~\ref{C:induct}} 
\label{A:induct}

\begin{proof}[Proof of Claim~\ref{C:induct}]
By the bound 
$i!>\sqrt{2\pi i}(i/e)^i$,
it suffices to verify that
\begin{equation}\label{E_ind}
\frac{(e/i)^i}{\sqrt{2\pi}}\Lambda(i) \le 1
\quad \text{ for $i\ge1$,}
\end{equation}
where $\Lambda(i) = {\rm Li}(-i+1/2,1/e)$ and   
$
{\rm Li}(s,z)=\sum_{j=1}^\infty z^j j^{-s}
$
is the polylogarithm function. 

Let $\Gamma$ denote the gamma function.
>From the relationship 
between ${\rm Li}$ and the Herwitz zeta function, 
it can be shown 
that $\Lambda(i)/\Gamma(i+1/2)\sim1$, as $i\to\infty$, and hence
$(e/i)^i\Lambda(i)\to\sqrt{2\pi}$, as $i\to\infty$. 
It appears (numerically) that $(e/i)^i\Lambda(i)$
increases monotonically to $\sqrt{2\pi}$, however 
this is perhaps not simple to verify (or in fact true). 
Instead, we find a suitable upper bound for $\Lambda(i)$. 

\begin{claim}\label{Cl_Lam}
For all $i\ge1$, we have that 
\[
\Lambda(i)
<
\Gamma(i+1/2)(1+ab^i)
\]
where $a=\zeta(3/2)$ and $b=e/(2\pi)$,
and $\zeta$ is the 
Riemann zeta function.
\end{claim}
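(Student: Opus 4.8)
The plan is to evaluate $\Lambda(i)$ \emph{exactly} via Poisson summation, after which the claim reduces to a one-line geometric bound on the resulting Fourier series. Recall that $\Lambda(i)={\rm Li}(-i+1/2,1/e)=\sum_{j\ge1}j^{i-1/2}e^{-j}$.

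First I would introduce $g(t)=t^{i-1/2}e^{-t}$ for $t>0$ and $g(t)=0$ for $t\le0$. Since $i\ge1$ we have $i-1/2>0$, so $g$ is continuous on $\RR$ and decays faster than any polynomial, while $\Lambda(i)=\sum_{j\in\Z}g(j)$. Writing $\mathbf i=\sqrt{-1}$, a direct computation gives
\[
\hat g(\xi):=\int_{\RR}g(t)e^{-2\pi\mathbf i\xi t}\,dt
=\int_0^\infty t^{i-1/2}e^{-(1+2\pi\mathbf i\xi)t}\,dt
=\frac{\Gamma(i+1/2)}{(1+2\pi\mathbf i\xi)^{i+1/2}},
\]
using the principal branch, which is legitimate since $\mathrm{Re}(1+2\pi\mathbf i\xi)=1>0$. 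In particular $|\hat g(m)|=\Gamma(i+1/2)(1+4\pi^2m^2)^{-(i+1/2)/2}=O(|m|^{-3/2})$, and together with the rapid decay of $g$ this lets Poisson summation apply, yielding the exact identity
\[
\Lambda(i)=\sum_{m\in\Z}\hat g(m)
=\Gamma(i+1/2)\left(1+\sum_{m\ne0}\frac{1}{(1+2\pi\mathbf i m)^{i+1/2}}\right).
\]

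The geometric bound then follows from $(1+4\pi^2m^2)^{1/2}\ge2\pi|m|$ and $\zeta(s)\le\zeta(3/2)$ for $s\ge3/2$ (and $i+1/2\ge3/2$):
\[
\left|\sum_{m\ne0}\frac{1}{(1+2\pi\mathbf i m)^{i+1/2}}\right|
\le2\sum_{m\ge1}(2\pi m)^{-(i+1/2)}
=2(2\pi)^{-(i+1/2)}\zeta(i+1/2)
\le\sqrt{\tfrac{2}{\pi}}\,\zeta(3/2)\,(2\pi)^{-i}.
\]
Since $\sqrt{2/\pi}<1<e\le e^{\,i}$ for every $i\ge1$, the right-hand side is strictly less than $\zeta(3/2)(e/2\pi)^i=ab^i$, and as $\Gamma(i+1/2)>0$ this gives $\Lambda(i)<\Gamma(i+1/2)(1+ab^i)$, proving the claim (indeed the two-sided estimate $|\Lambda(i)-\Gamma(i+1/2)|<\Gamma(i+1/2)\,ab^i$).

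Everything past the exact identity is immediate, so the only real work is in justifying the first step cleanly. The computation of $\hat g$ is routine; the point to be careful about is the hypothesis under which Poisson summation is applied. Rather than using a version requiring $g\in C^1$ or $C^2$ — which would need separate attention when $i=1$, where $g'$ has a mild cusp at the origin — I would invoke the form of the Poisson summation formula that needs only that $g$ is continuous and that both $g$ and $\hat g$ decay like $O((1+|x|)^{-3/2})$; here $g$ vanishes off $[0,\infty)$ and decays exponentially, and the decay of $\hat g$ was verified above, so both hypotheses hold.
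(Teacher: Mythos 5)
Your proof is correct, and it takes a genuinely different route from the paper's. The paper's argument quotes the series expansion $\mathrm{Li}(s,e^u)=\Gamma(1-s)(-u)^{s-1}+\sum_{\ell\ge0}\zeta(s-\ell)u^\ell/\ell!$, converts the zeta values at negative arguments via the functional equation, and then performs a term-by-term estimate of the resulting series (using Stirling-type bounds on $\Gamma(\ell+i+1/2)/\ell!$); this is where the constant $b=e/(2\pi)$ and the factor $e^i$ come from. You instead obtain the exact dual representation $\Lambda(i)=\Gamma(i+1/2)\bigl(1+\sum_{m\ne0}(1+2\pi\mathbf{i}m)^{-(i+1/2)}\bigr)$ by Poisson summation --- essentially the Lipschitz summation formula specialized to this half-integer order --- after which the claim is a one-line geometric estimate. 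The technical care is in the right place: the version of Poisson summation requiring only continuity plus $O((1+|x|)^{-1-\delta})$ decay of both $g$ and $\hat g$ is exactly what is needed to handle $i=1$, where $g$ has a cusp at the origin, and you verify both hypotheses (note also that the correction sum is real, by pairing $m$ with $-m$, so bounding its absolute value does give the one-sided inequality). What your approach buys is a strictly stronger, two-sided bound $|\Lambda(i)-\Gamma(i+1/2)|\le\sqrt{2/\pi}\,\zeta(3/2)(2\pi)^{-i}$, exponentially smaller than the paper's $ab^i$; the $e^i$ in the paper's bound is an artifact of its cruder tail estimate and is harmless but unnecessary. Either bound suffices for the monotonicity argument that finishes Claim~\ref{C:induct}.
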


\begin{proof}
For all $|u|<2\pi$ and 
$s\notin\NN$,  
we have the  
series representation 
\[
{\rm Li}(s,e^u)
=\Gamma(1-s)(-u)^{s-1}
+\sum_{\ell=0}^\infty\frac{\zeta(s-\ell)}{\ell!}u^\ell.
\]
Hence
\begin{equation}\label{E_LGZ}
\Lambda(i)
=\Gamma(i+1/2)
+\sum_{\ell=0}^\infty \frac{(-1)^\ell}{\ell!}\zeta(1/2-i-\ell).
\end{equation}
Recall the functional equation for $\zeta$,
\[
\zeta(x)=2^x\pi^{x-1}\sin(\pi x/2)\Gamma(1-x)\zeta(1-x).
\]
Therefore, since $\zeta(1/2+x)>0$ is decreasing in $x\ge1$
we have that, for all relevant $i,\ell$,
\begin{equation}\label{E_ZG}
|\zeta(1/2-i-\ell)|
\le a\sqrt{\frac{2}{\pi}}\frac{\Gamma(\ell+i+1/2)}{(2\pi)^{\ell+i}}
<a\frac{\Gamma(\ell+i+1/2)}{(2\pi)^{\ell+i}}.
\end{equation}

Applying \eqref{E_LGZ},\eqref{E_ZG}
(and the inequalities 
$\Gamma(x+\ell)<(x+\ell-1)^\ell\Gamma(x)$, 
$\ell!>\sqrt{2\pi \ell}(\ell/e)^\ell$,
and $(1+x/\ell)^\ell<e^\ell$),
we find that, for all $i\ge1$, 
\begin{align*}
\frac{\Lambda(i)}{\Gamma(i+1/2)}-1
&< 
\frac{a}{(2\pi)^i}
\sum_{\ell=0}^\infty \frac{(\ell+i-1/2)^\ell}{(2\pi)^{\ell}\ell!}\\
&< 
\frac{ab^i}{e^i}
\left(1+
\sum_{\ell=1}^\infty 
\frac{1}{\sqrt{2\pi \ell}}
\left(\frac{e}{2\pi}\left(1+\frac{i-1/2}{\ell}\right)\right)^\ell
\right)\\
&< 
ab^i
\left(\frac{1}{e}+
\frac{1}{\sqrt{2e\pi}}
\sum_{\ell=1}^\infty 
\left(\frac{e}{2\pi}\right)^\ell
\right)\\
&<
ab^i
\end{align*}
establishing the claim.
\end{proof}

By Claim~\ref{Cl_Lam}, the formula 
\[
\Gamma(i+1/2)
=\sqrt{\pi}\frac{i!}{4^i}{2i\choose i}, 
\]
and the bounds 
\[
{2i\choose i}<\frac{4^i}{\sqrt{\pi i}}\left(1-\frac{1}{9i}\right)
\]
and 
\[
i!
<\sqrt{2\pi i} \left(\frac{i}{e}\right)^i
\left(1-\frac{1}{12i}\right)^{-1}
\]
(valid for all $i\ge1$), we find that
\begin{equation}\label{E_indUB}
\frac{(e/i)^i}{\sqrt{2\pi}}\Lambda(i)
<\frac{4}{3}\frac{9i-1}{12i-1}
(1+ab^i)
\quad \text{for $i\ge1$}.
\end{equation}
Differentiating the right hand side of \eqref{E_indUB},
and dividing by the positive term
$4/(3(12i-1)^2)$, we obtain 
\[
3+ab^i\left(3
+\log(b)(108i^2-12i+1)\right)
\]
which, for $i\ge11$, is bounded from below by 
\[
3+108ab^i\log(b)i^2
>3-237 b^ii^2
>0.
\]
Hence, for $i\ge11$,  the right hand side of \eqref{E_indUB}
increases monotonically to $1$ as $i\to\infty$. 
It follows that 
\eqref{E_ind} holds for all $i\ge11$. 
Inequality \eqref{E_ind}, for $i\le10$, can be 
verified numerically (e.g., by interval arithmetic), 
completing the proof of Claim~\ref{C:induct}.
\end{proof}

\subsection{Proof of Claim~\ref{C_smallbeta}} 
\label{A:C_smallbeta}

\begin{proof}[Proof of Claim~\ref{C_smallbeta}]
By \eqref{E_mu}, we have that 
\[
\frac{\partial^2}{\partial\gamma^2}\mu_r(\alpha,\beta,\gamma)
=
-\frac{\alpha\beta^r}{(r-2)!}(1-\gamma)^{r-2}<0. 
\]
The result thus follows, noting that 
\[
\frac{\partial}{\partial\gamma}\mu_r(\alpha,\beta,\gamma)
=
-\beta\left(1-\frac{\alpha\beta^{r-1}}{(r-1)!}(1-\gamma)^{r-1}\right) 
\]
and hence for any $\xi<1$ and $\gamma\in(0,1)$, 
\[
\frac{\partial}{\partial\gamma}
\mu_r(\alpha,\xi\beta_r(\alpha),\gamma)
=
-\xi\beta_r(\alpha)
\left(1-(\xi(1-\gamma))^{r-1}\right)
<0.
\qedhere
\]
\end{proof}

\subsection{Proof of Claim~\ref{C:Epmax}} 
\label{A:Epmax}

\begin{proof}[Proof of Claim~\ref{C:Epmax}]
By Lemma~\ref{L_Eki}, 
for all  $k=\beta\log{n}$ and $i=\gamma k$ as in the lemma, 
we have that 
\begin{equation}\label{E_Ep,p=1}
E_{r}(k,i)\lesssim n^{\mu_r(\alpha,\beta,\gamma)}\log^{r(r-1)}n.
\end{equation}

We find a suitable upper bound for $p_{r}(k,i)$ as follows.
For $\beta<\beta_r(\alpha)$, put    
$\ell_\beta=\xi_\beta\log{n}$, where 
$\xi_{\beta}=\beta_r(\alpha)  -\beta$.
For a given set $V$ of size $k$ with
$i$ vertices identified as the top level, there are $a_r(k,i)$ ways
to select $r$ vertices in $V$ with at least one in the top level. 
Hence, for $k=\beta\log{n}$ with $\beta<\beta_r(\alpha)$, it
follows that 
\[
p_{r}(k,i)
\le 
{n\choose \ell_\beta}(a_r(k,i)p^r)^{\ell_\beta}.
\]
By Claim~\ref{C_a}, we have that 
$a_r(k,i)<ik^{r-1}/(r-1)!$.
Hence, applying the bound ${n\choose \ell}\le (ne/\ell)^\ell$,
we find that  
\[
p_{r}(k,i)
\le 
\left(\frac{e\alpha\beta^r\gamma}{\xi_\beta(r-1)!}\right)^{\ell_\beta}.
\]
Hence, by Lemma~\ref{L_Eki},  
\begin{equation}\label{E_EpUB}
E_{r}(k,i)p_{r}(k,i)
\lesssim
n^{\bar \mu_r(\alpha,\beta,\gamma)}
\log^{r(r-1)}n
\end{equation}
where
\begin{equation}\label{E_barmu}
\bar \mu_r(\alpha,\beta,\gamma)
=
\mu_r(\alpha,\beta,\gamma)
+\xi_\beta\log\left(\frac{e\alpha\beta^r\gamma}{\xi_\beta(r-1)!}\right).
\end{equation}

Therefore, by \eqref{E_Ep,p=1},\eqref{E_EpUB},
we obtain Claim~\ref{C:Epmax} by the following fact. 

\begin{claim}\label{C_barmu}
For any $\gamma\in(0,1)$, we have that 
\[
\min\{
\mu_r(\alpha,\beta,\gamma),\bar \mu_r(\alpha,\beta,\gamma)
\}\le \mu_r^*(\alpha,\beta_r)
\]
for all $\beta\in(0,\beta_r(\alpha)]$. 
\end{claim}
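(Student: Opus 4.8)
The plan is to establish the dichotomy that, for every $\beta\in(0,\beta_r(\alpha)]$ and $\gamma\in(0,1)$, at least one of $\mu_r(\alpha,\beta,\gamma)\le\mu_r^*(\alpha,\beta_r(\alpha))$ or $\bar\mu_r(\alpha,\beta,\gamma)\le\mu_r^*(\alpha,\beta_r(\alpha))$ holds; this is exactly Claim~\ref{C_barmu}. Abbreviate $\beta_r=\beta_r(\alpha)$, $\xi=\beta_r-\beta$ and $u=\alpha\beta^{r-1}/(r-1)!=(\beta/\beta_r)^{r-1}\in(0,1]$. Two exact identities organize the argument: integrating the formula $\partial_\gamma\mu_r(\alpha,\beta,\gamma)=-\beta\bigl(1-u(1-\gamma)^{r-1}\bigr)$ of Appendix~\ref{A:C_smallbeta} gives
\[
\mu_r(\alpha,\beta,\gamma)=\mu_r^*(\alpha,\beta)-\beta\gamma+\frac{\beta u\,(1-(1-\gamma)^r)}{r},
\]
and, since $\partial_b\mu_r^*(\alpha,b)=1+\log u_b-u_b\le0$ for $b\le\beta_r$ (as in the proof of Lemma~\ref{L_beta*}), with $u_b=(b/\beta_r)^{r-1}$,
\[
E:=\mu_r^*(\alpha,\beta)-\mu_r^*(\alpha,\beta_r)=\int_\beta^{\beta_r}\bigl(u_b-1-\log u_b\bigr)\,db\ \ge\ 0.
\]

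Using $1-(1-\gamma)^r\le r\gamma$, the first identity yields $\mu_r(\alpha,\beta,\gamma)\le\mu_r^*(\alpha,\beta)-\beta\gamma(1-u)$, so $\mu_r(\alpha,\beta,\gamma)\le\mu_r^*(\alpha,\beta_r)$ whenever $E\le\beta\gamma(1-u)$. On the other hand, rewriting $\tfrac{e\alpha\beta^r\gamma}{\xi(r-1)!}=\tfrac{e\beta u\gamma}{\xi}$ in \eqref{E_barmu} and substituting the same bounds,
\[
\bar\mu_r(\alpha,\beta,\gamma)-\mu_r^*(\alpha,\beta_r)\ \le\ \phi(\gamma):=E+\xi-\beta\gamma(1-u)+\xi\log\frac{\beta u\gamma}{\xi}.
\]
Since $\phi'(\gamma)=-\beta(1-u)+\xi/\gamma$, the function $\phi$ increases then decreases and $\phi(\gamma)\to-\infty$ as $\gamma\downarrow0$, so $\phi\le0$ — hence $\bar\mu_r(\alpha,\beta,\gamma)\le\mu_r^*(\alpha,\beta_r)$ — on an initial interval $\gamma\in(0,\gamma_-]$. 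It therefore remains only to handle $\gamma>\gamma_-$, and since $\phi(\hat\gamma)=\xi\log\tfrac{euE}{\xi(1-u)}$ at $\hat\gamma:=E/(\beta(1-u))$ — a value at which $\mu_r(\alpha,\beta,\hat\gamma)\le\mu_r^*(\alpha,\beta_r)$ by the first identity — a short comparison of the thresholds $\gamma_-$, the mode $\xi/(\beta(1-u))$ of $\phi$, and $\hat\gamma$ (keeping in mind that only $\gamma\in(0,1)$ is relevant) reduces the whole claim to the single inequality
\[
E\ \le\ \frac{\xi\,(1-u)}{e\,u}.
\]

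The main work, and the main obstacle, is proving this last inequality for all $\beta\in(0,\beta_r]$. I would bound $E$ in two complementary ways. For $\beta$ away from $\beta_r$, use $u_b-1-\log u_b\le-\log u_b$, so that $E\le(r-1)\bigl(\xi+\beta\log(\beta/\beta_r)\bigr)$. For $\beta$ near $\beta_r$, use instead $u_b-1-\log u_b\le(1-u_b)^2/(2u_b)$ together with $u_b\ge u$ and $1-u_b\le(r-1)(\beta_r-b)/\beta_r$, giving $E\le(r-1)^2\xi^3/(6u\beta_r^2)$; this captures the crucial fact that $\mu_r^*(\alpha,\cdot)$ has a degenerate critical point of order three at $\beta_r$ (its first two $\beta$-derivatives vanish there), so $E$ is cubically small in $\xi$. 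The delicate point is the interpolation: one must verify that, uniformly in $r\ge2$, at least one of the two bounds always lies below $\xi(1-u)/(eu)$, which requires not discarding factors of $r-1$ — in particular using $1-u=1-(\beta/\beta_r)^{r-1}\ge(r-1)(\beta/\beta_r)^{r-2}\,\xi/\beta_r$ — and a careful choice of the crossover point between the two regimes. Everything else is elementary calculus.
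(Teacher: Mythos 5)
Your two identities are correct, the formula $\phi(\hat\gamma)=\xi\log\frac{euE}{\xi(1-u)}$ checks out, and the overall strategy (for each fixed $\beta$, route through $\mu_r$ when $\gamma\ge\hat\gamma$ and through $\bar\mu_r$ when $\phi(\gamma)\le0$) is a legitimate alternative to the paper's argument, which instead fixes $\gamma$ and exploits convexity of $\mu_r$ and concavity of $\bar\mu_r$ in $\beta$, evaluating both at the special point $\beta=(1-\sqrt{\gamma/r})\beta_r$. However, there are two genuine gaps.

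First, the reduction to the single inequality $E\le\xi(1-u)/(eu)$ is not valid. What your scheme requires is $\phi\le0$ on all of $(0,\min(1,\hat\gamma))$. Since $\phi$ is unimodal with mode at $\gamma_0=\xi/(\beta(1-u))$, the supremum of $\phi$ over that interval equals $\phi(\hat\gamma)$ only when $\hat\gamma\le\gamma_0$, i.e.\ only when $E\le\xi$. When $E>\xi$ and $\gamma_0<1$, the supremum is $\phi(\gamma_0)=E+\xi\log\frac{u}{1-u}$, so the inequality you must verify there is $E\le\xi\log\frac{1-u}{u}$. This is strictly stronger than the one you isolate, because $\log v\le v/e$ for all $v>0$. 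The regime is not vacuous: for $r=10$ and $\beta=0.6\,\beta_r$ one computes $E\approx0.54\,\beta_r>\xi=0.4\,\beta_r$ and $\gamma_0\approx0.67<1$, while $\xi\log\frac{1-u}{u}\approx1.8\,\beta_r$ is an order of magnitude smaller than $\xi(1-u)/(eu)\approx14\,\beta_r$. (The stronger inequality happens to hold at this point, but your stated reduction would not produce it, so proving only $E\le\xi(1-u)/(eu)$ leaves these $(\beta,\gamma)$ uncovered.)

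Second, even the inequality you do isolate is not proved. You give two complementary upper bounds on $E$ (logarithmic for small $\beta$, cubic in $\xi$ near $\beta_r$) and then state that one must verify, uniformly in $r\ge2$, that at least one of them lies below $\xi(1-u)/(eu)$, calling this interpolation "the delicate point". That verification — together with the extra inequality from the first paragraph — is precisely the quantitative content of the claim; it is where the paper's proof spends its effort (the sub-claims controlling $f_r(\delta_\gamma,\gamma)$ and $g_r(\delta_\gamma,\gamma)$). As written, the proposal is a plan whose hard steps are deferred, and the deferred reduction step is itself stated incorrectly.
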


\begin{proof}
For convenience, we simplify notations as follows.   
Put $\beta_r=\beta_r(\alpha)$. We parametrize $\beta$
using a variable $\delta$: for $\delta\in(0,1]$, let $\beta_\delta=\delta\beta_r$. 
For $\gamma\in(0,1)$, let 
$\mu_r(\delta,\gamma)=\mu_r(\alpha,\beta_\delta,\gamma)$,
$\bar\mu_r(\delta,\gamma)=\bar \mu_r(\alpha,\beta_\delta,\gamma)$, 
and 
$\delta_\gamma=\delta_\gamma(r)=1-\sqrt{\gamma/r}$. 
Finally, put $\mu_r^*=\mu_r(1,0)=\mu_r^*(\alpha,\beta_r)$.
In this notation, Claim~\ref{C_barmu} states
that 
\[
\min\{\mu_r(\delta,\gamma),\bar \mu_r(\delta,\gamma)\}
\le
\mu_r^*,
\quad\text{ for $\delta\in(0,1]$.}
\]

Since $\alpha\beta_r^{r-1}/(r-1)!=1$,
it follows that $\alpha\beta_\delta^{r-1}/(r-1)!=\delta^{r-1}$.
Therefore, by \eqref{E_mu},\eqref{E_barmu},  
we have that 
\begin{equation}\label{E_mu-dg}
\mu_r(\delta,\gamma)
=
r-\beta_r\left(\frac{\delta^r}{r}(1-\gamma)^r
+\delta(r-2+\gamma)
-(r-1)\delta\log\delta\right)
\end{equation}
and
\begin{equation}\label{E_barmu-dg}
\bar\mu_r(\delta,\gamma)
=
\mu_r(\delta,\gamma)
+\beta_r(1-\delta)\log\left(\frac{e\gamma\delta^r}{1-\delta}\right).
\end{equation}

We obtain Claim~\ref{C_barmu} by the 
following subclaims (as we explain below the statements). 

\begin{subclaim}\label{SC_mumubar}
For any fixed $\gamma\in(0,1)$, 
we have that 
$\mu_r(\delta,\gamma)$ and $\bar\mu_r(\delta,\gamma)$ are
convex and concave in $\delta\in(0,1)$, respectively. 
\end{subclaim}

\begin{subclaim}\label{SC_mudelgam}
For $\gamma\in(0,1)$, 
we have that 
\begin{enumerate}[nolistsep,label={(\roman*)}] 
\item $\mu_r(1,\gamma)<\mu_r^*$,
\item $\mu_r(\delta_\gamma,\gamma)<\mu_r^*$, and 
\item $e\gamma\delta_\gamma^r/(1-\delta_\gamma)<1$.
\end{enumerate}
\end{subclaim}
 
Indeed, by Sub-claim~\ref{SC_mudelgam}(ii),(iii), 
we have that 
$\bar \mu_r(\delta_\gamma,\gamma)
<\mu_r(\delta_\gamma,\gamma)<\mu_r^*$. 
Therefore, noting that  
$\lim_{\delta\to1^-}\bar\mu_r(\delta,\gamma)=\mu_r(1,\gamma)$,
$\lim_{\delta\to0^+}\mu_r(\delta,\gamma)=r$, and
$\lim_{\delta\to0^+}\bar\mu_r(\delta,\gamma)=-\infty$
(see \eqref{E_mu-dg},\eqref{E_barmu-dg}),
we then obtain 
Claim~\ref{C_barmu} by applying 
Sub-claims~\ref{SC_mumubar},\ref{SC_mudelgam}(i).

\begin{proof}[Proof of Sub-claim~\ref{SC_mumubar}]
By \eqref{E_mu-dg}, for any $\gamma\in(0,1)$,
we have that 
\[
\frac{\partial^2}{\partial\delta^2}\mu_r(\delta,\gamma)
=\frac{(r-1)\beta_r}{\delta}(1-\delta^{r-1}(1-\gamma)^r)>0
\]
for all $\delta\in(0,1)$. Moreover,
by \eqref{E_mu-dg},\eqref{E_barmu-dg}, 
the above expression, 
and noting that 
\[
\frac{\partial^2}{\partial\delta^2}
(1-\delta)\log\left(\frac{e\gamma\delta^r}{1-\delta}\right)
=
-\frac{r-(r-1)\delta^2}{\delta^2(1-\delta)},
\]
it follows that, for any $\gamma\in(0,1)$, 
\begin{align*}
\frac{\partial^2}{\partial\delta^2}\bar\mu_r(\delta,\gamma)
&=-\frac{\beta_r}{\delta^2(1-\delta)}
\left(
r-(r-1)\delta^2-\delta(1-\delta)(1-\delta^{r-1}(1-\gamma)^r)
\right)\\
&=-\frac{\beta_r}{\delta^2(1-\delta)}
\left(1+(r-1)(1-\delta)(1+\delta^r(1-\gamma)^r)\right)\\
&<0
\end{align*}
for all $\delta\in(0,1)$.
The claim follows. 
\end{proof}

\begin{proof}[Proof of Sub-claim~\ref{SC_mudelgam}]
Since, by \eqref{E_mu-dg},
\[
\mu_r(1,\gamma)
=
r-\beta_r\left(\frac{(1-\gamma)^r}{r}+r-2+\gamma\right)
\]
claim (i) follows immediately 
by the inequality $(1-x)^y\ge1-xy$.

Next, we note that, by \eqref{E_mu-dg} 
and the bound $\log{x}<(x^2-1)/(2x)$, 
\[
\mu_r^*-\mu_r(\delta,\gamma)
>\beta_r\left(
\frac{\delta^r}{r}(1-\gamma)^r+\delta(r-2+\gamma)+\frac{r-1}{2}(1-\delta^2)
-\frac{(r-1)^2}{r}
\right).
\]
Hence, 
to establish claim (ii), 
if suffices to verify that $f_r(\delta_\gamma,\gamma)>(r-1)^2/r$,
where
\[
f_r(\delta,\gamma)
=\frac{\delta^r}{r}(1-\gamma)^r+\delta(r-2+\gamma)+\frac{r-1}{2}(1-\delta^2).
\]
The case $r=2$ is straightforward, since in this case 
\[
f_2(\delta_\gamma,\gamma)
=
f_2(1-\sqrt{\gamma/2},\gamma)
=
\frac{1}{2}\left(1+\sqrt{2}\gamma^{3/2}(1-\gamma)+\frac{\gamma^3}{2}\right)
>\frac{1}{2}
\]
for all $\gamma\in(0,1)$. 
For the remaining cases $r>2$, we show that 
$f_r(\delta_\gamma,\gamma)$ is increasing 
in $\gamma$. 
Since 
$\lim_{\gamma\to0^+}f_r(\delta_\gamma,\gamma)=(r-1)^2/r$ 
this implies the claim.  
To this end, we note that 
\[
\frac{\partial}{\partial\delta}f_r(\delta,\gamma)
=\delta^{r-1}(1-\gamma)^r+r-2+\gamma-(r-1)\delta,
\]
\[
\frac{\partial}{\partial\gamma}\delta_\gamma
=-\frac{1}{2\sqrt{\gamma r}},
\]
(recalling that $\delta_\gamma=1-\sqrt{\gamma/r}$) and
\[
\frac{\partial}{\partial\gamma}f_r(\delta,\gamma)
=\delta-\delta^r(1-\gamma)^{r-1}.
\]
Hence, differentiating $f_r(\delta_\gamma,\gamma)$ with
respect to $\gamma$, we obtain 
\begin{equation}\label{E_dgam:fr}
\delta_\gamma
-\delta_\gamma^{r-1}(1-\gamma)^{r-1}
\left(\frac{1-\gamma}{2\sqrt{\gamma r}}+\delta_\gamma
\right)
-\frac{r-2+\gamma-(r-1)\delta_\gamma}{2\sqrt{\gamma r}}.
\end{equation}
By the inequality $(1-x)^y<1/(1+xy)$, 
\[
\delta_\gamma^{r-1}(1-\gamma)^{r-1}
<
\frac{1}{(1+(r-1)\gamma)(\delta_\gamma+\sqrt{\gamma r})}.
\]
Therefore the expression at \eqref{E_dgam:fr} multiplied 
by 
\[
2\sqrt{\gamma r}
(1+(r-1)\gamma)
(\delta_\gamma+\sqrt{\gamma r})>0
\]
is bounded from above by 
\[
(1+(r-1)\gamma)
(\delta_\gamma+\sqrt{\gamma r})
(1-\gamma+2\sqrt{\gamma r}\delta_\gamma
-(r-1)(1-\delta_\gamma))
-(1-\gamma+2\sqrt{\gamma r}\delta_\gamma).
\]
which after some straightforward manipulations
reduces to 
\[
\frac{(r-1)\gamma}{r}
\left(\sqrt{\gamma r}(2r-3((r-1)\gamma+1))
+(r^2-3r-1)\gamma +2r +1
\right).
\]

Put 
\[
F_r(\gamma)=\sqrt{\gamma r}(3\gamma+2r-3-3r\gamma)
+r^2\gamma-3r\gamma+2r+1-\gamma.
\]
We claim that $F_r(\gamma)>0$ for all $r>2$ and
$\gamma\in(0,1)$. Note that this implies that 
the expression at \eqref{E_dgam:fr} is positive
for all such $r,\gamma$, and hence that 
$f_r(\delta_\gamma,\gamma)$ is increasing
in $\gamma$ for any such $r$, as desired.
To see this, 
we observe that 
\[
\frac{\partial^2}{\partial\gamma^2}F_r(\gamma)
=-\frac{1}{4}\sqrt{\frac{r}{\gamma^3}}(2r-3+9(r-1)\gamma)<0,
\]
$\lim_{\gamma\to0^+} F_r(\gamma)=2r+1>0$, and 
$\lim_{\gamma\to1^-} F_r(\gamma)=r(r-\sqrt{r}-1)>0$
for all $r>2$ and $\gamma\in(0,1)$. 
Altogether, we have established claim (ii) for all $r\ge2$.

Finally, for claim (iii), let 
$g_r(\delta,\gamma)=e\gamma\delta^r/(1-\delta)$. 
In this notation, claim (iii) states that 
$g_r(\delta_\gamma,\gamma)<1$. 
To verify this inequality, we note that 
\[
\frac{\partial}{\partial\delta}
g_r(\delta,\gamma)
=
\frac{e\gamma\delta^{r-1}}{(1-\delta)^2}(r-(r-1)\delta)
\]
and hence
\[
\frac{\partial}{\partial\delta}
g_r(\delta_\gamma,\gamma)
=
e\delta_\gamma^{r-1}(r+(r-1)\sqrt{\gamma r}).
\]
Therefore, noting that
\[
\frac{\partial}{\partial\gamma}
g_r(\delta,\gamma)\mid_{\delta=\delta_\gamma}
=
\frac{e\delta_\gamma^r}{1-\delta_\gamma}
=e\delta_\gamma^{r-1}\left(\sqrt{\frac{r}{\gamma}}-1\right)
\]
and recalling that
\[
\frac{\partial}{\partial\gamma}\delta_\gamma
=-\frac{1}{2\sqrt{\gamma r}}
\]
it follows that 
\[
\frac{\partial}{\partial\gamma}
g_r(\delta_\gamma,\gamma)
=
\frac{e\delta_\gamma^{r-1}}{2}
\left(\sqrt{\frac{r}{\gamma}}
-(r+1)
\right).
\]
Therefore, for any $r\ge2$, $g_r(\delta_\gamma,\gamma)$
is maximized at $\gamma=r/(r+1)^2$. 
By the inequality $(1-x/n)^n<e^{-x}$, we find that 
\[
g_r(r/(r+1)^2)
=
\frac{er}{r+1}\left(1-\frac{1}{r+1}\right)^r<
\frac{r}{r+1}\left(1-\frac{1}{r+1}\right)^{-1}
=1
\] 
giving the claim. 
\end{proof}

As discussed,  
Sub-claims~\ref{SC_mumubar},\ref{SC_mudelgam} imply 
Claim~\ref{C_barmu}. 
\end{proof}

To conclude, we recall that Claim~\ref{C_barmu} implies
Claim~\ref{C:Epmax}.
\end{proof}

\subsection{Proof of Claim~\ref{Cl_hat:rhoind}} 
\label{A_Cl_hat:rhoind}

\begin{proof}[Proof of Claim~\ref{Cl_hat:rhoind}]
We recall the relevant quantities defined in the proof 
of Lemma~\ref{L_hat:mkiLB}, 
see \eqref{E_hat:rho},\eqref{E_hat:rhoLB-smalli},\eqref{E_hrk}.
We have that 
\[
\hat \rho_r(k,i)\ge 
\xi' e^{-(r-2)k-h_r(k)}\quad \text{for $k>k_r$ and $i\le k/\log^2{n}$}
\]
where 
\[
h_r(k)=f_r(k)
-\log g_r\left(k,\frac{k}{\log^2{k}}\right)
+\frac{k}{\log^2{k}}\log\left(\frac{k}{\log^2{k}}\right),
\]
$f_r(k)$ is non-decreasing and $f_r(k)\ll k$, and $g_r(k,i)=e^{(r-2)i}\left(\frac{k-i}{k}\right)^{(r-2)k}$.
Claim~\ref{Cl_hat:rhoind} states that 
for some $\xi>0$,  for all large $k$ and   
$i\le (\eps/r)^2k$,  we have that 
$\hat \rho_r(k,i) \ge \xi e^{-i\eps-(r-2)k-h_r(k)}$. 

\begin{subclaim}\label{SC_hrk}
For all $k>k_r$, we have that 
$h_r(k)$ is increasing
in $k$. 
\end{subclaim}

\begin{proof}
Since $f_r(k)$ is non-decreasing 
and $k/\log^2k$ is increasing, it suffices by 
\eqref{E_hrk} to show that 
$g_r(k,k/\log^2k)$ is decreasing for $k>k_r$
(and assuming $r>2$, as else $g_r\equiv1$
and so there is nothing to prove). 
To this end, we note that
\[
\frac{\partial}{\partial i}g_r(k,i)
=-\frac{(r-2)i}{k-i}g_r(k,i),
\]
\[
\frac{\partial}{\partial k}\frac{k}{\log^2k}
=\frac{\log{k}-2}{\log^3k},
\]
and
\[
\frac{\partial}{\partial k}g_r(k,i)
=
\frac{r-2}{k-i}\left((k-i)\log\left(\frac{k-i}{k}\right)+i\right)g_r(k,i)
\]
Hence, differentiating $g_r(k,k/\log^2k)$ with respect to 
$k$, and
dividing by 
\[
-\frac{(r-2)k}{k(1-\log^{-2}k)\log^3k}g_r(k,k/\log^2k)
<0
\]
we obtain
\[
(\log^3{k})(1-\log^{-2}{k})\log\left(\frac{\log^2{k}}{\log^2{k}-1}\right)
-\frac{\log^3{k}-\log{k}+2}{\log^2{k}}.
\]
By the inequality $\log{x}>2(x-1)/(x+1)$ (valid
for $x>1$), the above expression is 
bounded from below by 
\[
\frac{\log^3{k}-4\log^2{k}-\log{k}+2}
{(\log^2k)(2\log^2{k}-1)}
>
\frac{\log{k}-5}
{2\log^2{k}-1}>0
\]
for all $k>k_r$, since
$k_r>e^{r/\eps}>e^{r(r+1)}$ and $r>2$.
The claim follows. 
\end{proof}

By Sub-claim~\ref{SC_hrk}, fix some $k_*=k_*(r,\eps)>k_r$ so that 
$k/\log^2k$ is larger than $9(r/\eps)^4$ and $(r+2)!/(1-\eps)$ 
for all $k\ge k_*$,
and $h_r(k)$ is increasing for all 
$k\ge (1-(\eps/r)^2)k_*$. 
By 
\eqref{E_hat:rhoLB-smalli}, 
select some $\xi(r,\eps)\le \xi'$ so that 
the claim holds for all $k>k_r$ and relevant $i$, provided either 
$i\le k/\log^2k$ or $k\le k_*$. 
 
We establish the remaining cases, 
$k>k_*$ and $k/\log^2k<i\le (\eps/r)^2$, 
by induction. To this end, let $k>k_*$ be given, 
and assume that the claim
holds for all $k'<k$ and relevant $i$. 
By \eqref{E_hat:mrec} it follows that 
\begin{equation}\label{E_hat:rhorec}
\hat \rho_r(k,i)
\ge
\sum_{j=1}^{k-r-i} 
\hat B_r(k,i,j) 
\hat \rho_r(k-i,j)
\quad (i< k-r)
\end{equation}
where 
\[
\hat B_r(k,i,j) 
=\frac{j^i}{i!}
\left(\frac{k-i}{k}\right)^{(r-2)k}
\left(\frac{k-i-j}{k-i}\right)^{k-i}
\left(\frac{(r-1)!}{(k-i)^{r-1}}\frac{\hat a_r(k-i,j)}{j}\right)^i.
\]

\begin{subclaim}\label{SC_hat:a}
For all $(r+2)!\le i,j\le k/r^2$, we have that  
\[
\hat B_r(k,i,j) \ge
\frac{j^i}{i!}\left(\frac{k-i}{k}\right)^{(r-2)k}
\left(\frac{k-i-j}{k-i}\right)^{k+(r-2)i}.
\]
\end{subclaim}

\begin{proof}
By the formula for $\hat B_r(k,i,j)$ above, it suffices to 
show that 
\[
\frac{(r-1)!}{(k-i)^{r-1}}\frac{\hat a_r(k-i,j)}{j}
>
\left(\frac{k-i-j}{k-i}\right)^{r-1}.
\]
To this end, we note that by \eqref{E_hat:a} and Claim~\ref{C_a}
the left hand side is bounded from below by   
\[
\frac{1}{j}\sum_{\ell=1}^{j}\left(\frac{k-i-\ell}{k-i}\right)^{r-1}
-\frac{2r!}{k-i}
\]
Since, for any integer $m$, 
$(1-y/x)^m-(1-(y+1/2)/x)^m$ is 
decreasing in $y$, for $y<x$,  
it follows that  
\[
\frac{1}{j}\sum_{\ell=1}^{j}\left(\frac{k-i-\ell}{k-i}\right)^{r-1}
\ge
\left(\frac{k-i-(j+1)/2}{k-i}\right)^{r-1}.
\]
Thus, applying the inequalities  
$1-xy\le (1-x)^y\le1/(1+xy)$, we find that  
\[
\frac{(r-1)!}{(k-i)^{r-1}}\frac{\hat a_r(k-i,j)}{j}
-
\left(\frac{k-i-j}{k-i}\right)^{r-1}
\]
is bounded from below by
\[
1-\frac{(j+1)(r-1)}{2(k-i)}-\frac{2r!}{k-i}
-\frac{1}{1+j(r-1)/(k-i)}
\]
which equals
\[
\frac{((r-1)j-(r+4r!-1))(k-i)-((r-1)j+(r+4r!-1))(r-1)j}
{2(k-i)(k-i+(r-1)j)}.
\]
It thus remains to show that the numerator
in the above expression is non-negative, for
all $i,j$ as in the claim. 
To see this, we  
observe that $r+4r!-1<(r-1)(r+2)!$ for all $r\ge2$. 
Hence, 
for $(r+2)!\le i,j\le k/r^2$ and $r\ge2$, 
the numerator 
divided by $(r-1)k>0$
is bounded from below by  
\[
(j-(r+2)!)\left(1-\frac{1}{r^2}\right)
-(j+(r+2)!)\frac{1}{r^2}
=
\left(1-\frac{2}{r^2}\right)
(j-(r+2)!)\ge0
\]
as required. The claim follows. 
\end{proof}

Applying Sub-claim~\ref{SC_hat:a}, the inductive hypothesis, 
and the bound $i!<3\sqrt{i}(i/e)^i$ to \eqref{E_hat:rhorec},  
it follows that 

\begin{equation}\label{E_hat:rhoind}
\hat \rho_r(k,i)
>
\xi\frac{e^{-(r-2)k+(r-1)i-h_r(k-i)}}{3\sqrt{i}}
\left(\frac{k-i}{k}\right)^{(r-2)k}
\sum_{j\in{\cal J}_{r,\eps}}
\psi_{r,\eps}(i/k,j/i)^k
\end{equation}
where 
${\cal J}_{r,\eps}(k,i)$ 
is the set of $j$ satisfying 
$(r+2)!\le j\le (\eps/r)^2(k-i)$, and 
\[
\psi_{r,\eps}(\gamma,\delta)
=
\delta^\gamma
e^{-\delta\gamma\eps}
\left(1-\frac{\delta\gamma}{1-\gamma}\right)^{1+\gamma(r-2)}.
\]
 
\begin{subclaim}\label{SC_dels}
Put $\delta_\eps=1-\eps$ and 
$\delta_{r,\eps}=\delta_\eps+(\eps/r)^2$.
For any fixed $\gamma\le (\eps/r)^2$, we have that 
$\psi_{r,\eps}(\gamma,\delta)$ is increasing in $\delta$,
for $\delta\in[\delta_\eps,\delta_{r,\eps}]$. 
\end{subclaim}

\begin{proof}
Differentiating $\psi_{r,\eps}(\gamma,\delta)$ with respect to $\delta$, we obtain
\[
\frac{\psi_{r,\eps}(\gamma,\delta)\gamma}
{\delta(1-\gamma-\delta\gamma)}
\left(
\eps\gamma\delta^2
-(1+\eps+\gamma(r-1-\eps))\delta
+1-\gamma
\right).
\]
Hence, to establish the claim, it suffices to 
show that 
\[
\eps\gamma\delta_{r,\eps}^2
-(1+\eps+\gamma(r-1-\eps))\delta_{r,\eps}
+1-\gamma
\]
is positive for relevant $\gamma$. 
Moreover, since the above expression is decreasing in 
$\gamma$, we need only verify the 
case $\gamma=(\eps/r)^2$. Setting $\gamma$
as such in the above expression, and then dividing 
by $\eps^2/r^6$, we obtain 
\[
r^6
-(1-\eps)r^5
-(1+3\eps^2-\eps^3)r^4
-r^3\eps^2
+\eps^2(1+3\eps-2\eps^2)r^2
+\eps^5.
\]
For $\eps<1/r$ and $r\ge2$, this expression is bounded 
from below by
\[
r(r^5-r^4-(1+3/r^2)r^3-1)\ge r>0
\]
as required, giving the claim.
\end{proof}

By the choice of $k_*$ and since $k>k_*$, 
for all relevant $k/\log^2k\le i\le (\eps/r)^2k$, we have that 
$\delta_\eps i\ge(r+2)!$ and 
\[
\frac{\delta_{r,\eps} i}{k-i}
\le
(\eps/r)^2\frac{1-\eps+(\eps/r)^2}{1-(\eps/r)^2}
\le
(\eps/r)^2
\]
where the second inequality follows since
\[
\frac{\partial}{\partial\eps}\frac{1-\eps+(\eps/r)^2}{1-(\eps/r)^2}
=-r^2\frac{(r^2+\eps^2-4\eps)}{(r-\eps)^2(r+\eps)^2}<0
\]
for all $r\ge2$. Hence, for all such $i,k$, 
we have that 
$j\in{\cal J}_{r,\eps}(k,i)$ for all $j\in[\delta_\eps,\delta_{r,\eps}]$.
Therefore, for any such $i,k$, by \eqref{E_hat:rhoind}
and Sub-claim~\ref{SC_dels}, we have that 
\begin{align*}
\hat \rho_r(k,i)
&>
\xi
\frac{e^{-(r-2)k+(r-1)i-h_r(k-i)}}{3\sqrt{i}}
\left(\frac{k-i}{k}\right)^{(r-2)k}
\sum_{\delta_\eps i\le j\le \delta_{r,\eps}i}\psi_{r,\eps}(i/k,j/i)^k\\
&>
\xi
\frac{(\delta_{r,\eps}-\delta_\eps)\sqrt{i}}{3}
e^{-(r-2)k+(r-1)i-h_r(k-i)}
\left(\frac{k-i}{k}\right)^{(r-2)k}
\psi_{r,\eps}(i/k,\delta_\eps)^k\\
&>
\xi
e^{-(r-2)k+(r-1)i-h_r(k-i)}
\left(\frac{k-i}{k}\right)^{(r-2)k}
\psi_{r,\eps}(i/k,\delta_\eps)^k
\end{align*}
where the last inequality follows since
for any such $i,k$, by the choice of $k_*$ and since $k>k_*$, 
we have that 
$\delta_{r,\eps}-\delta_\eps=(\eps/r)^2>3/\sqrt{i}$.  

\begin{subclaim}\label{SC_zeta}
Fix $k/\log^2k\le i\le (\eps/r)^2k$, and define 
$\zeta_r(k,i)$ such that 
\[
\hat\rho_r(k,i)=\xi e^{-\zeta_r(k,i)\eps i-(r-2)k-h_r(k)}.
\] 
We have that $\zeta_r(k,i)<1$. 
\end{subclaim}

\begin{proof} 
Letting $\gamma=i/k$, it follows by the bound for $\hat \rho_r(k,i)$ 
above,
and since $k>k_*$ and hence 
$h_r(k-i)<h_r(k)$ by the choice of $k_*$, 
that $\zeta_r(k,i)$ is bounded from above by 
\[
\delta_\eps
-\frac{r-1}{\eps}
-\frac{r-2}{\eps\gamma}\log(1-\gamma)
-\frac{1}{\eps}\log{\delta_\eps}
-\frac{1+\gamma(r-2)}{\eps\gamma}
\log\left(1-\frac{\delta_\eps\gamma}{1-\gamma}\right).
\]

Recall that $\delta_\eps=1-\eps$.
Applying the bound $-\log(1-x)\le x/(1-x)$ for $x=\gamma$ and 
$x=\delta_\eps\gamma/(1-\gamma)$,
and the bound $-\log(1-x)\le x+(1+x)x^2/2$  
for $x=\eps$ (valid for any $x<1/3$,
and so for all relevant $\eps<1/(r+1)$ with $r\ge2$), 
we find that 
the expression above is bounded from above by 
\[
\nu(\eps,\gamma) 
=
2-\frac{\eps(1-\eps)}{2}
-\frac{1-(r-1)\gamma}{\eps(1-\gamma)}
+\frac{(1-\eps)(1+(r-2)\gamma)}{\eps(1-(2-\eps)\gamma)}.
\]

Therefore, noting that
\[
\frac{\partial}{\partial\gamma}\nu(\eps,\gamma) 
=
\frac{r-2}{\eps(1-\gamma)^2}
+\frac{(1-\eps)(r-\eps)}{\eps(1-(2-\eps)\gamma)^2}
>0,
\]
to establish the subclaim, it suffices to verify that 
$\nu(\eps,(\eps/r)^2)<1$ for all $r\ge2$ and $\eps<1/(r+1)$. 
Furthermore, 
since
\[
\nu(\eps,(\eps/r)^2) 
=
2-\frac{\eps(1-\eps)}{2}
-\frac{r^2-\eps^2(r-1)}{\eps(r^2-\eps^2)}
+\frac{(1-\eps)(r^2+\eps^2(r-2))}{\eps(r^2-2\eps^2+\eps^3)}
\]
and hence
\[
\frac{\partial}{\partial r}\nu(\eps,(\eps/r)^2) 
=
-\frac{\eps(r(r-4)+\eps^2)}{(r^2-\eps^2)^2}
-\frac{\eps(1-\eps)(r(r-2\eps)+\eps^2(2-\eps))}
{(r^2-2\eps^2+\eps^3)^2}<0
\]
for all $k\ge4$ and $\eps<1$, we need only verify 
the cases $r\le4$. 

To this end, let $\eta(r,\eps)$ denote the 
difference of the numerator and denominator 
of $\nu(\eps,(\eps/r)^2)$ (in its factorized form), namely
\begin{multline*}
-\eps^7
+3\eps^6
+(r^2-4)\eps^5
-2(2r^2-2r+1)\eps^4
+(5r^2-6r+8)\eps^3\\
+r^2(r^2-2r-2)\eps^2
-r^2(r-2)^2\eps.
\end{multline*} 
For all $\eps<1/3$, we have that 
\[
\eta(2,\eps)
=-\eps^2(1-\eps)(2-\eps)(2+\eps)(2-2\eps+\eps^2)
<-\eps^2
<0.
\]
Similarly, 
\[
\eta(3,\eps)
=
-\eps(9-9\eps-35\eps^2+26\eps^3-5\eps^4-3\eps^5+\eps^6)
<-\eps
<0
\]
and
\[
\eta(4,\eps)
=
-\eps(64-96\eps-64\eps^2+50\eps^3-12\eps^4-3\eps^5+\eps^6)
<
-\eps
<0.
\]

It follows that $\nu(\eps,(\eps/r)^2)<1$ 
for all $\eps<1/3$ and $k\le4$,
and hence for all $k\ge2$, giving the subclaim.
\end{proof}
 
By Sub-claim~\ref{SC_zeta}, we find that 
$\hat\rho_r(k,i)=\xi e^{-\eps i-(r-2)k-h_r(k)}$ 
for all $i,k$ such that 
$k/\log^2k \le i\le (\eps/r)^2k$, 
completing the  induction, and thus giving 
Claim~\ref{Cl_hat:rhoind}.
\end{proof}

\subsection{Proof of Lemma~\ref{L_mueps}} 
\label{A_L_mueps}

\begin{proof}[Proof of Lemma~\ref{L_mueps}]
Put $\alpha_{r,\eps}=(1+\eps)\alpha_r$. 
Let $\beta_{r}=\beta_{r}(\alpha_{r,\eps})$ 
and $\beta_*=\beta_*(\alpha_{r,\eps})$. 
For $\beta>0$ and $\gamma\in[0,1)$, let 
$\mu_{r,\eps}(\beta,\gamma)
=\mu_{r,\eps}(\alpha_{r,\eps},\beta,\gamma)$
and $\mu_r^*(\beta)=\mu_r^*(\alpha_{r,\eps},\beta)$.
Let 
$\gamma_{r,\eps}^*(\beta)$
denote the maximizer of 
$\mu_{r,\eps}(\beta,\gamma)$ over $\gamma\in[0,1)$,
which is well-defined, since for all $\gamma\in(0,1)$, 
\begin{equation}\label{E_mureps:cc}
\frac{\partial^2}{\partial\gamma^2}\mu_{r,\eps}(\beta,\gamma)
-\frac{\beta}{(1-\gamma)^2}
-\frac{\alpha_{r,\eps}\beta^r}{(r-2)!}(1-\gamma)^{r-2}
<0
\end{equation}
and 
$\lim_{\gamma\to1^-}\mu_{r,\eps}(\beta,\gamma)=-\infty$.
Finally, put 
$\gamma_{r,\eps}(\beta)
=
\min\{\gamma_{r,\eps}^*(\beta),(\eps/r)^2\}$. 

We show that 
$\mu_{r,\eps}(\beta,\gamma_{r,\eps}(\beta))$
is bounded away from 0 for $\beta\in[\beta_r^*,\beta_r^*+\delta]$, 
for some $\delta>0$. 
By Lemma~\ref{L_hat:EkiLB}, the result follows. 

\begin{claim}\label{Cl_gam*}
For $\gamma\in(0,1)$, let
\[
\beta_{r,\eps}(\gamma)
=\frac{(1/(1-\gamma)+\eps)^{1/(r-1)}}{1-\gamma}\beta_r
\]
and put 
\[
\beta_{r,\eps}=
\lim_{\gamma\to0^+}\beta_{r,\eps}(\gamma)=
(1+\eps)^{1/(r-1)}\beta_r.
\]
We have that 
\begin{enumerate}[nolistsep,label={(\roman*)}]
\item $\gamma_{r,\eps}^*(\beta)=0$, for all $\beta\le\beta_{r,\eps}$, 
\item for $\beta>\beta_{r,\eps}$, $\gamma=\gamma_{r,\eps}^*(\beta)$
if and only if $\beta=\beta_{r,\eps}(\gamma)$, and 
\item  $\gamma_{r,\eps}^*(\beta)$ is increasing in $\beta$, 
for $\beta\ge \beta_{r,\eps}$. 
\end{enumerate}
\end{claim}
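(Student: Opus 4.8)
The plan is to reduce everything to the first-order condition $\partial_\gamma\mu_{r,\eps}(\beta,\gamma)=0$, using the concavity in $\gamma$ already recorded in \eqref{E_mureps:cc}. First I would differentiate the formula for $\mu_{r,\eps}$ from Lemma~\ref{L_hat:EkiLB} to get
\[
\frac{\partial}{\partial\gamma}\mu_{r,\eps}(\alpha,\beta,\gamma)
=-\frac{\beta}{1-\gamma}+\frac{\alpha\beta^{r}}{(r-1)!}(1-\gamma)^{r-1}-\beta\eps ,
\]
and divide by $\beta>0$. Substituting $\alpha=\alpha_{r,\eps}$ and using $\alpha_{r,\eps}\beta_r^{r-1}/(r-1)!=1$ turns the stationarity equation into $(\beta/\beta_r)^{r-1}(1-\gamma)^{r-1}=1/(1-\gamma)+\eps$, which solved for $\beta$ is exactly $\beta=\beta_{r,\eps}(\gamma)$. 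By \eqref{E_mureps:cc} the map $\gamma\mapsto\mu_{r,\eps}(\beta,\gamma)$ is strictly concave on $(0,1)$ and tends to $-\infty$ as $\gamma\to1^-$, so $\partial_\gamma\mu_{r,\eps}$ is strictly decreasing in $\gamma$; hence for each $\beta$ there is at most one interior stationary point, and when it exists it is the unique maximizer over $[0,1)$.

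For (i), evaluating at $\gamma=0$ gives $\beta^{-1}\partial_\gamma\mu_{r,\eps}|_{\gamma=0}=(\beta/\beta_r)^{r-1}-1-\eps$, which is $\le0$ exactly when $\beta\le\beta_r(1+\eps)^{1/(r-1)}=\beta_{r,\eps}$; by concavity $\mu_{r,\eps}(\beta,\cdot)$ is then nonincreasing on $[0,1)$, so the maximizer is $\gamma=0$. For (ii), when $\beta>\beta_{r,\eps}$ the derivative at $\gamma=0$ is strictly positive, while $\beta_{r,\eps}(\gamma)$ is continuous with $\beta_{r,\eps}(\gamma)\to\infty$ as $\gamma\to1^-$ (since $1/(1-\gamma)\to\infty$), so there is a $\gamma\in(0,1)$ with $\beta=\beta_{r,\eps}(\gamma)$; strict concavity makes it the unique stationary point, hence $\gamma=\gamma^*_{r,\eps}(\beta)$, and the converse is immediate. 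This also exhibits $\gamma^*_{r,\eps}$ on $[\beta_{r,\eps},\infty)$ as the inverse of $\gamma\mapsto\beta_{r,\eps}(\gamma)$ on $(0,1)$, with $\gamma^*_{r,\eps}(\beta_{r,\eps})=0$ matching continuity at the left endpoint.

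For (iii) it then suffices to check that $\beta_{r,\eps}(\gamma)$ is strictly increasing on $(0,1)$, since the inverse of a strictly increasing bijection is strictly increasing. Writing $u=1-\gamma\in(0,1)$ we have $\beta_{r,\eps}(\gamma)=\beta_r\,(1/u+\eps)^{1/(r-1)}/u$, and a one-line computation shows both $(1/u+\eps)^{1/(r-1)}$ and $1/u$ are strictly decreasing in $u$, so their product is, giving monotonicity in $\gamma$. I do not anticipate any real difficulty; the only points needing a little care are the boundary behaviour of $\beta_{r,\eps}(\gamma)$ — namely $\beta_{r,\eps}(\gamma)\to\beta_{r,\eps}$ as $\gamma\to0^+$ and $\to\infty$ as $\gamma\to1^-$, which fixes the domain of the inverse — and the appeal to \eqref{E_mureps:cc} to promote "stationary point" to "unique maximizer" throughout.
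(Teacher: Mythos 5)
Your proposal is correct and follows essentially the same route as the paper: compute $\partial_\gamma\mu_{r,\eps}$, use the normalization $\alpha_{r,\eps}\beta_r^{r-1}/(r-1)!=1$ to rewrite the stationarity condition as $\beta=\beta_{r,\eps}(\gamma)$, and invoke the concavity \eqref{E_mureps:cc} to identify the maximizer, with (iii) following from the monotonicity of $\beta_{r,\eps}(\gamma)$. The paper's own proof is just a terser version of the same argument, so your write-up simply supplies the details it leaves implicit.
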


\begin{proof}
By \eqref{E_mureps:cc},
we have that $\mu_{r,\eps}(\beta,\gamma)$ is concave in $\gamma$. 
Therefore, since 
\[
\frac{\partial}{\partial\gamma}\mu_{r,\eps}(\beta,\gamma)
-\beta\left(
\frac{1}{1-\gamma}+\eps-\frac{\alpha_{r,\eps}\beta^{r-1}}{(r-1)!}(1-\gamma)^{r-1}
\right)
\]
and hence, for any $\xi>0$, 
\[
\frac{\partial}{\partial\gamma}\mu_{r,\eps}(\xi\beta_r,\gamma)
=
-\xi\beta_r\left(
\frac{1}{1-\gamma}+\eps-\xi^{r-1}(1-\gamma)^{r-1}
\right), 
\]
the first two claims follow. 
The third claim is a consequence of the second claim and 
the fact that 
$\beta_{r,\eps}(\gamma)$ is increasing in $\gamma$.  
\end{proof}

By the following claims, we obtain the lemma
(as we discuss below the statements).

\begin{claim}\label{Cl_omega}
 For $\beta>0$ and $\gamma\in[0,1)$, let 
\[
\omega_{r,\eps}(\beta,\gamma)
=
\mu_{r,\eps}(\beta,\gamma)
-
\mu_{r}^*(\beta).
\]
We have that 
\begin{enumerate}[nolistsep,label={(\roman*)}]
\item $\omega_{r,\eps}(\beta,\gamma_{r,\eps}(\beta))=0$, 
for all $\beta\le\beta_{r,\eps}$, and
\item $\omega_{r,\eps}(\beta,\gamma_{r,\eps}(\beta))$ 
is increasing in $\beta$, for 
$\beta\ge\beta_{r,\eps}$.
\end{enumerate}
\end{claim}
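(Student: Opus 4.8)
\emph{Proof proposal for Claim~\ref{Cl_omega}.}
The plan is to first record a closed form for $\omega_{r,\eps}$. Subtracting \eqref{E_mu*} (with $\alpha=\alpha_{r,\eps}$) from the formula for $\mu_\eps$ in Lemma~\ref{L_hat:EkiLB}, the $r$ and $\beta(r-2)$ terms cancel, the two logarithms combine to $\beta\log(1-\gamma)$, and a short computation gives
\[
  \omega_{r,\eps}(\beta,\gamma)
  =\beta\log(1-\gamma)
  +\frac{\alpha_{r,\eps}\beta^r}{r!}\bigl(1-(1-\gamma)^r\bigr)
  -\beta\eps\gamma .
\]
Claim~(i) is then immediate: for $\beta\le\beta_{r,\eps}$ we have $\gamma_{r,\eps}^*(\beta)=0$ by Claim~\ref{Cl_gam*}(i), hence $\gamma_{r,\eps}(\beta)=0$, and setting $\gamma=0$ above gives $\omega_{r,\eps}(\beta,0)=0$.

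For claim~(ii) I would split the range $\beta\ge\beta_{r,\eps}$ according to whether the cap $(\eps/r)^2$ in the definition of $\gamma_{r,\eps}$ is active. By Claim~\ref{Cl_gam*}(ii),(iii) there is a unique $\beta^\dagger=\beta_{r,\eps}((\eps/r)^2)\ge\beta_{r,\eps}$ with $\gamma_{r,\eps}^*(\beta^\dagger)=(\eps/r)^2$; for $\beta\in[\beta_{r,\eps},\beta^\dagger]$ one has $\gamma_{r,\eps}(\beta)=\gamma_{r,\eps}^*(\beta)\in[0,(\eps/r)^2]$, and for $\beta\ge\beta^\dagger$ one has $\gamma_{r,\eps}(\beta)=(\eps/r)^2$. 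On the first interval $\gamma_{r,\eps}^*(\beta)$ is an interior critical point of the strictly concave (by \eqref{E_mureps:cc}) function $\mu_{r,\eps}(\beta,\cdot)$, and is $C^1$ in $\beta$ by the implicit function theorem; hence the envelope principle gives $\tfrac{d}{d\beta}\omega_{r,\eps}(\beta,\gamma_{r,\eps}(\beta))=\partial_\beta\mu_{r,\eps}(\beta,\gamma)-\partial_\beta\mu_r^*(\beta)$ at $\gamma=\gamma_{r,\eps}^*(\beta)$. Using the formula for $\partial_\beta\mu_r^*$ from the proof of Lemma~\ref{L_beta*} and the analogous expression for $\partial_\beta\mu_{r,\eps}$, this difference equals $\log(1-\gamma)+\tfrac{\alpha_{r,\eps}\beta^{r-1}}{(r-1)!}(1-(1-\gamma)^r)-\eps\gamma$, and substituting the first order condition $\tfrac{\alpha_{r,\eps}\beta^{r-1}}{(r-1)!}=(1-\gamma)^{-r}+\eps(1-\gamma)^{1-r}$ (read off from the derivative computed in the proof of Claim~\ref{Cl_gam*}) reduces it, with $u:=1-\gamma$, to
\[
  \psi(u):=\log u+u^{-r}-1+\eps\bigl(u^{1-r}-1\bigr).
\]

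The crux is to show $\psi(u)>0$ for $u\in(0,1)$. I would write $\psi=\phi+\eps(u^{1-r}-1)$ with $\phi(u)=\log u+u^{-r}-1$: since $r\ge2$, for $u\in(0,1)$ we have $u^{1-r}>1$ so the second summand is positive, while $\phi(1)=0$ and $\phi'(u)=u^{-1}(1-ru^{-r})<0$ on $(0,1)$ (as $ru^{-r}>r\ge2>1$), so $\phi$ is strictly decreasing there, whence $\phi>0$. This gives $\tfrac{d}{d\beta}\omega_{r,\eps}(\beta,\gamma_{r,\eps}(\beta))=\psi(1-\gamma_{r,\eps}(\beta))>0$ for $\beta\in(\beta_{r,\eps},\beta^\dagger)$. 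On $(\beta^\dagger,\infty)$, where $\gamma_0:=(\eps/r)^2$ is constant, differentiating the closed form directly gives $\tfrac{d}{d\beta}\omega_{r,\eps}(\beta,\gamma_0)=\log(1-\gamma_0)-\eps\gamma_0+\tfrac{\alpha_{r,\eps}\beta^{r-1}}{(r-1)!}(1-(1-\gamma_0)^r)$, which is increasing in $\beta$; evaluating at $\beta=\beta^\dagger$, where the definition of $\beta^\dagger$ forces $\tfrac{\alpha_{r,\eps}(\beta^\dagger)^{r-1}}{(r-1)!}=(1-\gamma_0)^{-r}+\eps(1-\gamma_0)^{1-r}$, recovers exactly $\psi(1-\gamma_0)>0$, so the derivative is positive on all of $(\beta^\dagger,\infty)$. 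Since $\gamma_{r,\eps}$, and hence $\beta\mapsto\omega_{r,\eps}(\beta,\gamma_{r,\eps}(\beta))$, is continuous, positivity of the derivative on the two open pieces yields that $\omega_{r,\eps}(\beta,\gamma_{r,\eps}(\beta))$ is increasing on $[\beta_{r,\eps},\infty)$, which is claim~(ii).

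I expect the main obstacle to be bookkeeping rather than depth: carrying out the two algebraic simplifications (of $\partial_\beta\mu_{r,\eps}-\partial_\beta\mu_r^*$, and then of its value under the first order condition) without sign errors, invoking the envelope principle correctly, and checking that the two regimes match continuously at $\beta^\dagger$ (which also explains why the two derivative computations land on the same quantity $\psi(1-\gamma_0)$ there). Once the problem is reduced to the one-variable inequality $\psi>0$, the analysis is elementary.
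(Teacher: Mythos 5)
Your proposal is correct and follows essentially the same route as the paper: part (i) via $\mu_{r,\eps}(\beta,0)=\mu_r^*(\beta)$ and Claim~\ref{Cl_gam*}(i), and part (ii) via the envelope identity $\tfrac{d}{d\beta}\omega_{r,\eps}(\beta,\gamma^*_{r,\eps}(\beta))=\partial_\beta\omega_{r,\eps}$ followed by substitution of the first-order condition, with the range split according to whether the cap $(\eps/r)^2$ is active. The only real difference is cosmetic but in your favour: where the paper verifies positivity of the reduced derivative by chaining the bounds $(1-x)^y\le 1/(1+xy)$ and $\log(1-x)\ge -x/(1-x)$ and, in the capped regime, an explicit polynomial estimate, you reduce everything exactly to $\psi(u)=\log u+u^{-r}-1+\eps(u^{1-r}-1)>0$ on $(0,1)$, which follows from elementary monotonicity.
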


\begin{claim}\label{Cl_beta}
We have that $\beta_{r,\eps}<\beta_*$. 
\end{claim}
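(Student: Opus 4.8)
The plan is to reduce the inequality to the monotonicity of $\mu_r^*$ recorded in Lemma~\ref{L_beta*}. The first step is to notice that $\beta_{r,\eps}$ does not actually depend on $\eps$. Since $\beta_r(\alpha)=((r-1)!/\alpha)^{1/(r-1)}$ and $\alpha_{r,\eps}=(1+\eps)\alpha_r$, we have $\beta_r=\beta_r(\alpha_{r,\eps})=(1+\eps)^{-1/(r-1)}\beta_r(\alpha_r)$, and hence
\[
\beta_{r,\eps}=(1+\eps)^{1/(r-1)}\beta_r=\beta_r(\alpha_r)=\left(\frac{r}{r-1}\right)^2,
\]
using $\beta_r(\alpha_r)=(r/(r-1))^2$ from the proof of Lemma~\ref{L_beta*}.

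By Lemma~\ref{L_beta*}(i), $\mu_r^*(\alpha_{r,\eps},\cdot)$ is strictly decreasing and positive exactly on $(0,\beta_*)$, where $\beta_*=\beta_*(\alpha_{r,\eps})$ is its unique zero. Thus $\beta_{r,\eps}<\beta_*$ is equivalent to $\mu_r^*(\alpha_{r,\eps},\beta_{r,\eps})>0$, and it is this inequality that I would verify. Using \eqref{E_mu*} together with the identity $\alpha_r\beta_r(\alpha_r)^{r-1}/(r-1)!=1$, one gets $\alpha_{r,\eps}\beta_{r,\eps}^{r-1}/(r-1)!=1+\eps$ and hence $\alpha_{r,\eps}\beta_{r,\eps}^r/r!=(1+\eps)\beta_{r,\eps}/r$. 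Subtracting the identity $\mu_r^*(\alpha_r,\beta_r(\alpha_r))=0$ (which collects precisely the $\eps$-free terms of \eqref{E_mu*} at this point) cancels everything not involving $\eps$, leaving
\[
\mu_r^*(\alpha_{r,\eps},\beta_{r,\eps})=\beta_{r,\eps}\log(1+\eps)-\frac{\eps\,\beta_{r,\eps}}{r}=\frac{\beta_{r,\eps}}{r}\bigl(r\log(1+\eps)-\eps\bigr).
\]

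It then remains to check that $r\log(1+\eps)>\eps$ for all $r\ge2$ and $\eps\in(0,1/(r+1))$. Putting $g(\eps)=r\log(1+\eps)-\eps$, we have $g(0)=0$ and $g'(\eps)=r/(1+\eps)-1$, which is positive whenever $\eps<r-1$; since $1/(r+1)<r-1$ for $r\ge2$, $g$ is increasing on the admissible range and so $g(\eps)>0$ there. This gives $\mu_r^*(\alpha_{r,\eps},\beta_{r,\eps})>0$, hence $\beta_{r,\eps}<\beta_*$, as required. There is no substantive obstacle in this argument --- it is a short computation; the only points demanding care are the algebraic reduction showing that the $\eps$-free part of $\mu_r^*(\alpha_{r,\eps},\beta_{r,\eps})$ vanishes (equivalently, identifying $\beta_{r,\eps}$ with $\beta_r(\alpha_r)$ and invoking $\mu_r^*(\alpha_r,\beta_r(\alpha_r))=0$), and respecting the constraint $\eps<1/(r+1)$ in the final one-variable estimate.
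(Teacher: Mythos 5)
Your proof is correct and follows essentially the same route as the paper: both reduce the claim via Lemma~\ref{L_beta*} to showing $\mu_r^*(\alpha_{r,\eps},\beta_{r,\eps})>0$, compute that this quantity equals $\beta_{r,\eps}\bigl(\log(1+\eps)-\eps/r\bigr)$ (the paper substitutes $\xi=1+\eps$ into \eqref{E_mu*} directly, you subtract the $\eps=0$ identity — the same algebra), and then verify positivity of $\log(1+\eps)-\eps/r$. The only cosmetic difference is the final one-variable estimate, where the paper uses $\log(1+x)>x/(1+x)$ and you use a derivative argument; both are valid.
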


Indeed, the claims
together imply that 
 $\omega_{r,\eps}(\beta_*,\gamma_{r,\eps}(\beta_*))>0$.
Therefore, since $\mu_r^*(\beta_*)=0$, we thus have that 
$\mu_{r,\eps}(\beta_*,\gamma_{r,\eps}(\beta_*))>0$. 
Therefore, by the continuity of 
$\mu_{r,\eps}(\beta,\gamma_{r,\eps}(\beta))$  
in $\beta$, it follows that 
$\mu_{r,\eps}(\beta,\gamma_{r,\eps}(\beta))>0$
for all $\beta\in[\beta_*,\beta_*+\delta]$, for some $\delta>0$.
As discussed the lemma 
follows, applying Lemma~\ref{L_hat:EkiLB}.

\begin{proof}[Proof of Claim~\ref{Cl_omega}]
  The first claim follows by \eqref{E_mueps-gam0} 
  and Claim~\ref{Cl_gam*}(i).

For the second claim, we show that 
(a) $\omega_{r,\eps}(\beta,\gamma_{r,\eps}^*(\beta))$ 
is increasing in $\beta$, for 
$\beta\ge\beta_{r,\eps}$ 
such that $\gamma_{r,\eps}^*(\beta)\le (\eps/r)^2$,
and (b) $\omega_{r,\eps}(\beta,(\eps/r)^2)$ is increasing in $\beta$,
for $\beta\ge\beta_{r,\eps}$. 
By Claim~\ref{Cl_gam*}(iii), this implies the claim. 

Since $\gamma_{r,\eps}^*(\beta)$ 
maximizes $\mu_{r,\eps}(\beta,\gamma)$, and so 
$\partial\omega_{r,\eps}(\beta,\gamma_{r,\eps}^*(\beta))/\partial\gamma=0$,
it follows that  
\[
\frac{\partial}{\partial \beta}\omega_{r,\eps}(\beta,\gamma_{r,\eps}^*(\beta))
=
\frac{\partial}{\partial \beta}\omega_{r,\eps}(\beta,\gamma)
\big|_{\gamma=\gamma_{r,\eps}^*(\beta)}.
\] 
Hence, by Claim~\ref{Cl_gam*}(ii), to establish (a)
we show that  for all $\gamma\le (\eps/r)^2$,
$\partial\omega_{r,\eps}(\beta_{r,\eps}(\gamma),\gamma)/\partial \beta
>0$. 
To this end, we observe that 
\begin{equation}\label{E_omega:dbeta}
\frac{\partial}{\partial \beta}\omega_{r,\eps}(\beta,\gamma)
=
\log(1-\gamma)-\eps\gamma
+\frac{\alpha_{r,\eps}\beta^{r-1}}{(r-1)!}(1-(1-\gamma)^r). 
\end{equation}
Setting $\beta=\beta_{r,\eps}(\gamma)$, 
the above expression simplifies 
as
\[
\log(1-\gamma)-\eps\gamma
+\frac{1/(1-\gamma)+\eps}{(1-\gamma)^{r-1}}
(1-(1-\gamma)^r).
\]
By the inequalities $(1-x)^y\le 1/(1+xy)$ and 
$\log(1-x)\ge-x/(1-x)$, this expression 
is bounded from below by 
\[
-\frac{\gamma}{1-\gamma}
-\eps\gamma
+
(1+(r-1)\gamma)
\left(\frac{1}{1-\gamma}+\eps\right)
\left(1-\frac{1}{1+\gamma r}\right)
\]
which factors as 
\[
\frac{\gamma(1+\eps(1-\gamma))}{(1-\gamma)(1+\gamma r)}
(r-1+\gamma r(r-2))>0
\]
and (a) follows. 

Similarly, we note that by \eqref{E_omega:dbeta}, 
for any $\beta\ge\beta_{r,\eps}$
and $\gamma>0$, 
\begin{align*}
\frac{\partial}{\partial \beta}\omega_{r,\eps}(\beta,\gamma)
&\ge
\log(1-\gamma)-\eps\gamma
+\frac{\alpha_{r,\eps}\beta_{r,\eps}^{r-1}}{(r-1)!}(1-(1-\gamma)^r)\\
&=
\log(1-\gamma)-\eps\gamma+(1+\eps)(1-(1-\gamma)^r).
\end{align*}
Hence, using the same 
bounds for $(1-x)^y$ and $\log(1-x)$ as above,
we find that 
for all such $\beta\ge\beta_{r,\eps}$, 
$\partial\omega_{r,\eps}(\beta,(\eps/r)^2)/\partial\beta$ 
is bounded from below by 
\[
\frac{\eps^2
(r^3(r-1)(1+\eps)-2r^2\eps^2-r(2r-1)\eps^3+\eps^5)
}{(r-\eps)(r+\eps)(r+\eps^2)r^2}.
\]
For $\eps<1/r$, the numerator is bounded from below by
\[
\eps^2\left(r^3(r-1)-2-\frac{2r-1}{r^2}\right)
=
\frac{\eps^2}{r}\left(r^6-r^5-2r^2-2r+1\right)>0
\]
since $r\ge2$.
Hence $\partial\omega_{r,\eps}(\beta,(\eps/r)^2)/\partial\beta>0$, giving  (b), 
and thus completing the proof
of the second claim.
\end{proof}

\begin{proof}[Proof of Claim~\ref{Cl_beta}]
By Lemma~\ref{L_beta*}, the claim is equivalent to 
$\mu_{r}^*(\beta_{r,\eps})>0$. 
To see this, we note that 
\[
\beta_r=\left(\frac{(r-1)!}{\alpha_{r,\eps}}\right)^{1/(r-1)}
=\left(\frac{1}{1+\eps}\right)^{1/(r-1)}\left(\frac{r-1}{r}\right)^2,
\]
and hence by \eqref{E_mu*}, for any $\xi>0$, 
we have that 
\begin{align*}
\mu_{r}^*(\xi^{1/(r-1)}\beta_{r})
&=
r-
\xi^{1/(r-1)}\beta_{r}
\left(r-2+\frac{\xi}{r}-\log{\xi}\right)\\
&=
r-
\left(\frac{r}{r-1}\right)^2
\left(\frac{\xi}{1+\eps}\right)^{1/(r-1)}
\left(r-2+\frac{\xi}{r}-\log{\xi}\right).
\end{align*}
In particular,  
\[
\mu_{r}^*(\beta_{r,\eps})
=
r-
\left(\frac{r}{r-1}\right)^2
\left(r-2+\frac{1+\eps}{r}-\log(1+\eps)\right).
\]
Therefore, by the bound $\log(1+x)>x/(1+x)$, 
we find that 
\[
\mu_{r}^*(\beta_{r,\eps})
>
\frac{\eps r(r-1-\eps)}{(1+\eps)(r-1)^2}>0
\]
as required.
\end{proof}

As discussed, Lemma~\ref{L_mueps} follows by 
Claim~\ref{Cl_omega},\ref{Cl_beta}.
\end{proof}

\subsection*{Acnowledgments}

The authors would like to thank the Isaac Newton Institute for
Mathematical Sciences, Cambridge, and the organizers of the program
\emph{Random Geometry}, supported by EPSRC Grant Number EP/K032208/1,
during which progress on this project was made.  OA was further
supported by the Simons foundation and NSERC.  BK was supported by NSERC
and Killam Trusts, and attended the above program with the support of
an NSERC Michael Smith Foreign Study Supplement.

\providecommand{\bysame}{\leavevmode\hbox to3em{\hrulefill}\thinspace}
\providecommand{\MR}{\relax\ifhmode\unskip\space\fi MR }
\providecommand{\MRhref}[2]{%
  \href{http://www.ams.org/mathscinet-getitem?mr=#1}{#2}
}
\providecommand{\href}[2]{#2}

\end{document}